\newtheorem{thmintro}{Theorem}
\newtheorem{conjintro}[thmintro]{Conjecture}
\newtheorem{thm}{Theorem}[section]
\newtheorem{cor}[thm]{Corollary}
\newtheorem{lemma}[thm]{Lemma}
\newtheorem{prop}[thm]{Proposition}
\theoremstyle{remark}
\theoremstyle{definition}
\newtheorem{defn}[thm]{Definition}
\newtheorem{parag}[thm]{}
\newtheorem{rmk}[thm]{Remark}
\numberwithin{equation}{section}
\def\beq{\begin{equation}}
\def\eeq{\end{equation}}
\def\crash#1{}
\def\N{{\mathbb N}}
\def\Z{{\mathbb Z}}
\def\Q{{\mathbb Q}}
\def\R{{\mathbb R}}
\def\C{{\mathbb C}}
\def\P{{\mathbb P}}
\def\F{{\mathbb F}}
\def\l{\left}
\def\r{\right}
\def\[[{\l[\l[}
\def\]]{\r]\r]}
\def\p{\prime}
\def\sgq{\sigma_q}
\def\Sgq{\Sigma_q}
\def\dq{d_q}
\def\ord{{\rm ord}}
\def\cf{\emph{cf. }}
\def\ie{\emph{i.e. }}
\def\ds{\displaystyle}
\def\cA{{\mathcal A}}
\def\cB{{\mathcal B}}
\def\cC{{\mathcal C}}
\def\cF{{\mathcal F}}
\def\cM{{\mathcal M}}
\def\cN{{\mathcal N}}
\def\cH{{\mathcal H}}
\def\cL{{\mathcal{ L}}}
\def\cO{{\mathcal O}}
\def\cP{{\mathcal P}}
\def\cW{{\mathcal W}}
\def\wtilde{\widetilde}
\def\ul{\underline}
\def\ol{\overline}
\def\a{\alpha}
\def\be{\beta}
\def\de{\delta}
\def\ga{\gamma}
\def\sg{\sigma}
\def\la{\lambda}
\def\La{\Lambda}
\def\Na{\nabla}
\def\Sg{\Sigma}
\def\De{\Delta}
\def\Dq{\Delta_q}
\def\deg{\mathop{\rm deg}}
\def\GL{\mathop{\rm GL}}
\author{Lucia Di Vizio
\thanks{Lucia DI VIZIO,
Laboratoire de Mathématiques UMR 8100, CNRS,
Universit\'e de Versailles-St Quentin,
45 avenue des États-Unis
78035 Versailles cedex, France.
{\tt divizio@math.cnrs.fr}}
and
Charlotte Hardouin
\thanks{Charlotte HARDOUIN, Institut de Math\'{e}matiques de Toulouse,
118 route de Narbonne,
31062 Toulouse Cedex 9, France.
{\tt hardouin@math.univ-toulouse.fr}}}
\date{\today}
\title
{On the Grothendieck conjecture on $p$-curvatures
for $q$-difference equations
\thanks{Work partially supported by ANR-06-JCJC-0028.}}
\begin{document}
\bibliographystyle{alpha}

\maketitle

\begin{abstract}
In the present paper, we give a $q$-analogue of
Grothendieck conjecture on $p$-curvatures for $q$-difference equations
defined over the field of rational function $K(x)$, where $K$ is a finite
extension of a field of rational functions $k(q)$, with $k$ perfect.
Then we consider the generic (also called intrinsic) Galois group in the
sense of \cite{Katzbull} and \cite{DVInv}.
The result in the first part of the paper lead to a description of the generic Galois group
through the properties of the functional equations obtained specializing $q$
on roots of unity.
Although no general Galois correspondence holds in
this setting, in the case of positive characteristic, where nonreduced groups appear, we can prove some devissage
of the generic Galois group.
\par
In the last part of the paper,
we give a complete answer to the analogue of
Grothendieck conjecture on $p$-curvatures for $q$-difference equations
defined over the field of rational function $K(x)$, where $K$ is any finitely generated
extension of $\Q$ and $q\neq 0,1$: we prove that the generic Galois group of a $q$-difference module over $K(x)$
always admits an adelic description in the spirit of the Grothendieck-Katz conjecture.
To this purpose, if $q$ is an algebraic number, we prove a generalization of the results in \cite{DVInv}.
\end{abstract}

\maketitle

\setcounter{tocdepth}{1}
\tableofcontents

\section*{Introduction}

In the present paper, we give a complete answer to the Grothendieck-Katz's conjecture for $q$-difference equation
proving the following two open cases.
First of all, we allow $q$ to be a transcendental parameter.
In \cite{DVInv}, there was no hope of recovering information on the
classical Grothendieck conjecture for differential equations by letting $q$ tends to $1$, for lack of an appropriate topology.
On the contrary, here we allow $q$ to be a parameter and therefore we can recover a differential equation specializing $q$ to $1$.
Secondly, we generalize the results in \cite{DVInv}, proved for a number field, to the case of a finitely generated extension of $\Q$.
Thanks to those results, we show that the Galois group of a linear $q$-difference equation with coefficients in a field of rational functions
$K(x)$, with $K$ field of characteristic zero, can always be characterized in the spirit of the Grothendieck-Katz conjecture, via some curvatures.
In this way we give an ``adelic'' answer to the direct problem for Galois theory of $q$-difference equations, for which we do not have any kind of algorithm.

\medskip
The question of the algebraicity of solutions of differential or difference equations goes back at
least to Schwarz, who established in 1872 an exhaustive
list of hypergeometric differential equations having a full set of algebraic solutions.
Galois theory of linear differential equations, and more recently Galois theory of linear difference equations, have been developed to investigate the existence of algebraic
relations between the solutions of linear functional equations via the computation of a linear algebraic group called the Galois group.
In particular, the existence of a basis of algebraic solutions is essentially equivalent to having a finite Galois group.
The computation of these Galois groups thus provides a powerful tool to study the algebraicity of special functions.
The direct problem in differential Galois theory (\ie for differential equations) was solved by Hrushovski in \cite{Hrushcomp}.
Although he actually has a computational algorithm, the calculations of the Galois group of a differential equation is
still a very difficult problem, most of the time, out of reach.
For difference Galois theory, the existence of a general computational algorithm is still an open question.
\par
Grothendieck-Katz conjecture on $p$-curvatures conjugates these two aspects of the theory:
determining whether a differential equation has a full basis of
algebraic solutions
and solving the direct problem.
Thanks to
Grothendieck's conjecture on $p$-curvatures we have a (necessary and) sufficient
conjectural condition to test whether the solutions of a differential equation are algebraic or not.
More precisely, one can reduce a differential equation
$$
{\mathcal L}y=a_\mu(x){d^\mu y\over dx^\mu}+
a_{\mu-1}(x){d^{\mu-1} y\over dx^{\mu-1}}+
\dots+a_0(x)y=0,
$$
with coefficients in the field $\Q(x)$, modulo $p$ for almost all primes $p\in\Z$.
Then Grothendieck's conjecture, which remains open in full generality
(\cf \cite{Andregrothendieck})
predicts:

\begin{conjintro}[Grothendieck's conjecture on $p$-curvatures]
The equation ${\mathcal L}y=0$ has a full set of algebraic solutions
if (and only if)
for almost all primes $p\in\Z$ the reduction modulo
$p$ of ${\mathcal L}y=0$ has a full set of solutions in ${\F}_p(x)$.
\end{conjintro}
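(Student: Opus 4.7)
The plan attacks the two directions of the biconditional separately. The ``only if'' direction is classical and follows a standard reduction: if $\mathcal{L}y=0$ has a full basis of algebraic solutions over $\Q(x)$, then for almost all primes $p$ these solutions reduce modulo $p$ to algebraic elements of $\ol{\F_p(x)}$, and in characteristic $p$ every algebraic function over $\F_p(x)$ is annihilated by a sufficiently high iterate of $d/dx$ (by Jacobson--Katz), which forces the $p$-curvature of $\mathcal{L}$ to vanish modulo $p$. So the substance of the conjecture lies in the ``if'' direction.

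For the harder direction, I would first rephrase the hypothesis using Cartier's descent: vanishing of the $p$-curvature modulo $p$ is equivalent to the differential module $\mathcal{L}\bmod p$ becoming trivial after Frobenius pullback, hence to the existence of a full basis of horizontal sections with coefficients in $\F_p(x)$. The input data of the conjecture is therefore a \emph{simultaneous} mod-$p$ trivialization of $\mathcal{L}$ for almost every finite prime, and the goal is to globalize this to an algebraic trivialization over $\Q(x)$.

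Second, I would try to package these infinitely many trivializations into a single arithmetic-algebraization statement via Bost's theorem on algebraic leaves of foliations. Concretely, spread $\mathcal{L}$ out to $\mathrm{Spec}\,\Z[1/N][x]$, pick a $\Q$-rational base point $x_0$ where the solutions are regular, and consider the formal integral leaf through $(x_0,0)$ inside the total space of the associated first-order system on $\mathbb{A}^\mu$. The $p$-curvature hypothesis makes this leaf ``$A$-analytic'' at every finite place simultaneously with $p$-adic radius bounded below by an explicit constant, and Bost's slope inequality combined with his algebraization criterion reduces the problem to producing a sufficient \emph{archimedean} lower bound on the radius of formal convergence.

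The main obstacle --- and the reason the conjecture remains open in general --- is precisely this archimedean step. The $p$-curvature hypothesis gives uniform-in-$p$ control of the ultrametric Taylor radii but says essentially nothing about the complex-analytic growth of the coefficients, so the Arakelov slope inequality fails to close unless an external input (a $G$-function structure, a variation of Hodge structure, a period-type uniformization) is available. In every unconditional case treated so far --- Katz's theorem for Gauss--Manin connections, the Chudnovsky--André theorem for $G$-operators, the Bost--Andr\'e theorems for equations coming from geometry --- the archimedean estimate is supplied by such an auxiliary structure. My plan therefore reduces the conjecture to constructing the missing archimedean control purely from the $p$-adic hypothesis, and it is here that I expect any attempted proof to stop.
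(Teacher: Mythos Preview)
The statement you are trying to prove is not a theorem of the paper: it is stated there as \emph{Grothendieck's conjecture on $p$-curvatures}, explicitly flagged as ``remain[ing] open in full generality'' with a reference to \cite{Andregrothendieck}. The paper does not offer a proof; the conjecture appears only as background motivating the $q$-difference analogue that the paper actually establishes. So there is no ``paper's own proof'' to compare your proposal against.

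That said, your proposal is not a proof either, and you correctly diagnose why. The ``only if'' direction is indeed classical (vanishing $p$-curvature follows from the existence of a full basis of algebraic solutions). For the ``if'' direction, your reduction to an Arakelov/Bost-type algebraization criterion is the standard modern framework, and you are right that the archimedean radius-of-convergence bound is exactly the missing ingredient that no one knows how to extract from the $p$-adic hypothesis alone. Your final paragraph is an honest acknowledgment that the plan stalls at the same place every known approach does. So what you have written is a fair survey of the state of the art, but it is not a proof --- and neither the paper nor anyone else currently has one.
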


After \cite{Katzbull}, this is equivalent to the fact that
\emph{the Lie algebra of the generic Galois group of a differential module
$\cM=(M,\Na)$ over $\Q(x)$ is the smallest algebraic Lie subalgebra of
${\rm End}_{\Q(x)}(M)$ whose reduction modulo $p$ contains the $p$-curvature
for almost all $p$}. We are not explaining here the precise meaning of this statement,
to whom a large literature is devoted.

\medskip
The first and main result of the paper is the following.
We consider a perfect field $k$, the field of rational functions $K=k(q)$ and
a linear $q$-difference equation
$$
\cL y(x):=a_\nu(q,x)y(q^\nu x)+\dots+a_1(q,x)y(qx)+a_0(q,x)y(x)=0
$$
with coefficients in $K(x)$.
Note that below we will take $K$ to be a finite extension of $k(q)$, which we avoid to do here not to introduce
too technical notation.
\par
Then:

\begin{thmintro}\label{thmintro:grothnaive}
The equation $\cL y(x)=0$ has a full set of solutions in $K(x)$, linearly independent
over $K$, if and only if for almost all positive integer $\ell$ and all
primitive roots of unity $\zeta_\ell$ of order $\ell$
the equation
$$
a_\nu(\zeta_\ell,x)y(\zeta_\ell^\nu x)+\dots+a_1(\zeta_\ell,x)y(\zeta_\ell x)+a_0(\zeta_\ell,x)y(x)=0
$$
has a full set of
solutions in $k(\zeta_\ell)(x)$, linearly independent over $k(\zeta_\ell)$.
\end{thmintro}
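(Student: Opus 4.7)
The ``only if'' direction is essentially formal: if $Y(q,x) \in \GL_\nu(K(x))$ is a fundamental matrix of $\cL y = 0$, then for all but finitely many primitive $\ell$-th roots of unity $\zeta_\ell$---namely those away from the poles of the entries of $Y$ and from the zeros of $\det Y$---the specialisation $Y(\zeta_\ell,x)$ lies in $\GL_\nu(k(\zeta_\ell)(x))$ and remains a fundamental matrix of the specialised equation.

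For the converse I would recast rationality of solutions in Galois-theoretic language. Attaching to $\cL$ its companion matrix $A(q,x)$, form the $q$-difference module $\cM$ over $K(x)$; then $\cL y = 0$ admits a full set of $K$-linearly independent rational solutions if and only if the generic (intrinsic) Galois group $\Galan{\cM} \subset \GL_{\nu,K(x)}$ is trivial. The plan is then to bound $\Galan{\cM}$ through its cyclotomic specialisations. To this end I would choose an integral model: a finitely generated $k$-subalgebra $R \subset K$ containing $q,q^{-1}$ and a nonzero $r(q,x) \in R[x]$ such that the companion matrix, its inverse, and the scheme-theoretic data defining $\Galan{\cM}$ all live over $R[x,1/r]$. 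For almost every $\ell$ the specialisation $q \mapsto \zeta_\ell$ is then well-defined and produces a $q$-difference module $\cM_{\zeta_\ell}$ over $k(\zeta_\ell)(x)$, and the hypothesis says precisely that $\cM_{\zeta_\ell}$ is trivial, i.e.\ $\Galan{\cM_{\zeta_\ell}} = \{1\}$, for all such $\ell$.

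The heart of the argument---and, I expect, its main obstacle---is a comparison of ``adelic'' flavour between $\Galan{\cM}$ and the family $\{\Galan{\cM_{\zeta_\ell}}\}_\ell$: one must show that any closed subgroup scheme $H \subset \GL_{\nu,K(x)}$ whose reduction modulo $\Phi_\ell(q)$ contains $\Galan{\cM_{\zeta_\ell}}$ for almost all $\ell$ must already contain $\Galan{\cM}$ itself. This is the $q$-analogue of Katz's reconstruction of the Lie algebra of the generic differential Galois group from $p$-curvatures (cf.\ \cite{Katzbull}, \cite{DVInv}); the key inputs should be an integral model of the Galois-group scheme, the Zariski-density of the cyclotomic points inside $\operatorname{Spec} k[q,q^{-1}]$, and a base-change compatibility controlling how $\Galan{\cM}$ behaves under reduction modulo $\Phi_\ell$. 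The positive characteristic subtlety flagged in the abstract, according to which $\Galan{\cM}$ may be non-reduced, is to be resolved by the d\'evissage of $\Galan{\cM}$ announced in the abstract rather than by a naive Galois correspondence.

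Granted this comparison, the conclusion is immediate: if every $\Galan{\cM_{\zeta_\ell}}$ is trivial for $\ell \gg 0$, then the trivial group is an admissible choice of $H$, so $\Galan{\cM} = \{1\}$, whence $\cM$ is a trivial $q$-difference module and $\cL y = 0$ has a full set of $K$-linearly independent solutions in $K(x)$.
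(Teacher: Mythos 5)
Your ``only if'' direction is correct and coincides with the paper's remark that this implication is trivial: specialise a fundamental matrix $Y(q,x)\in\GL_\nu(K(x))$ at $q=\zeta_\ell$ for $\ell$ large enough to avoid the (finitely many) degeneracy loci.

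The converse, however, has a genuine gap: you defer the entire arithmetic core to a ``comparison of adelic flavour'' between $Gal(\cM_{K(x)},\eta_{K(x)})$ and the family $\{Gal(\cM_{\zeta_\ell},\eta)\}_\ell$, flag it yourself as the main obstacle, and then offer only speculative ingredients (integral models, Zariski-density of cyclotomic points, base-change compatibility) without supplying the argument. These ingredients are not enough. The relevant statement is \emph{not} a consequence of the Zariski density of cyclotomic points in $\operatorname{Spec} k[q,q^{-1}]$: the condition ``$\Sgq^{\kappa_v}\equiv 1$ modulo $\phi_v$ for almost all $v$'' is a family of closed conditions on specialisations, but there is no algebraic variety over $k[q,q^{-1}]$ whose fibre at $q=\zeta_\ell$ carves out this family; the curvature conditions are genuinely place-by-place. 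Moreover your logical order is reversed relative to the paper: the characterisation of the generic Galois group by curvatures (Theorem \ref{thm:genGalois}) is \emph{deduced from} the module-triviality theorem (Theorem \ref{thm:GrothKaz}), whose proof is the content here, so attempting to prove the latter via the former risks circularity. Even Corollary \ref{cor:trivialgroup} (``$Gal=\{1\}\Rightarrow\cM$ trivial'') is established in the paper as a consequence of Theorem \ref{thm:GrothKaz}, not beforehand.

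What is actually needed, and what the paper supplies, are three concrete steps your sketch omits entirely. First, one rules out irregular singularities: if infinitely many curvatures are nilpotent, the leading term of the associated system at $0$ would be nilpotent after ramification, contradicting the formal classification (\S\ref{subsec:regularity}). Second, one proves the exponents lie in $q^{\Z}$; this rests on Lemma \ref{lemma:rationalfunctions}, an elementary but nontrivial statement of rational-dynamics flavour saying that a rational function $f\in k(q)$ with $f(\zeta_\ell)\in\mu_\ell$ for infinitely many primes $\ell$ must be a power of $q$. Third, and most importantly, once one has a formal fundamental solution $Y(x)=\sum G_{[n]}(0)x^n\in\GL_\nu(K[[x]])$ one must show it is the expansion of a rational matrix; this is a Borel--Dwork-type rationality criterion over the function field $K/k(q)$, based on Gauss-norm estimates at \emph{every} place of $K$ and the product formula (Proposition \ref{prop:boreldwork}). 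This last step is the true arithmetic input; it cannot be replaced by any density or base-change argument, and it is precisely the step that fails to appear in your plan. Without it, you only have a reduction of the theorem to an unproved conjecture.
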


We will denote by $\sgq$ the $q$-difference operator $f(x)\mapsto f(qx)$,
acting on any algebra where it make sense to consider it (for instance $K(x)$, $K((x))$ etc.).
A $q$-difference module $\cM_{K(x)}=(M_{K(x)},\Sgq)$ over $K(x)$ is a $K(x)$-vector space of finite dimension $\nu$
equipped with a $\sgq$-semilinear bijective operator $\Sgq$:
$$
\Sgq(fm)=\sgq(f)\Sgq(m),\,
\hbox{for any $m\in M$ and $f\in K(x)$.}
$$
One goes from a linear $q$-difference equation to a linear $q$-difference system and, then, to
a linear $q$-difference module, and
viceversa, in the same way as one goes from linear differential equations to linear differential
systems and to vector bundles with connection.
\par
One can always find
a polynomial $P(x)\in k[q,x]$ and
a $q$-difference algebra of the form
$$
\cA=k\l[q,x,\frac{1}{P(x)},\frac{1}{P(qx)},\frac{1}{P(q^2x)},...\r],
$$
so that there exists an $\cA$-lattice $M$ of $M_{K(x)}$, stable by $\Sgq$.
We obtain in this way a $q$-difference module $\cM=(M,\Sgq)$ over $\cA$, that allows to recover
$\cM_{K(x)}$ by extension of scalars.
We denote by $\{\phi_v\}_v$ the family of all irreducible polynomials in $k[q]$ whose roots are root of unity.
For any $v$ we denote by $\kappa_v$ the order of the roots of $\phi_v$.
We prove the following statement, which is equivalent to Theorem \ref{thmintro:grothnaive} above:

\begin{thmintro}[\cf Theorem \ref{thm:GrothKaz} below]\label{thmintro:GrothKaz}
The $q$-difference module $\cM_{K(x)}$ is trivial if and only if
the operator $\Sgq^{\kappa_v}$ acts as the identity on
$M\otimes_{k[q]}\frac{k[q]}{(\phi_v)}$ for almost all $v$.
\end{thmintro}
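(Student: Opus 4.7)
The plan is to prove the two implications separately. The forward direction is essentially formal: if $\cM_{K(x)}$ is trivial, it admits a $K(x)$-basis of $\Sgq$-invariant elements; by inverting finitely many polynomials of $k[q,x]$, one obtains an enlarged $q$-difference algebra $\cA'\supset\cA$ over which that basis is integral. For any $\phi_v$ coprime to the finitely many irreducible factors in $k[q]$ thus introduced, the reduction modulo $\phi_v$ of the invariant basis trivializes $\Sgq$ on the enlarged lattice, and this triviality restricts to $M\otimes_{k[q]}k[q]/(\phi_v)$, so that $\Sgq^{\kappa_v}$ acts as the identity there for almost all $v$.

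For the converse, assume $\Sgq^{\kappa_v}\equiv\mathrm{Id}$ modulo $\phi_v$ for almost all $v$. The first step is a local translation: modulo $\phi_v$, the operator $\sgq$ acts on $k(\zeta_v)(x)$ as the generator $f(x)\mapsto f(\zeta_v x)$ of the Galois group $\Z/\kappa_v\Z$ of the Kummer extension $k(\zeta_v)(x)/k(\zeta_v)(x^{\kappa_v})$. The vanishing hypothesis equips the specialized module $M\otimes_{k[q]}k(\zeta_v)(x)$ with a genuine Galois-descent datum, and Hilbert 90 yields a fundamental matrix $Y_v\in\GL_\nu(k(\zeta_v)(x))$ trivializing it. Moreover, standard degree estimates for rational solutions of $q$-difference systems bound the numerator and denominator degrees of $Y_v$ by a constant $D$ depending only on the pole structure of the matrix $A(q,x)\in\GL_\nu(K(x))$ of $\Sgq$, hence independent of $v$.

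To globalize, I would construct the formal fundamental solution $\hat Y(x)=\sum_{n\geq 0}Y_n x^n\in\GL_\nu(K[[x]])$ of $\hat Y(qx)=A(q,x)\hat Y(x)$, after a rational gauge normalization rendering $A(q,0)$ non-resonant over $K$, and check that for almost every $v$ the specialization $\hat Y|_{q=\zeta_v}$ is well defined and coincides with the matrix $Y_v$ normalized by the same initial condition at $x=0$. By Kronecker's rationality criterion, each entry of $\hat Y$ has Hankel matrix of rank $\leq D$ upon specialization at infinitely many $\zeta_v\in\bar k$. Since the locus in $\mathbb{A}^1_k$ where the Hankel matrix has rank $\leq D$ is Zariski-closed and contains infinitely many points, it coincides with the whole affine line, so that the Hankel matrix of $\hat Y$ over $K$ has rank $\leq D$ and $\hat Y\in\GL_\nu(K(x))$.

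The main obstacle is this local-global compatibility: verifying that $\hat Y$ admits a well-defined specialization at $q=\zeta_v$ equal to $Y_v$. The recurrence $(q^n-1)Y_n=\sum_{m+k=n,\,m\geq 1}A_m Y_k$ introduces factors $(q^n-1)^{-1}$ in the $Y_n$ that acquire poles at $q=\zeta_v$ whenever $\kappa_v\mid n$, and one must show that the hypothesis $\Sgq^{\kappa_v}\equiv\mathrm{Id}$ modulo $\phi_v$ produces precisely the numerator cancellations required for each $Y_n$ to remain regular at $\zeta_v$. Once this compatibility is in place, the Hankel rank argument sketched above completes the proof.
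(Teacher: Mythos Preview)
Your forward direction is fine. For the converse there are two substantial gaps beyond the obstacle you already flag. First, you assume a rational gauge renders $A(q,0)$ well-defined and non-resonant, so that $\hat Y\in\GL_\nu(K[[x]])$ exists; this presupposes that $\cM_{K(x)}$ is regular singular at $0$ with exponents in $q^{\Z}$, which is precisely Theorem~\ref{thm:FormalSol} and is not short --- the exponent part (Proposition~\ref{prop:ratexp}) reduces to a rational-dynamics lemma asserting that $f\in k(q)$ with $f(\mu_\ell)\subset\mu_\ell$ for infinitely many primes $\ell$ must lie in $q^{\Z}$, and this is where the transcendence of $q$ over $k$ is genuinely used. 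Second, the uniform bound $D$ on $\deg_x Y_v$ is not justified, and the natural estimate goes the wrong way: a pole $\alpha\neq 0$ of a rational solution of $Y(\zeta_v x)=A_1(\zeta_v,x)Y(x)$ propagates (absent cancellation by a singularity of $A_1$) to the whole $\mu_{\kappa_v}$-orbit of $\alpha$, so the Hilbert~90 construction yields $\deg_x Y_v$ of order $\kappa_v\cdot\deg_x A$, unbounded in $v$. Without a uniform $D$, your Hankel/Zariski-closure argument collapses, since for each fixed $D$ the $(D{+}1)$-minors are only known to vanish at the possibly finite set of $\zeta_v$ with $\deg_x Y_v\le D$.

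The paper sidesteps both problems. After establishing regularity and normalizing $A(0)=1$, it shows (Proposition~\ref{prop:iteratedred}) that the curvature hypothesis forces $|G_{[n]}|_{v,\mathrm{Gauss}}\le 1$ for almost all $v\in\cC$ and all $n$ --- exactly the integrality that would make your specialization well-defined --- and then feeds this, together with infinite meromorphic radius at the places of $\cP_\infty$, into a Borel--Dwork rationality criterion (Proposition~\ref{prop:boreldwork}); no comparison with the individual $Y_v$ and no degree bound is needed.
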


The module $\cM_{K(x)}$ is said to be trivial if it is isomorphic
to $(K^\nu\otimes_K K(x),1\otimes\sgq)$, for some positive integer $\nu$ or, equivalently if
an associated linear $q$-difference equation in a cyclic basis has a full set of rational solutions.
Notice that $\Sgq^{\kappa_v}$ induces
an $\frac{k[q]}{(\phi_v)}$-linear map on $M\otimes_{k[q]}\frac{k[q]}{(\phi_v)}$.
\par
We consider the collection $Constr(\cM_{K(x)})$
of $K(x)$-linear algebraic constructions of $\cM_{K(x)}$ (direct sums, tensor product, symmetric and antisymmetric product, dual).
The operator $\Sgq$ induces a $q$-difference operator on every
element of $Constr(\cM_{K(x)})$, that we will still call $\Sgq$.
Then, the generic Galois group of $\cM_{K(x)}$ is defined as:
$$
\begin{array}{l}
Gal(\cM_{K(x)},\eta_{K(x)})=
\{\varphi\in \GL(M_{K(x)}):
\hbox{$\varphi$ stabilizes}\\
\hskip 40 pt\hbox{every subset stabilized by $\Sgq$, in any construction}\}
\end{array}
$$
As in \cite{Katzbull}, Theorem \ref{thmintro:GrothKaz} is equivalent to the following
statement, whose precise meaning is explained in
\S\ref{sec:genericgaloisgroup}:

\begin{thmintro}[\cf Theorem \ref{thm:genGalois} below]\label{thmintro:genGalois}
The generic Galois group $Gal(\cM_{K(x)},\eta_{K(x)})$ is the smallest algebraic subgroup
of $\GL(M_{K(x)})$ that contains the operators $\Sgq^{\kappa_v}$ modulo $\phi_v$ for almost all $v$.
\end{thmintro}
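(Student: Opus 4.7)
The plan is to deduce Theorem \ref{thmintro:genGalois} from Theorem \ref{thmintro:GrothKaz} by a formal Tannakian argument, in the spirit of the equivalence that Katz establishes in \cite{Katzbull} for differential modules (and whose $q$-difference analogue over number fields was carried out in \cite{DVInv}). Throughout, let $G := Gal(\cM_{K(x)},\eta_{K(x)})$ and let $H \subseteq \GL(M_{K(x)})$ denote the smallest algebraic subgroup which, after base change to the residue field $k[q]/(\phi_v)$, contains $\Sgq^{\kappa_v}\bmod\phi_v$ for almost all $v$.

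The inclusion $H \subseteq G$ is immediate from the definitions. Any $\Sgq$-stable $K(x)$-subspace of a construction of $\cM_{K(x)}$ extends, over a suitable localization of $\cA$, to a $\Sgq$-stable $\cA$-lattice; its reduction modulo $\phi_v$ is then $\Sgq^{\kappa_v}$-stable, the operator being $k[q]/(\phi_v)$-linear after reduction (since $\sgq^{\kappa_v}$ acts as the identity on the residue field). Each curvature therefore preserves every object that $G$ preserves, and so defines a point of $G$ with values in the corresponding residue field; since $G$ is Zariski closed, $H \subseteq G$.

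For the reverse inclusion $G \subseteq H$ I invoke Chevalley's theorem: $H$ is the stabilizer in $\GL(M_{K(x)})$ of some line $L$ inside a construction $\cW_{K(x)}$ of $\cM_{K(x)}$. It suffices to prove that $L$ is stable under $\Sgq$, for then $L$ itself belongs to $Constr(\cM_{K(x)})$ and is stabilized by $G$, giving $G \subseteq \text{Stab}(L) = H$. To establish $\Sgq$-stability of $L$, I would associate to the pair $(L \subset \cW_{K(x)})$ an auxiliary one-dimensional $q$-difference module $\cL$, a sub-quotient of a suitable construction of $\cM_{K(x)}$, whose triviality over $K(x)$ is equivalent to $\Sgq L = L$. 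The hypothesis that $H$ — hence each curvature $\Sgq^{\kappa_v}\bmod\phi_v$ — stabilizes $L$ then translates into triviality modulo $\phi_v$ of all the curvatures of $\cL$, and Theorem \ref{thmintro:GrothKaz} applied to $\cL$ forces $\cL$ to be trivial, whence the claim.

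The main obstacle is the manufacture of the auxiliary module $\cL$: it must be set up so that the stability of the non-$\Sgq$-invariant line $L$ under $H$ is encoded exactly by the triviality of the curvatures of $\cL$. This is where the $\cA$-lattice framework of the paper — which permits reductions modulo $\phi_v$ of subspaces of constructions that are not a priori $\Sgq$-stable — plays a crucial role; once $\cL$ is in hand the rest is a formal Tannakian reduction to Theorem \ref{thmintro:GrothKaz}, which carries all the arithmetic content.
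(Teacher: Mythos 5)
The first inclusion (your $H\subseteq G$, the paper's ``$G\subseteq Gal$'') is fine and matches the paper's Lemma~\ref{lemma:genGalois} in substance: the Galois group can be realized as the stabilizer of a $\Sgq$-stable line (Remark~\ref{rmk:noetherianity}), whose $\cA$-lattice reduces mod $\phi_v$ to a subspace stable under the $\cA/(\phi_v)$-linear operator $\Sgq^{\kappa_v}$.

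The second inclusion is where your proposal breaks down, and not merely because the auxiliary module $\cL$ is left unconstructed: the target of the construction is itself wrong. You want to show that the Chevalley line $L$ (with $H=\mathrm{Stab}(L)$) is $\Sgq$-stable. But if $\cW_{K(x)}$ denotes the \emph{smallest} $q$-difference submodule of the ambient construction containing $L$, then $\Sgq$-stability of $L$ would force $\cW_{K(x)}=L$, i.e.\ $\cW_{K(x)}$ of rank one; nothing guarantees this, and in general $L\subsetneq W_{K(x)}$ is \emph{not} $\Sgq$-stable. Indeed the whole difficulty of the theorem (emphasized in the paper's discussion of the failure of Galois correspondence) is precisely that the generic Galois group stabilizes many lines that are not $q$-difference submodules; Chevalley's theorem hands you one such arbitrary line and you cannot expect it to be $\Sgq$-stable, nor can you invoke Remark~\ref{rmk:noetherianity} for $H$ without first knowing $H=Gal$ (which would be circular).

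The paper's actual route sidesteps this. Writing $W$ for the $\cA$-lattice of $\cW_{K(x)}$: a generator $m$ of $L$ is a cyclic vector for $\cW$, and because $\Sgq^{\kappa_v}$ commutes with $\Sgq$ and stabilizes $L\otimes\cA/(\phi_v)$, it acts \emph{diagonally} in the induced cyclic basis modulo $\phi_v$. One then passes to $W^*\otimes W$ and to the line $L'$ spanned by the corresponding rank-one projector $m^*\otimes m$: the curvatures not merely stabilize but \emph{fix} $L'\otimes\cA/(\phi_v)$, hence act as the identity on the whole minimal $q$-difference submodule $\cW'$ generated by $L'$ (again using commutation with $\Sgq$). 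Now Theorem~\ref{thmintro:GrothKaz} applies to $\cW'$ and forces $\cW'_{K(x)}$ to be trivial; by Tannakian functoriality $Gal(\cM_{K(x)},\eta_{K(x)})$ surjects onto $Gal(\cW'_{K(x)},\eta_{K(x)})=\{1\}$, so it fixes $L'$, i.e.\ commutes with the projector $m^*\otimes m$, and therefore stabilizes its image $L$. This yields $Gal(\cM_{K(x)},\eta_{K(x)})\subseteq\mathrm{Stab}(L)=H$. The mechanism you need is not ``make $L$ a $q$-difference line'' but ``replace the stabilizer condition on $L$ by a fixing condition on $L'\subset W^*\otimes W$, so that Theorem~\ref{thmintro:GrothKaz} can be applied to the $q$-difference module generated by $L'$.''
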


In the case of positive characteristic, the group $Gal(\cM_{K(x)},\eta_{K(x)})$
is not necessarily reduced. Although
there is no Galois correspondence for generic Galois groups,
in the nonreduced case we can prove some devissage.
In fact, let $p>0$ be the characteristic of $k$ and let
us consider the short exact sequence associated with the largest
reduced subgroup $Gal_{red}(\cM_{K(x)},\eta_{K(x)})$ of
the generic Galois group $Gal(\cM_{K(x)},\eta_{K(x)})$:
$$
1\longrightarrow Gal_{red}(\cM_{K(x)},\eta_{K(x)})
\longrightarrow Gal(\cM_{K(x)},\eta_{K(x)})
\longrightarrow \mu_{p^\ell}\longrightarrow 1.
$$
Then we have (\cf Theorem \ref{thm:genGaloisred} and Corollary \ref{cor:diffgenGaloisred} below):

\begin{thmintro}
\begin{itemize}
\item
The group $Gal_{red}(\cM_{K(x)},\eta_{K(x)})$ is the smallest algebraic subgroup
of $\GL(M_{K(x)})$ that contains the operators $\Sgq^{\kappa_v p^\ell}$ modulo $\phi_v$ for almost all $v\in\cC$.

\item
$Gal_{red}(\cM_{K(x)},\eta_{K(x)})$ is the generic Galois group of the $q^{p^\ell}$-difference
module
$(M_{K(x)},\Sgq^{p^\ell})$.

\item
Let $\wtilde K$ be a finite extension of $K$ containing a $p^\ell$-th root $q^{1/p^\ell}$ of $q$.
The generic Galois group $Gal(\cM_{\wtilde K(x^{1/p^\ell})},\eta_{\wtilde K(x^{1/p^\ell})})$ is reduced
and
$$
Gal(\cM_{\wtilde K(x^{1/p^\ell})},\eta_{\wtilde K(x^{1/p^\ell})})
\subset Gal_{red}(\cM_{K(x)},\eta_{K(x)})
\otimes_{K(x)}\wtilde K(x^{1/p^\ell}).
$$
\end{itemize}
\end{thmintro}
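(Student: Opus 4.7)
My plan is to prove the second bullet first---that $Gal_{red}(\cM_{K(x)},\eta_{K(x)})$ equals the generic Galois group of the $q^{p^\ell}$-difference module $(M_{K(x)},\Sgq^{p^\ell})$---and then derive the other two bullets as consequences. The conceptual point is that $\mu_{p^\ell}$, being an infinitesimal group scheme killed by the $p^\ell$-th power map, should disappear systematically when one replaces $\Sgq$ by $\Sgq^{p^\ell}$.

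For the second bullet I would establish two inclusions. The containment $Gal((M_{K(x)},\Sgq^{p^\ell}))\subseteq Gal(\cM_{K(x)})$ is tautological, since every $\Sgq$-stable subobject of any tensor construction of $\cM_{K(x)}$ is a fortiori $\Sgq^{p^\ell}$-stable. To land inside $Gal_{red}$, I would compose with the projection $Gal(\cM_{K(x)})\twoheadrightarrow\mu_{p^\ell}$ coming from the short exact sequence and argue that this composition is trivial: the quotient $\mu_{p^\ell}$ records precisely the ``$p^\ell$-th root'' obstruction attached to $\Sgq$, so when the subgroup under consideration is determined by data only involving $\Sgq^{p^\ell}$, its image in $\mu_{p^\ell}$ is automatically a $p^\ell$-th power, hence the identity. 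For the reverse inclusion $Gal_{red}\subseteq Gal((M_{K(x)},\Sgq^{p^\ell}))$, I would use a Tannakian argument: the one-dimensional representation through which $Gal$ surjects onto $\mu_{p^\ell}$ is one on which $\Sgq^{p^\ell}$ already acts trivially, so every $\Sgq^{p^\ell}$-stable subobject of a construction is stable under the full preimage of the identity in $\mu_{p^\ell}$, that is, under $Gal_{red}$.

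The first bullet is then obtained by applying Theorem \ref{thm:genGalois} to $(M_{K(x)},\Sgq^{p^\ell})$: outside a finite set of bad $v$ (most delicately, those for which $p$ divides $\kappa_v$), the irreducible factors of $k[q^{p^\ell}]$ whose roots are roots of unity correspond to the $\phi_v$'s with the same order $\kappa_v$, so the curvature-type generators of the Galois group are $(\Sgq^{p^\ell})^{\kappa_v}=\Sgq^{\kappa_v p^\ell}$ modulo $\phi_v$, precisely as claimed. For the third bullet, the base change to $\wtilde K(x^{1/p^\ell})$ introduces both $q^{1/p^\ell}$ and $x^{1/p^\ell}$; this amounts to a coherent choice of a $p^\ell$-th root of the action of $\Sgq$, trivializing the $\mu_{p^\ell}$-obstruction so that the new generic Galois group is reduced. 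Functoriality of the generic Galois group under faithfully flat base change, combined with the second bullet, then places it inside $Gal_{red}(\cM_{K(x)},\eta_{K(x)})\otimes_{K(x)}\wtilde K(x^{1/p^\ell})$; the inclusion need not be an equality because the larger base field introduces new $\Sgq$-stable subobjects.

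The main obstacle I anticipate lies in the Tannakian step of the converse inclusion in the second bullet: one must produce an explicit identification of the $\mu_{p^\ell}$-quotient of $Gal$ with the obstruction to extracting a $p^\ell$-th root of the generator inside the Galois group, together with a precise dictionary between $\Sgq^{p^\ell}$-stable subquotients of constructions and representations factoring through $Gal_{red}$. This demands a careful Tannakian formalism adapted to generic (possibly non-reduced) Galois groups in positive characteristic, presumably developed earlier in the paper, which will have to be invoked with some delicacy.
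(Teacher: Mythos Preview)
Your plan inverts the paper's logical order, and the step you flag as ``the main obstacle'' is a genuine gap that the paper does not attempt to fill by Tannakian means. You want to prove the second bullet directly and then read off the first from Theorem~\ref{thm:genGalois}; the paper goes the other way, proving the first bullet by an induction on $\ell$ driven entirely by curvatures, and deducing the second bullet as a corollary.

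Your proposed Tannakian arguments for the second bullet are not proofs. That the image of $Gal((M_{K(x)},\Sgq^{p^\ell}))$ in $\mu_{p^\ell}$ is ``automatically a $p^\ell$-th power'' is an assertion, not an argument: nothing in the definition of that group as the stabilizer of $\Sgq^{p^\ell}$-stable subobjects forces its image in the quotient to be trivial. For the reverse inclusion you claim that $\Sgq^{p^\ell}$ acts trivially on the line realizing the $\mu_{p^\ell}$-quotient and that this forces every $\Sgq^{p^\ell}$-stable subobject to be $Gal_{red}$-stable; neither the premise nor the implication is justified, and there is no earlier ``Tannakian formalism adapted to nonreduced generic Galois groups'' in the paper to invoke.

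What the paper actually does is the following. The key arithmetic observation (which you half-see but misstate) is that in characteristic $p$ the residue field $k_v$ has characteristic $p$, so the multiplicative order $\kappa_v$ of $q_v$ is \emph{always} coprime to $p$; there are no ``bad $v$ with $p\mid\kappa_v$'' to exclude. Hence $q^{p^\ell}$ has the same order $\kappa_v$ modulo every cyclotomic place, and Theorem~\ref{thm:genGalois} applied to $(M_{K(x)},\Sgq^{p^\ell})$ gives immediately that $Gal(\cM^{(\ell)})$ is the smallest subgroup containing the $\Sgq^{\kappa_v p^\ell}$ modulo $\phi_v$ (this is Lemma~\ref{lemma:devissage2}). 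The equality of this group with $Gal_{red}$ is then proved by induction on $\ell$: one first shows $Gal_{red}\subset H$ using Corollary~\ref{cor:finitegroups} on the line defining $H$ (Lemma~\ref{lemma:devissage1}), and if the inclusion is strict one extracts an intermediate $0<\ell'<\ell$ with $H=Gal(\cM^{(\ell')})$, to which the inductive hypothesis applies. The second bullet then drops out by combining these two lemmas. The third bullet is again obtained via curvatures rather than by an abstract ``trivialization of the obstruction'': since $\Sg_{q^{1/p^\ell}}(\ul e\otimes 1)=(\ul e\otimes 1)A(x)$, the curvatures of $\cM_{\wtilde K(x^{1/p^\ell})}$ are $\Sg_{q^{1/p^\ell}}^{\kappa_v p^\ell}=\Sgq^{\kappa_v p^\ell}\otimes 1$, which already sit inside $Gal_{red}\otimes\wtilde K(x^{1/p^\ell})$ by the first bullet.
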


In the last section of the paper we apply the previous results to the characterization of
the generic Galois group of a $q$-difference module over $\C(x)$, with $q\in\C\smallsetminus\{0,1\}$.
We prove a statement that can be summarized informally in the
following way:

\begin{thmintro}\label{thmintro:katzcomplex}
The generic Galois group of a complex $q$-difference module $\cM=(M,\Sgq)$
is the smallest algebraic subgroup of $\GL(M)$, that contains a cofinite nonempty subset of
curvatures.
\end{thmintro}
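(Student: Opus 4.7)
The plan is to reduce the complex setting to a situation already handled in the paper by a descent argument, then treat separately the subcases in which $q$ is transcendental or algebraic over $\Q$.

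First, I would descend $\cM$ to a $q$-difference module $\cM_{K(x)}$ over $K(x)$ for a suitable finitely generated subfield $K\subset\C$ containing $q$. Concretely, after choosing a cyclic basis one obtains a scalar $q$-difference equation with coefficients in some finitely generated $\Q$-algebra; take $K$ to be the subfield of $\C$ generated over $\Q$ by $q$ and these coefficients. Then $\cM\cong \cM_{K(x)}\otimes_{K(x)}\C(x)$, and the tannakian definition of the generic Galois group, as the stabilizer in every algebraic construction of every $\Sgq$-stable $K(x)$-subspace, is compatible with the flat base change $K(x)\to\C(x)$, so
$$
Gal(\cM,\eta) \;=\; Gal(\cM_{K(x)},\eta_{K(x)})\otimes_{K(x)}\C(x).
$$
It therefore suffices to give an adelic description of $Gal(\cM_{K(x)},\eta_{K(x)})$.

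Now I split into cases. If $q$ is transcendental over $\Q$, then $K$ is a finite extension of $k(q)$ for some finitely generated subfield $k\subset K$; since we are in characteristic zero, $k$ is automatically perfect, and Theorem \ref{thmintro:genGalois} applies directly, exhibiting $Gal(\cM_{K(x)},\eta_{K(x)})$ as the smallest algebraic subgroup of $\GL(M_{K(x)})$ containing the operators $\Sgq^{\kappa_v}$ modulo $\phi_v$ for almost every cyclotomic prime $\phi_v\in k[q]$. If on the other hand $q$ is algebraic over $\Q$, then $K$ is a finitely generated extension of $\Q$ containing an algebraic number, and I would invoke the generalization of \cite{DVInv} announced in the abstract: the generic Galois group is generated by the curvatures attached to the cofinite family of finite places $v$ of $K$ at which $\cM_{K(x)}$ has good reduction (so that $q$ reduces to an element of a finite residue field, hence a root of unity). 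In both cases the description is of the desired form, and extending scalars back to $\C(x)$ preserves it, completing the proof.

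The main obstacle will be the algebraic case, namely generalizing \cite{DVInv} from number fields to arbitrary finitely generated extensions of $\Q$: one has to transfer the arithmetic machinery (global height estimates, $p$-adic analyticity of solutions, behaviour of lattices modulo $v$) across the geometric transcendence directions of $K$, and identify a cofinite set of places at which the Katz-type density argument — that the algebraic group generated by the curvatures coincides with $Gal(\cM_{K(x)},\eta_{K(x)})$ — remains valid. By contrast, the descent from $\C$ and the patching of the two cases are largely formal, once the tannakian compatibility of the generic Galois group with scalar extension has been checked and the two Frobenius-type invariants (cyclotomic $\phi_v$ on the $q$-side, and finite places of $K$ on the arithmetic side) are shown to furnish the same kind of ``curvature'' in the sense of the statement.
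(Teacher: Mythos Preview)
Your overall architecture — descend to a finitely generated $K\subset\C$, then split on whether $q$ is transcendental or algebraic — is exactly the paper's. Two points deserve attention.

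First, the equality $Gal(\cM,\eta_{\C(x)})=Gal(\cM_{K(x)},\eta_{K(x)})\otimes_{K(x)}\C(x)$ is \emph{not} automatic for the first $K$ you write down. The generic Galois group over $\C(x)$ stabilizes all $\Sgq$-stable $\C(x)$-subspaces of the constructions, and there is no reason these descend to $K(x)$; so a priori you only get an inclusion $\subset$. The paper handles this by a noetherianity argument (Proposition~\ref{prop:finitegenextension}): the group over $\C(x)$ is cut out by finitely many submodules, and one enlarges $K$ to a finitely generated $K'$ over which these are already defined. Your ``compatibility with flat base change'' is not a correct justification here; you need the noetherian step.

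Second, your plan for the algebraic case is not the paper's, and the direction you indicate (transporting the global height and $p$-adic analyticity machinery of \cite{DVInv} across the transcendence directions of $K$) would be hard to carry out, since that machinery relies on the product formula for number fields. The paper instead argues as follows: write $K=Q(\ul a)$ with $Q$ a number field and $\ul a$ a transcendence basis; build Sauloy's canonical meromorphic solutions $Y_0,Y_\infty$ at $0$ and $\infty$ and the elliptic Birkhoff matrix $B=Y_0^{-1}Y_\infty$; then \emph{specialize} $\ul a$ to generic algebraic values $\ul\alpha$, so that the specialized system lives over a number field and \cite{DVInv} applies, forcing $B^{(\ul\alpha)}$ to be constant; finally a zero--pole analysis shows $B$ itself is constant, hence the solutions are rational. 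So the reduction to \cite{DVInv} is by specialization of parameters, not by extending its arithmetic estimates. You correctly located the hard case, but the mechanism you sketched is not the one that works.
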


This means that there exists a field $K$, finitely generated over $\Q$ and containing $q$,
and a $q$-difference module $\cM_{K(x)}$ over $K(x)$,
such that
$$
\l\{\begin{array}{l}
\cM=\cM_{K(x)}\otimes\C(x),\\
Gal(\cM,\eta_{\C(x)})=Gal(\cM_{K(x)},\eta_{K(x)})\otimes \C(x).
\end{array}\r.
$$
Then:
\begin{itemize}
\item
If $q$ is a transcendental number, we are in the situation studied in the present paper.

\item
If $q$ is a root of unity, the theorem is proved for algebraic generic Galois groups in \cite{Hendrikstesi}.

\item
If $q$ is an algebraic number and $K$ is a number field, we are in the situation studied in \cite{DVInv}.
\end{itemize}
In \S\ref{sec:modulescarzero} below, Theorem \ref{thmintro:katzcomplex}
is proved in the only remaining open case, namely under the assumption that
$q$ is algebraic and $K/\Q$ is finitely generated, but not finite.
Notice that the arguments of \cite{DVInv} use crucially the fact that number fields satisfy the product formula,
therefore they cannot be applied to this situation.
Here we consider a basis of transcendence of $K/\Q$ and consider its elements as parameters to be specialized in the field of algebraic
numbers. After the specialization we apply the results in \cite{DVInv} and then we recover the information over $K(x)$.
To prove this last step, we uses Sauloy's canonic solutions and Birkhoff connection matrix as constructed in \cite{Sfourier} .

\medskip
The results of this paper rise the following considerations:
\begin{trivlist}

\item
\emph{Comparison theorem with other differential Galois theories (see \cite{diviziohardouinComp}).}
There are many different Galois theories for $q$-difference equations.
In \cite{diviziohardouinComp}, we elucidate the comparison of all of them with the generic Galois group introduced above.
We point out that Theorem \ref{thmintro:katzcomplex} is an important ingredient in the proof of the comparison theorems.
We prove in particular that the dimension of the generic Galois group over $K(x)$ is equal to the transcendency
degree of the extension generated over the field of rational functions with elliptic coefficients over
$C^*/q^\Z$ by a full set of solutions of the $q$-difference equation.

\item
\emph{Specialization of the parameter $q$ (see \cite{diviziohardouinComp}).}
We have proved that, when $q$ is a parameter, \ie when it is transcendental over $k$,
independently of the characteristic, the structure of a $q$-difference equation
is totally determined by the structure of the $\xi$-difference equations obtained specializing
$q$ to almost all primitive root of unity $\xi$.
In \cite{diviziohardouinComp}, we prove that the specialization of the algebraic generic Galois group
at $q=a$ for any $a$ in the algebraic closure of $k$, contains the algebraic generic
Galois group of the specialized equation. In other words, we can say that the information spreads from the roots of unity to
the other possible specializations of $q$.

\item
\emph{Link to the classical Grothendieck conjecture (see \cite{diviziohardouinComp}).}
If $k$ is a number field, we can reduce the equations to positive characteristic, so that
$q$ reduces to a parameter to positive characteristic.
So, if we have a $q$-difference equation $Y(qx)=A(q,x)Y(x)$ with coefficients in a field $k(q,x)$ such that $[k:\Q]<\infty$,
we can either reduce it to positive characteristic and then specialize $q$, or specialize $q$ and then reduce
to positive characteristic. In particular, letting $q\to 1$ in
$$
\frac{Y(qx)-Y(x)}{(q-1)x}=\frac{A(q,x)-1}{(q-1)x}Y(x)
$$
we obtain a differential system.
In \cite{diviziohardouinComp},
we elucidate some of the inclusions of the Galois groups that we find in this way.
This could give some new
method to tackle the Grothendieck conjecture for differential equations, by confluence
and q-deformation of a differential equation. The idea would be to find
a suitable q-deformation to translate the arithmetic of the curvatures of the linear
differential equation into the q-arithmetic of the curvatures of the q-difference
equation obtained by deformation.

\item
\emph{Link with the Galois theory of parameterized difference equations (see \cite{diviziohardouinqMalg}).}
In \cite{HardouinSinger}, the authors attach to a $q$-difference equation
a parameterized Galois group, which is a differential algebraic group \emph{\`a la Kolchin}.
This is a subgroup of the group of invertible matrices defined by a set of algebraic differential equations.
The differential dimension of this Galois group measures
the hypertranscendence properties of a basis of solutions. We recall that
a function $f$ is hypertranscendental over a field $F$ equipped with a derivation $\partial$
if $F[\partial^n(f), n\geq 0]/F$ is a transcendental extension of infinite degree, or equivalently,
if $f$ is not a solution of a nonlinear algebraic differential equation with coefficients in
$F$.
\par
In \cite{diviziohardouinqMalg}, we combine the Grothendieck conjecture with the differential approach to difference equations
of Hardouin-Singer to obtain a parameterized generic Galois group, arithmetically characterized in the spirit
of Grothendieck-Katz conjecture.
This allows us
to establish the relation between Malgrange-Granier $D$-groupoid for nonlinear $q$-difference equations and
the Galois theory of linear $q$-difference equations, comparing the first one to the parameterized
generic Galois group.
Doing this we build the first path between
Kolchin's theory of linear differential algebraic groups and Malgrange's D-groupoid,
answering a question of B. Malgrange (see [Mal09, page 2]).
It should lead to
an arithmetic approach to integrability in the spirit of
\cite{casaleroques}. Notice that the problem of differential dependency with respect to
the derivation $\frac{d}{dq}$, when $q$ is transcendental, is quite peculiar and is treated in \cite{diviziohardouinPacific}.
\end{trivlist}

\subsubsection*{Acknowledgements.}
We would like to thank D. Bertrand, Z. Djadli, C. Favre, M. Florence, A. Granier, D. Harari, F. Heiderich, A. Ovchinnikov, B. Malgrange, J-P. Ramis, J. Sauloy, M. Singer, J. Tapia, H. Umemura and M. Vaquie for the many discussions on different points of this paper, and the
organizers of the seminars of the universities of Grenoble I, Montpellier, Rennes II, Caen, Toulouse and Bordeaux
that invited us to present the results below at various stages of our work.
\par
We would like to thank the ANR projet Diophante that has made possible a few reciprocal visits,
and the Centre International de Rencontres
Math\'{e}matiques in Luminy for supporting us \emph{via} the Research in pairs program and
for providing a nice working atmosphere.

\section{Notation and definitions}
\label{sec:not}

\begin{parag}{\bfseries The base field.}
Let us consider the field of rational function $k(q)$ with coefficients
in a perfect field $k$.
We fix $d\in]0,1[$ and for any irreducible polynomial $v=v(q)\in k[q]$
we set:
$$
|f(q)|_v=d^{\deg_q v(q)\cdot\ord_{v(q)}f(q)},\,\forall f(q)\in k[q].
$$
The definition of $|~|_v$ extends to $k(q)$ by multiplicativity.
To this set of norms one has to add the $q^{-1}$-adic one, defined on $k[q]$ by:
$$
|f(q)|_{q^{-1}}=d^{-deg_qf(q)}.
$$
Once again, this definition extends by multiplicativity to $k(q)$. Then, the
product formula holds:
$$
\begin{array}{rcl}
\prod_{v\in k[q]\,{\rm irred.}}\l|\frac{f(q)}{g(q)}\r|_v
&=&d^{\sum_v\deg_q v(q)~\l(\ord_{v(q)}f(q)-\ord_{v(q)}g(q)\r)}\\
&=&d^{deg_q f(q)-\deg_q g(q)}\\
&=&\l|\frac{f(q)}{g(q)}\r|_{q^{-1}}^{-1}.\\
\end{array}
$$
For any finite extension $K$ of $k(q)$, we consider the family
$\cP$ of ultrametric norms, that extends the norms defined above, up to equivalence.
We suppose that the norms in $\cP$ are normalized so that the product formula still holds.
We consider the following partition of $\cP$:
\begin{itemize}
\item
the set $\cP_\infty$ of places of $K$ such that the associated
norms extend, up to equivalence, either $|~|_q$ or $|~|_{q^{-1}}$;

\item

the set $\cP_f$ of places of $K$ such that the associated
norms extend, up to equivalence, one of the norms $|~|_v$
for an irreducible $v=v(q)\in k[q]$, $v(q)\neq q$.\footnote{The notation $\cP_f$, $\cP_\infty$ is only psychological,
since all the norms involved here are ultrametric. Nevertheless, there exists
a fundamental difference between the two sets, in fact for any $v\in\cP_\infty$ one has
$|q|_v\neq 1$, while for any $v\in\cP_f$ the $v$-adic norm of $q$ is $1$. Therefore,
from a $v$-adic analytic point of view, a $q$-difference equation has a totally
different nature with respect to the norms in the sets $\cP_f$ or $\cP_\infty$.}
\end{itemize}

Moreover we consider the set $\cC$ of places $v\in\cP_f$
such that $v$ divides a valuation of $k(q)$ having as uniformizer
a factor of a cyclotomic polynomial, other than $q-1$.
Equivalently, $\cC$ is the set
of places $v\in\cP_f$ such that $q$ reduces to a root of unity modulo $v$ of order strictly greater
than $1$.
We will call $v\in\cC$ a cyclotomic place.
\par
Sometimes we will write $\cP_K$, $\cP_{K,f}$, $\cP_{K,\infty}$ and $\cC_K$, to
stress out the choice of the base field.
\end{parag}

\begin{parag}{\bfseries $q$-difference modules.}\label{parag:defqmod}
\label{subsec:submodules}
The field $K(x)$ is naturally a $q$-difference algebra, \ie is equipped with the
operator
$$
\begin{array}{rccc}
\sgq:&K(x)&\longrightarrow&K(x)\\
&f(x)&\longmapsto&f(qx)
\end{array}.
$$
The field $K(x)$ is also equipped with the $q$-derivation
$$
\dq(f)(x)=\frac{f(qx)-f(x)}{(q-1)x},
$$
satisfying a $q$-Leibniz formula:
$$
\dq(fg)(x)=f(qx)\dq(g)(x)+\dq(f)(x)g(x),
$$
for any $f,g\in K(x)$.
\par
More generally, we will consider a field $K$, with a fixed element $q\neq 0$, and
an extension $\cF$ of $K(x)$ equipped with a $q$-difference operator, still called $\sgq$, extending
the action of $\sigma_q$, and with the skew derivation $\dq:=\frac{\sgq-1}{(q-1)x}$.
Typically, in the sequel, we will consider the fields $K(x)$, $K(x^{1/r})$, $r\in\Z_{>1}$, or
$K((x))$.
\par
A $q$-difference module over $\cF$ (of rank $\nu$)
is a finite dimensional $\cF$-vector space $M_{\cF}$ (of dimension $\nu$)
equipped with an invertible $\sgq$-semilinear operator, \ie
$$
\Sgq(fm)=\sgq(f)\Sgq(m),
\hbox{~for any $f\in \cF$ and $m\in M_{\cF}$.}
$$
A morphism of $q$-difference modules over $\cF$ is a morphism
of $\cF$-vector spaces, commuting with the $q$-difference structures
(for more generalities on the topic, \cf \cite{vdPutSingerDifference},
\cite[Part I]{DVInv}
or \cite{gazette}). We denote by $Diff(\cF, \sgq)$ the category of $q$-difference modules over $\cF$.
\par
Let $\cM_{\cF}=(M_{\cF},\Sgq)$ be a $q$-difference module over $\cF$ of rank $\nu$.
We fix a basis $\ul e$ of $M_{\cF}$ over $\cF$ and we set:
$$
\Sgq\ul e=\ul e A,
$$
with $A\in \GL_\nu(\cF)$.
A horizontal vector $\vec y\in \cF^\nu$ with respect to the basis $\ul e$ for
the operator $\Sgq$ is a vector
that verifies $\Sgq(\ul e\vec y)=\ul e\vec y$, \ie $\vec y=A\sgq(\vec y)$. Therefore we call
$$
\sgq(Y)=A_1Y,
\hbox{~with~}A_1=A^{-1},
$$
the ($q$-difference) system associated to $\cM_{\cF}$ with respect to the basis $\ul e$.
Recursively, we obtain a family of higher order $q$-difference systems:
$$
\sgq^n(Y)=A_n Y
\hbox{~and~}\dq^nY=G_nY,
$$
with $A_n \in \GL_\nu(\cF)$
and $G_n \in M_\nu(\cF)$. Notice that:
$$
A_{n+1}= \sgq(A_n)A_1,\,
G_1=\frac{A_1-1}{(q-1)x}
\hbox{~and~}
G_{n+1}=\sgq(G_n)G_1(x)+\dq G_n.
$$
It is convenient to set $A_0=G_0=1$.
Moreover we set $[n]_q=\frac{q^n-1}{q-1}$,
$[n]_q^!=[n]_q[n-1]_q\cdots[1]_q$, $[0]_q^!=1$ and $G_{[n]}=\frac{G_n}{[n]_q^!}$ for any $n\geq 1$.
\end{parag}

\begin{parag}{\bfseries Reduction modulo places of $K$.}\label{parag:redmodK}
In the sequel, we will deal with an arithmetic situation, in the following sense.
We consider the ring of integers $\cO_K$ of $K$, \ie the integral closure of $k[q]$ in $K$, and a $q$-difference algebra
of the form
\beq\label{eq:algebraA}
\cA=\cO_K\l[x,\frac1{P(x)},\frac1{P(qx)},\frac1{P(q^2x)},...\r],
\eeq
for some $P(x)\in\cO_K[x]$.
Then $\cA$ is stable by the action of $\sgq$ and we can consider a free
$\cA$-module $M$ equipped with a semilinear invertible
operator\footnote{We could have asked
that $\Sgq$ is only injective, but then, enlarging the scalar to a
$q$-difference algebra $\cA^\p\subset K(x)$, of the same form as \eqref{eq:algebraA},
we would have obtained an invertible operator. Since we are interested in the reduction of
$\cM$ modulo almost all places of $K$, we can suppose
without loss of generality that $\Sgq$ is invertible.} $\Sgq$.
Notice that $\cM_{K(x)}=(M_{K(x)}=M\otimes_\cA K(x),\Sgq\otimes\sgq)$
is a $q$-difference module over
$K(x)$. We will call $\cM=(M,\Sgq)$ a $q$-difference module over $\cA$.
\par
Any $q$-difference module over $K(x)$ comes from
a $q$-difference module over $\cA$, for a convenient choice of $\cA$.
The reason for considering $q$-difference modules over $\cA$ rather than over $K(x)$,
is that we want to reduce our $q$-difference modules with respect to
the places of $K$, and, in particular, with respect to the cyclotomic
places of $K$.
\par
We denote by $k_v$ the residue field of $K$
with respect to a place $v\in\cP$, $\pi_v$ the uniformizer of $v$ and $q_v$ the
image of $q$ in $k_v$, which is defined for all places $v\in\cP$.
For almost all $v\in\cP_f$ we can consider
the $k_v(x)$-vector space $M_{k_v(x)}=M\otimes_\cA k_v(x)$, with the structure induced
by $\Sgq$.
In this way, for almost all $v\in\cP$, we obtain a $q_v$-difference module
$\cM_{k_v(x)}=(M_{k_v(x)},\Sg_{q_v})$ over $k_v(x)$,
\par
In particular, for almost all $v\in\cC$, we obtain a $q_v$-difference module
$\cM_{k_v(x)}=(M_{k_v(x)},\Sg_{q_v})$ over $k_v(x)$, having the particularity that $q_v$ is
a root of unity, say of order $\kappa_v$. This means that $\sg_{q_v}^{\kappa_v}=1$ and that
$\Sg_{q_v}^{\kappa_v}$ is a $k_v(x)$-linear operator.
The results in \cite[\S2]{DVInv} apply to this situation. We recall some of them.
Since we have:
$$
\sg_{q_v}^{\kappa_v}=1+(q-1)^{\kappa_v} x^{\kappa_v}d_{q_v}^{\kappa_v}
\hskip 10 pt\hbox{and}\hskip 10 pt
\Sg_{q_v}^{\kappa_v}=1+(q-1)^{\kappa_v} x^{\kappa_v}\De_{q_v}^{\kappa_v},
$$
where $\De_{q_v}=\frac{\Sg_{q_v}-1}{(q_v-1)x}$, the following facts are equivalent:
\begin{enumerate}
\item
$\Sg_{q_v}^{\kappa_v}$ is the identity;
\item
$\De_{q_v}^{\kappa_v}$ is zero;
\item
the reduction of $A_{\kappa_v}$ modulo $\pi_v$ is the identity matrix;
\item
the reduction of $G_{\kappa_v}$ modulo $\pi_v$ is zero.
\end{enumerate}

\begin{defn}
If the conditions above are satisfied we say that $\cM$ has \emph{zero $\kappa_v$-curvature (modulo $\pi_v$)}.
We say that $\cM$ has \emph{nilpotent $\kappa_v$-curvature (modulo $\pi_v$)} or \emph{has nilpotent reduction}, if
$\Delta_{q_v}^{\kappa_v}$ is a nilpotent operator or equivalently if
$\Sg_{q_v}^{\kappa_v}$ is a unipotent operator.
\end{defn}

We will use this notion in \S\ref{sec:formalsolution}, while in \S\ref{sec:trivsol}
we will need the following stronger notion.
\end{parag}

\begin{parag}{\bfseries $\kappa_v$-curvatures (modulo $\phi_v$).}
\label{subsec:cyclocurvkappav}
We denote by $\phi_v$ the uniformizer of the cyclotomic place of $k(q)$ induced by $v\in\cC_K$. The ring $\cA\otimes_{\cO_K}\cO_K/(\phi_v)$
is not reduced in general, nevertheless it has a $q$-difference algebra structure
and the results in \cite[\S2]{DVInv} apply again. Therefore we set:

\begin{defn}
A $q$-difference module $\cM$ has \emph{zero $\kappa_v$-curvature (modulo $\phi_v$)} if the operator
$\Sgq^{\kappa_v}$ induces the identity
(or equivalently if
the operator $\De_q^{\kappa_v}$ induces the zero operator)
on the module
$M\otimes_\cA\cA/(\phi_v)$.
\end{defn}

\begin{rmk}
The rational function $\phi_v$ is, up to a multiplicative constant,
a factor of a cyclotomic polynomial for almost all $v$.
It is a divisor of $[\kappa_v]_q$ and $|\phi_v|_v=|[\kappa_v]_q|_v=|[\kappa_v]_q^!|_v$.
\end{rmk}

We recall the definition of the Gauss norm associated to an ultrametric norm $v\in\cP$:
$$
\hbox{for any~}\frac{\sum{a_i}x^i}{\sum{b_j}x^j}\in K(x),\hskip 10 pt
\l|\frac{\sum{a_i}x^i}{\sum{b_j}x^j}\r|_{v,Gauss}=\frac{\sup|a_i|_v}{\sup|b_j|_v}.
$$

\begin{prop}\label{prop:iteratedred}
Let $v\in\cC_K$.
We assume that $\l|G_1(x)\r|_{v,Gauss}\leq 1$.
Then the following assertions are equivalent:
\begin{enumerate}
\item
The module $\cM=(M,\Sgq)$ has zero $\kappa_v$-curvature modulo $\phi_v$.

\item
For any positive integer $n$, we have
$\l|G_{[n]}\r|_{v,Gauss}\leq 1$, \ie
the operator $\frac{\De_q^n}{[n]_q^!}$
induces a well defined operator on $\cM_{k_v(x)}=(M_{k_v(x)},\Sg_{q_v})$.

\end{enumerate}
\end{prop}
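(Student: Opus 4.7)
The plan is to prove the two implications separately.

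For \emph{(2) $\Rightarrow$ (1)}: I would specialize (2) at $n=\kappa_v$. Since $|\phi_v|_v = |[\kappa_v]_q^!|_v$ by the preceding remark, this yields $|G_{\kappa_v}|_{v,\mathrm{Gauss}} \leq |\phi_v|_v$. Combined with the identity $\Sgq^{\kappa_v}-\mathrm{Id} = (q-1)^{\kappa_v}x^{\kappa_v}\Delta_q^{\kappa_v}$ and $|q-1|_v=1$ (as $v$ is cyclotomic of order greater than one), this gives $A_{\kappa_v}\equiv\mathrm{Id}\pmod{\phi_v}$, i.e.\ condition~(1).

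The converse \emph{(1) $\Rightarrow$ (2)} I would treat as a $q$-analogue of Dwork's lemma, adapting the arguments of \cite[\S2]{DVInv} from the $\pi_v$-adic setting at non-cyclotomic places to the $\phi_v$-adic setting at a cyclotomic place; this adaptation is legitimate because $\cA\otimes_{\cO_K}\cO_K/(\phi_v)$, though not reduced in general, still carries a $q$-difference algebra structure, as observed just before the definition. Setting $H_n:=G_{[n]}=G_n/[n]_q^!$, I would argue by strong induction on $n$, relying on two recurrences: the one-step recurrence
$$
[n+1]_q\,H_{n+1}=\sgq(H_n)\,G_1+\dq H_n,
$$
obtained from $G_{n+1}=\sgq(G_n)G_1+\dq G_n$ upon division by $[n+1]_q^!$, and the $k$-step $q$-Leibniz recurrence
$$
\binom{n}{k}_q\,H_n=\sum_{j=0}^{k}\sgq^{k-j}\!\l(\frac{\dq^j H_{n-k}}{[j]_q^!}\r)H_{k-j},
$$
obtained by expanding $\Delta_q^n\vec y = \Delta_q^k(G_{n-k}\vec y)$ via the $q$-Leibniz rule $\dq^k(fg)=\sum_j\binom{k}{j}_q\sgq^{k-j}(\dq^j f)\dq^{k-j}g$ and dividing by $[n]_q^!$. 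The basic estimates are that $\sgq$ preserves the Gauss norm (since $|q|_v=1$), that the $q$-hyperderivatives $\dq^j/[j]_q^!$ do not increase it (because $|\binom{m}{j}_q|_v\leq 1$ always), and that $|[i]_q|_v=1$ iff $\kappa_v\nmid i$.

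With these tools the induction proceeds as follows. For $1\leq n<\kappa_v$, the one-step recurrence combined with $|G_1|_{v,\mathrm{Gauss}}\leq 1$ and the fact that $[n]_q$ is a $v$-unit gives $|H_n|_{v,\mathrm{Gauss}}\leq 1$; at $n=\kappa_v$ this bound is exactly hypothesis~(1); for $n>\kappa_v$ with $\kappa_v\nmid n$, the one-step recurrence again suffices; and for $n=m\kappa_v$ with $m\geq 2$ not a pure power of the residue characteristic $p$ at $v$, I apply the $k$-step recurrence with $k=\kappa_v p^{v_p(m)}$: D\'esarm\'enien's $q$-Lucas theorem combined with classical Lucas yields $\binom{n}{k}_q\equiv\binom{m}{p^{v_p(m)}}\pmod{\phi_v}$, which is a $v$-unit by the choice of exponent, and every $H$-term on the right-hand side is controlled by the inductive hypothesis.

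The hard part will be the degenerate case $m=p^s$ in positive residue characteristic, where every $k$-step recurrence with $0<k/\kappa_v<m$ has $\binom{m}{k/\kappa_v}\equiv 0\pmod p$ and the only non-degenerate choice $k=n$ collapses to the tautology $H_n=H_n$. Here I would exploit the refined Frobenius congruence $A_{p^s\kappa_v}\equiv\mathrm{Id}\pmod{\phi_v^{s+1}}$, derived inductively from $A_{p\kappa_v}(x)=\prod_{j=0}^{p-1}A_{\kappa_v}(q^{j\kappa_v}x)$ and the vanishing of $\binom{p}{i}$ for $1\leq i\leq p-1$ in characteristic $p$, and combine it with a finer $\phi_v$-adic analysis of the Leibniz sums, exhibiting cancellations that match the valuation of $[p^s\kappa_v]_q^!$. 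Setting up this refined bookkeeping in positive residue characteristic is the principal technical obstacle of the proof.
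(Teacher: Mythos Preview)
Your argument for (2)$\Rightarrow$(1) is fine; the paper likewise treats this implication as the trivial one.

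For (1)$\Rightarrow$(2), however, you are taking a far more circuitous route than the paper, and the obstacle you hit at $n=p^s\kappa_v$ is largely self-inflicted. The paper does \emph{not} normalise by $[n]_q^!$ and induct on $H_n=G_{[n]}$. It bounds $|G_m|_{v,\mathrm{Gauss}}$ directly, proving the uniform estimate
\[
|G_m|_{v,\mathrm{Gauss}}\le|\phi_v|_v^{\lfloor m/\kappa_v\rfloor}
\]
by a single induction using only the $\kappa_v$-step Leibniz identity
\[
G_{(n+1)\kappa_v}=\sum_{i=0}^{\kappa_v}\binom{\kappa_v}{i}_q\,\sgq^{\kappa_v-i}\!\bigl(d_q^i\,G_{n\kappa_v}\bigr)\,G_{\kappa_v-i}.
\]
Each summand on the right gains one extra factor of $|\phi_v|_v$ over $|G_{n\kappa_v}|$: from $|G_{\kappa_v}|\le|\phi_v|$ when $i=0$, from $\bigl|\binom{\kappa_v}{i}_q\bigr|=|\phi_v|$ when $0<i<\kappa_v$, and from $|d_q^{\kappa_v}(f)|\le|\phi_v|\,|f|$ when $i=\kappa_v$. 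One then divides by $|[m]_q^!|_v$ only at the very end. There is no $q$-Lucas, no one-step/$k$-step dichotomy, and no special handling of $m=p^s\kappa_v$.

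Your ``hard case'' arises precisely because you carry the denominator $[n]_q^!$ through the induction and must therefore match the $v$-adic valuation of the $q$-factorial step by step; in positive characteristic this valuation jumps at multiples of $p\kappa_v$, which forces your case analysis. Postponing the division to the last line, as the paper does, sidesteps this entirely. (In characteristic $p$ the asserted identity $|[m]_q^!|_v=|\phi_v|_v^{\lfloor m/\kappa_v\rfloor}$ is in fact not literally correct for $m\ge p\kappa_v$, so your instinct that positive characteristic deserves care is not baseless; but the natural fix is a sharpening of the same direct estimate on $G_m$, not the elaborate $q$-Lucas machinery and the unfinished Frobenius-congruence argument you sketch. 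As written, your proposal leaves exactly that case open, so it is incomplete at the one point where it departs from the paper.)
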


\begin{rmk}
Even if $q_v$ is a root of unity,
the family of operators $\frac{d_{q_v}^n}{[n]_{q_v}^!}$
acting on $k_v(x)$ is well defined.
This remark is the starting point
for the theory of iterated $q$-difference modules
constructed in \cite[\S3]{HardouinIterative}.
Then the second assertion of the proposition above can be rewritten as:
\begin{center}
\emph{$\cM_{k_v(x)}$ has a natural structure of iterated
$q_v$-difference module.}
\end{center}
\end{rmk}

\begin{proof}
The only nontrivial implication is ``$1\Rightarrow 2$''
whose proof is quite similar to \cite[Lemma 5.1.2]{DVInv}.
The Leibniz Formula for $\dq$ and $\Dq$ implies that:
$$
G_{(n+1)\kappa_v}=\sum_{i=0}^{\kappa_v}
{\kappa_v\choose i}_q \sgq^{\kappa_v-i} (\dq^i\l(G_{n\kappa_v}\r))
G_{\kappa_v-i},
$$
where ${n\choose i}_q=\frac{[n]_q^!}{[i]_q^![n-i]_q^!}$ for any $n\geq i\geq 0$.
If $\cM$ has zero $\kappa_v$-curvature modulo $\phi_v$ then
$|G_{\kappa_v}|_{v,Gauss}\leq |\phi_v|_v$.
One obtains recursively that
$|G_{m}|_{v,Gauss}\leq |\phi_v|_v^{\l[\frac{m}{\kappa_v}\r]}$,
where we have denoted by $[a]$ the integral part of $a\in\R$,
\ie $[a]=max\{n\in\Z:n\leq a\}$.
Since $|[\kappa_v]_q|_v=|\phi_v|_v$ and $|[m]_q^!|_v=|\phi_v|_v^{\l[\frac{m}{\kappa_v}\r]}$,
we conclude that:
\beq\label{eq:nilpinequality}
\l|\frac{G_m}{[m]_q^!}\r|_{v,Gauss}
\leq 1.
\eeq
\end{proof}
\end{parag}

\section{Regularity of ``global'' nilpotent $q$-difference modules}
\label{sec:formalsolution}

In this section, we are going to prove that a $q$-difference module
is regular singular and has integral exponents if it has nilpotent reduction
for sufficiently many cyclotomic places.
In this setting, and in particular if the characteristic of $k$ is zero,
speaking of global nilpotence is a little bit abusive. Nevertheless, it is
the terminology used in arithmetic differential equations and we think that
it is evocative of the ideas that have inspired what follows.

\begin{defn}\label{defn:regsing}
A $q$-difference module $(M,\Sgq)$ over $\cA$
(or another sub-$q$-difference algebra of $K((x))$)
is said to be regular singular at $0$ if there
exists a basis $\ul e$ of $(M\otimes_\cA K((x)),\Sgq\otimes\sgq)$ over $K((x))$
such that the action of $\Sgq\otimes\sgq$ over $\ul e$ is represented by a
constant matrix $A\in \GL_\nu(K)$.
\end{defn}

\begin{rmk}
It follows from the Frobenius algorithm\footnote{\cf \cite{vdPutSingerDifference} or \cite[\S1.1]{Sfourier}. The algorithm is briefly summarized
also in \cite[\S1.2.2]{SauloyENS} and \cite{gazette}.}, that a $q$-difference module $M_{K(x)}$
over $K(x)$ is regular singular if and only if
there exists a basis $\ul e$ such that $\Sgq\ul e=\ul e A(x)$ with $A(x)\in \GL_\nu(K(x))\cap \GL_n(K[[x]])$.
\end{rmk}

The eigenvalues
of $A(0)$ are called the exponents of $\cM$ at zero. They are well defined
modulo $q^\Z$.
The $q$-difference module $\cM$ is said to be
regular singular \emph{tout court} if it is regular singular
both at $0$ and at $\infty$, \ie after a variable change of the form $x=1/t$.

\par
In the notation of the previous section, we prove the following result,
which is actually an analogue of
\cite[\S13]{KatzTurrittin} (\cf also \cite{DVInv} for a $q$-difference version over a
number field):

\vbox{\begin{thm}\label{thm:FormalSol}~
\begin{enumerate}
\item
If a $q$-difference module $\cM$ over $\cA$ has nilpotent
$\kappa_v$-curvature modulo $\pi_v$ for infinitely many $v\in\cC$ then
it is regular singular.
\item
Let $\cM$ be a $q$-difference module over $\cA$.
If there exists an infinite set of positive primes $\wp\subset\Z$
such that $\cM$
has nilpotent
$\kappa_v$-curvature modulo $\pi_v$ for all $v\in\cC$
such that $\kappa_v\in\wp$,
then $\cM$ is regular singular and
its exponents (at zero and at $\infty$) are all in $q^\Z$.
\end{enumerate}
\end{thm}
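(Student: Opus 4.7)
The plan is to adapt the Katz–Turrittin argument from \cite[\S13]{KatzTurrittin}, following its $q$-difference version over a number field in \cite{DVInv}, to the present setting where $K/k(q)$ is any finite extension with $k$ perfect. Regular singularity at $0$ amounts to exhibiting a formal basis of $\cM\otimes_\cA K((x))$ in which $\Sgq$ is represented by a constant matrix in $\GL_\nu(K)$; regular singularity at $\infty$ follows by the same argument after $x\mapsto 1/t$. The integrality of the exponents in $q^\Z$ of assertion (2) will be recovered \emph{a posteriori} from the abundance of allowed orders $\kappa_v$.

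The first, local step is to extract a uniform estimate at each $v\in\cC$ appearing in the hypothesis. Up to passing to an associated graded of a short exact sequence, which affects neither regular singularity nor the exponents, nilpotent $\kappa_v$-curvature modulo $\pi_v$ can be upgraded to zero $\kappa_v$-curvature modulo $\phi_v$, so Proposition~\ref{prop:iteratedred} applies and yields the Gauss-norm integrality $|G_{[n]}|_{v,\mathrm{Gauss}}\le 1$ for every $n\ge 0$. These estimates at infinitely many cyclotomic places are the sole local input of what follows.

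The second, global step feeds those estimates into the product formula for $K$ recalled in \S\ref{sec:not}. After choosing a cyclic vector, the coefficients of a formal fundamental solution at $x=0$ are polynomial expressions in the values $G_{[n]}(0)$, hence integral at each of the infinitely many $v$ of the hypothesis; the product formula then forces a corresponding Gauss-norm bound at the remaining finitely many places, and in particular at those of $\cP_\infty$. As in \cite[\S5]{DVInv}, this growth control is incompatible with any nonzero slope of the $q$-Newton polygon at $0$ in the sense of Sauloy, hence gives regular singularity at $0$; applying the argument after $x\mapsto 1/t$ settles (1). For (2), once $A$ has a constant representative, the exponents $\lambda_i$ are its eigenvalues modulo $q^\Z$; the hypothesis $\kappa_v\in\wp$ forces the reductions of $\lambda_i^{\kappa_v}$ to lie in $q^\Z$ at each such cyclotomic place, and exhausting the infinite set of primes $\wp$ by a global rigidity argument in $K^\times/q^\Z$ forces $\lambda_i\in q^\Z$.

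I expect the main difficulty to lie in Step 2, namely transferring the \cite{DVInv} argument from the number-field setting, where $\cP_\infty$ consists of archimedean places, to our setting where all the places in $\cP_\infty$ are ultrametric. The identity $|\phi_v|_v=|[\kappa_v]_q|_v$ noted after Proposition~\ref{prop:iteratedred} allows both parts of the theorem to be treated uniformly at the level of local estimates, but the careful bookkeeping of the $q^{-1}$-adic contribution, now playing the role previously played by the archimedean contribution, is where the subtle point lies.
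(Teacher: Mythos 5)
Your proposal diverges substantially from the paper's argument, and there are two genuine gaps.

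First, the step that "upgrades" nilpotent $\kappa_v$-curvature modulo $\pi_v$ to zero $\kappa_v$-curvature modulo $\phi_v$ by passing to an associated graded does not make sense uniformly in $v$. The filtration that trivializes a unipotent operator $\Sg_{q_v}^{\kappa_v}$ is a filtration of the reduced module $M\otimes_\cA k_v(x)$ and depends on $v$; it is not the reduction of a filtration of $\cM$ by sub-$q$-difference modules. So you cannot fix a single graded of $\cM$ and thereby replace the nilpotence hypothesis by the zero-curvature hypothesis of Proposition~\ref{prop:iteratedred} at infinitely many $v$ simultaneously. The paper works directly with nilpotence and never makes this reduction.

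Second, your global step for regular singularity at $0$ cannot be run via the product formula, because the hypothesis of assertion (1) gives estimates only at \emph{infinitely many} $v\in\cC$ — not at almost all places of $K$. The product formula constrains nothing unless the remaining places (which here could be an infinite set) are controlled. (You have in effect imported the argument of Proposition~\ref{prop:boreldwork}, i.e.\ Step~2 in the proof of Theorem~\ref{thm:GrothKaz}, where the hypothesis of zero curvature for \emph{almost all} $v$ does give bounds almost everywhere and the product formula applies; that Borel--Dwork step is not where the present theorem lives.) The paper proves regular singularity by a local, formal-classification argument: pass to the ramified extension $L(t)$ (Lemma~\ref{basis change}), invoke the slope classification to get a basis with $\Sg_{\wtilde q}\ul f=\ul f\,(B_\ell/t^\ell+\text{h.o.t.})$ with $B_\ell$ constant and \emph{nonnilpotent} (Lemma~\ref{rationalgauge}), and then observe that the nilpotent reduction at infinitely many cyclotomic places $w$ forces $B_\ell^{\kappa_w}\equiv 0\bmod \pi_w$, hence $B_\ell$ nilpotent, a contradiction unless $\ell=0$. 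No product formula enters.

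Finally, for assertion (2), you reduce correctly to showing that an eigenvalue $\alpha$ of the constant matrix $A_0$ with $\alpha^{\kappa_v}\equiv 1\bmod \pi_v$ (for all $v\in\cC$ with $\kappa_v\in\wp$) must lie in $q^\Z$, but the phrase ``a global rigidity argument in $K^\times/q^\Z$'' is precisely what has to be proved and is the genuine new content here. The paper's Proposition~\ref{prop:ratexp}, via the elementary Lemma~\ref{lemma:rationalfunctions} (a rational function $f\in k(q)$ such that $f(\zeta_\ell)$ is a root of unity of order $\ell$ for infinitely many primes $\ell$ must be a power of $q$), is a result in rational dynamics and not a routine consequence of a local-global principle; the paper itself points out that this is where the argument differs significantly from the number-field case of \cite{DVInv}. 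Your proposal would need to supply this ingredient.
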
}

\begin{rmk}
The proof of the first part of Theorem \ref{thm:FormalSol}
is inspired by \cite[13.1]{KatzTurrittin} and therefore is quite similar
to \cite[\S6]{DVInv}. On the other hand, the proof of the triviality of the
exponents (\cf Proposition \ref{prop:ratexp} below) has significant differences
with respect to the analogous results on number fields.
In fact in the differential case the proof is based on
Chebotarev density theorem. In \cite{DVInv} it is a consequence on some considerations on
Kummer extensions and Chebotarev density theorem, while in this setting it is
a consequence of Lemma \ref{lemma:rationalfunctions}
below, which can be interpreted as a statement in rational dynamic.
\end{rmk}

The proof of Theorem \ref{thm:FormalSol}
is the object of the following three subsections.

\subsection{Regularity}
\label{subsec:regularity}

We prove the first part of Theorem \ref{thm:FormalSol}.
It is enough to prove that $0$ is a regular singular point for $\cM$,
the proof at $\infty$ being completely analogous.
\par
Let $r\in\N$ be a divisor of $\nu!$ where $\nu$ is the dimension of $M_{K(x)}$ over $K(x)$ and let $L$ be a finite extension of $K$ containing an
element $\wtilde q$
such that $\wtilde q^{\,r}=q$. We consider the field extension $K(x)\hookrightarrow L(t)$, $x\mapsto t^r$.
The field $L(t)$ has a natural structure of
$\wtilde q$-difference algebra extending the $q$-difference structure of $K(x)$.

\begin{lemma}\label{basis change}
The $q$-difference module $\cM$ is regular singular at $x=0$
if and only if
the $\wtilde q$-difference module $\cM_{L(t)}:=(M\otimes_{\cA} L(t),
\Sg_{\wtilde q}:=\Sgq\otimes\sg_{\wtilde q})$ over $L(t)$
is regular singular at $t=0$.
\end{lemma}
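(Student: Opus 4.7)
I would prove each implication separately. The forward direction is essentially formal: if $\cM$ is regular singular at $x=0$, fix a basis $\ul e$ of $M \otimes_\cA K((x))$ over $K((x))$ in which $\Sgq$ acts by a constant matrix $A \in \GL_\nu(K)$. Since $\sg_{\wtilde q}(x) = \wtilde q^{\,r} t^r = qx$, the operator $\sg_{\wtilde q}$ restricts to $\sgq$ on $K((x)) \subset L((t))$, so the basis $\ul e \otimes 1$ of $\cM_{L((t))} = (M \otimes_\cA K((x))) \otimes_{K((x))} L((t))$ represents $\Sg_{\wtilde q}$ by the same constant matrix $A$, now viewed in $\GL_\nu(L)$. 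Hence $\cM_{L(t)}$ is regular singular at $t=0$.

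The reverse direction is the substantive content and amounts to a descent statement for the finite ramified extension $K((x)) \hookrightarrow L((t))$. My preferred approach is to use the Newton polygon characterization of regular singular $q$-difference modules (in the sense of Sauloy, Ramis--Sauloy): a $q$-difference module over $K((x))$ is regular singular iff every slope of its Newton polygon vanishes, and similarly for $\cM_{L(t)}$ over $L((t))$ as a $\wtilde q$-difference module. Under the ramified base change $x = t^r$, $q = \wtilde q^{\,r}$ the Newton slopes rescale by the ramification index $r$, so the two polygons are simultaneously of pure slope zero and the descent follows immediately.

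A more concrete alternative is Galois descent, which I would set up after enlarging $L$ so that $L/K$ is Galois and $L$ contains a primitive $r$-th root of unity, making $L((t))/K((x))$ a finite Galois extension with group $G$. The main obstacle in the direct Galois approach is that the Galois conjugates of a gauge transformation $\tilde F \in \GL_\nu(L((t)))$ producing a constant matrix $B \in \GL_\nu(L)$ satisfy the normalization only up to constant conjugation of $B$, so one cannot apply a Hilbert~90 type averaging to $\tilde F$ directly. Instead I would descend at the level of $\Sg_{\wtilde q}$-stable formal lattices (whose existence is equivalent to regular singularity by the Frobenius algorithm, \cf the Remark following Definition \ref{defn:regsing}), taking $G$-invariants of a suitably normalized $L[[t]]$-lattice $\Lambda \subset \cM_{L((t))}$ and using the module-finiteness of $L[[t]]$ over $K[[x]]$ to conclude that the invariants form a $\Sgq$-stable $K[[x]]$-lattice of full rank in $\cM_{K((x))}$.
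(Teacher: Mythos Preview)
Your proposal is correct. The forward direction matches the paper exactly: both use that $\ul e\otimes 1$ is a basis of $\cM_{L(t)}$ with $\Sg_{\wtilde q}(\ul e\otimes 1)=\Sgq(\ul e)\otimes 1$, so a constant matrix for $\Sgq$ gives one for $\Sg_{\wtilde q}$.

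For the reverse direction the paper is extremely terse: it simply asserts that the implication ``$\cM_{L(t)}$ regular singular $\Rightarrow$ $\cM$ regular singular'' is a consequence of the Frobenius algorithm, citing \cite{vdPutSingerDifference} and \cite{Sfourier}. Your Newton polygon argument is precisely the content that underlies this citation: the formal classification in those references identifies regular singularity with purity of slope zero, and your observation that the slopes rescale by the ramification index $r$ under $x=t^r$, $q=\wtilde q^{\,r}$ (as one checks on the rank one modules $\sgq(e)=x^ne$) is exactly why the property descends. So your first approach is not different from the paper's, but it makes explicit what the paper leaves as a reference.

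Your Galois descent alternative via $\Sg_{\wtilde q}$-stable $L[[t]]$-lattices is a genuinely different route. It has the advantage of being self-contained and not invoking the full slope filtration, and your identification of the obstruction (that conjugates of the normalizing gauge agree only up to a constant, so one should descend lattices rather than gauge transformations) is correct. The trade-off is that it is more laborious and requires enlarging $L$ to make $L((t))/K((x))$ Galois, whereas the slope argument works immediately over any finite $L/K$.
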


\begin{proof}
It is enough to notice that if $\ul e$ is a basis for $\cM$, then $\ul e\otimes 1$ is a basis
for $\cM_{L(t)}$ and $\Sg_{\wtilde q}(\ul e\otimes 1)=\Sgq(\ul e)\otimes 1$.
The other implication is a consequence of the Frobenius algorithm (\cf \cite{vdPutSingerDifference} or
\cite{Sfourier}).
\end{proof}

The next lemma can be deduced from the formal classification of $q$-difference modules
(\cf \cite[Corollary 9 and \S9, 3)]{Praag}, \cite[Theorem 3.1.7]{sauloyfiltration}):

\begin{lemma}\label{rationalgauge}
There exist an extension $L(t)/K(x)$ as above, a basis $\ul f$ of the
$\wtilde q$-difference module $\cM_{L(t)}$ and
an integer $\ell$
such that $\Sg_{\wtilde q}\ul f=\ul f B(t)$, with $B(t)\in \GL_\nu(L(t))$ of the following form:
\beq\label{nonnilp}
\left\{%
\begin{array}{l}
  \ds B(t)=\frac{B_\ell}{t^\ell}+\frac{B_{\ell-1}}{t^{\ell-1}}+\dots, \hbox{as an element of $\GL_\nu(L((t)))$;} \\
  \hbox{$B_\ell$ is a constant nonnilpotent matrix.} \\
\end{array}%
\right.
\eeq
\end{lemma}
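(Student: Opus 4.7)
The plan is to invoke the formal (Newton polygon) classification of $\wtilde q$-difference modules to extract the slope that dominates the $t$-adic valuation, and then to pass from a formal basis to a rational one by truncation.

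First, the slopes of the Newton polygon of $\cM$ are rational numbers with denominators dividing $\nu!$, so choosing $r \mid \nu!$ to be a common denominator and letting $L/K$ be a finite extension containing $\wtilde q = q^{1/r}$, the ramified substitution $x = t^r$ produces a $\wtilde q$-difference module $\cM_{L(t)}$ whose Newton polygon at $t=0$ has only integer slopes. This is precisely the role of the base change $L(t)/K(x)$ introduced before the statement.

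Next, I appeal to the formal classification of $q$-difference modules (Praagman \cite[Corollary 9]{Praag}, Sauloy \cite[Theorem 3.1.7]{sauloyfiltration}) to decompose $\cM_{L((t))} = \cM_{L(t)} \otimes_{L(t)} L((t))$ as a direct sum of pure (isoclinic) submodules $\cN_i$ of integer slopes $s_1 < s_2 < \ldots < s_k$. Each pure component $\cN_i$ admits a formal basis over $L((t))$ in which $\Sg_{\wtilde q}$ is represented by a matrix $t^{s_i} C_i(t)$ with $C_i(t) \in \GL_{n_i}(L[[t]])$ and $C_i(0)$ invertible. Concatenating these formal bases yields a basis $\ul g$ of $\cM_{L((t))}$ in which $\Sg_{\wtilde q}$ is block diagonal; setting $\ell = -s_1$, the leading Laurent coefficient $B_\ell$ of the resulting matrix is block diagonal with the invertible matrix $C_1(0)$ in the first block and zero elsewhere, which is visibly nonnilpotent.

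Finally, the basis $\ul g$ a priori only lives in $\cM_{L((t))}$; I promote it to a rational basis by truncation. Starting from any $L(t)$-basis $\ul e$ of $\cM_{L(t)}$ and writing $\ul g = \ul e P(t)$ with $P(t) \in \GL_\nu(L((t)))$, I replace each entry of $P(t)$ by its polynomial truncation modulo $t^N$ to obtain a polynomial matrix $P_0(t) \in \GL_\nu(L(t))$. For $N$ sufficiently large (larger than the combined pole orders of the matrix of $\Sg_{\wtilde q}$ in the basis $\ul e$ and of $P(t)^{-1}$), the matrix $P_0(t)$ remains invertible and the gauge-transformed matrix $B(t) = P_0(t)^{-1} A(t) \sg_{\wtilde q}(P_0(t))$ agrees with the formal block diagonal matrix modulo a power of $t$ higher than $-\ell$. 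Thus the basis $\ul f = \ul e P_0(t)$ is rational and yields the claimed shape with the same leading coefficient $B_\ell$. The main obstacle is exactly this formal-to-rational approximation: one must verify that the truncation preserves invertibility and does not perturb the most singular Laurent coefficient, which is a routine estimate once the slope decomposition is in hand but is the only nontrivial step beyond invoking the formal classification.
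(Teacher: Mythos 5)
The paper offers no proof of this lemma beyond the sentence preceding it, which asserts that the statement ``can be deduced from the formal classification of $q$-difference modules'' with citations to Praagman and Sauloy. Your argument fleshes out exactly what that deduction requires: the passage to $L(t)$ to make the slopes integral (using the fact that denominators of Newton-polygon slopes divide $\nu!$), the formal isoclinic decomposition giving a block-diagonal representation whose most singular block has invertible leading coefficient, and the truncation of the formal gauge matrix to a Laurent polynomial to land back in $\GL_\nu(L(t))$ while preserving the leading Laurent coefficient. This is the correct route, and the only genuinely nontrivial step is the last one, which you identify and handle properly: one must check that for $N$ large the truncated gauge matrix $P_0(t)$ remains invertible (because $\det P_0$ has the same leading term as $\det P$) and that the error $B(t)-B_{\mathrm{formal}}(t)$ is $O(t^{N-C})$ for a constant $C$ determined by the pole orders of $A$, $P^{-1}$ and $P_0^{-1}$, hence does not disturb the coefficient of $t^{-\ell}$. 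One point worth emphasizing is that you genuinely need the formal \emph{direct-sum} decomposition and not merely the rational slope filtration: an adapted rational basis for the filtration would yield only a block-triangular $B(t)$, and the off-diagonal blocks could make the leading Laurent coefficient nilpotent (as in $\left(\begin{smallmatrix} t^{-2} & t^{-3} \\ 0 & t^{-1} \end{smallmatrix}\right)$). Your use of the formal splitting followed by truncation is precisely what avoids this; the argument is correct and matches the paper's intent.
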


\medskip
\begin{proof}[Proof of the first part of Theorem \ref{thm:FormalSol}]
Let $\cB\subset L(t)$ be a $\wtilde q$-difference algebra over the ring
of integers $\cO_L$ of $L$, of the same form
as \eqref{eq:algebraA}, containing the entries of $B(t)$.
Then there exists a
$\cB$-lattice $\cN$ of $\cM_{L(t)}$ inheriting the $\wtilde q$-difference
module structure from $\cM_{L(t)}$
and having the following properties:\\
1. $\cN$ has nilpotent reduction modulo infinitely many cyclotomic places of $L$;\\
2. there exists a basis $\ul f$ of $\cN$ over $\cA$ such that $\Sg_{\wtilde q}\ul f=\ul f B(t)$ and $B(t)$ verifies (\ref{nonnilp}).
\par
Iterating the operator $\Sg_{\wtilde q}$ we obtain:
$$
\Sg_{\wtilde q}^m(\ul f)
=\ul fB(t)B({\wtilde q}t)\cdots B({\wtilde q}^{m-1}t)
=\ul f\l(\frac{B_\ell^m}{\wtilde q^{\ell m(\ell m-1)\over 2}t^{m\ell}}+h.o.t.\r).
$$
We know that for infinitely many cyclotomic places $w$ of $L$,
the matrix $B(t)$ verifies
\beq\label{uniprel}
\l(B(t)B({\wtilde q}t)\cdots B({\wtilde q}^{\kappa_w-1}t)
-1\r)^{n(w)}\equiv 0\hbox{\ mod $\pi_w$},
\eeq
where $\pi_w$ is an uniformizer of the place $w$,
$\kappa_w$ is the order $\wtilde q$ modulo $\pi_w$
and $n(w)$ is a convenient positive integer.
Suppose that $\ell\neq 0$. Then $B_\ell^{\kappa_w}\equiv 0$ modulo $\pi_w$,
for infinitely many $w$, and hence
$B_\ell$ is a nilpotent matrix, in contradiction with Lemma \ref{rationalgauge}.
So necessarily $\ell=0$.
\par
Finally we have $\Sg_{\wtilde q}(\ul f)=\ul f\l(B_0+h.o.t\r)$. It follows from \eqref{uniprel}
that $B_0$ is actually invertible, which implies that $\cM_{L(t)}$ is regular singular at $0$.
Lemma \ref{basis change} allows to conclude.
\end{proof}

\subsection{Triviality of the exponents}

Let us prove the second part of Theorem \ref{thm:FormalSol}.
We have already proved that $0$ is a regular singularity
for $\cM$. This means that there exists a basis
$\ul e$ of $\cM$ over $K(x)$ such that $\Sgq\ul e=\ul eA(x)$, with
$A(x)\in \GL_\nu(K[[x]])\cap \GL_\nu(K(x))$.
\par
The Frobenius algorithm (\cf \cite[\S1.1.1]{Sfourier})
implies that there exists a shearing transformation $S\in \GL_\nu(K[x,1/x])$,
such that $S(qx)A(x)S(x)^{-1}\in \GL_\nu(K[[x]])\cap \GL_\nu(K(x))$ and that the constant term $A_0$
of $S(x)^{-1}A(x)S(qx)$ has the following properties:
if $\a$ and $\be$ are eigenvalues of $A_0$ and $\a\be^{-1}\in q^\Z$, then $\a=\be$.
So choosing the basis $\ul eS(x)$ instead of $\ul e$, we can assume that $A_0=A(0)$ has
this last property.
\par
Always following the Frobenius algorithm (\cf \cite[\S1.1.3]{Sfourier}),
one constructs recursively the
entries of a matrix $F(x)\in \GL_\nu(K[[x]]))$, with $F(0)=1$,
such that we have $F(x)^{-1}A(x)F(qx)=A_0$.
This means that there exists
a basis $\ul f$ of $\cM_{K((x))}$ such that $\Sgq\ul f=\ul fA_0$.
\par
The matrix $A_0$ can be written as the product of a semi-simple matrix $D_0$
and a unipotent matrix $N_0$.  Since $\cM$ has nilpotent reduction, we deduce from \S \ref{parag:redmodK} that the reduction of $A_{\kappa_v}=A_0^{\kappa_v}$ modulo $\pi_v$
is the identity matrix. Then $D_0$ verifies:
\beq\label{eq:nilpotence}
\hbox{for all $v\in\cC$ such that $\kappa_v\in\wp$,
we have $D_0^{\kappa_v}\equiv 1$ modulo $\pi_v$.}
\eeq
Let $\wtilde K$ be a finite extension of $K$
in which we can find all the eigenvalues of $D_0$. Then
any eigenvalue $\a\in\wtilde K$ of $A_0$ has the property that
$\a^{\kappa_v}=1$ modulo $w$, for all $w\in\cC_{\wtilde K}$, $w\vert v$ and $v$
satisfies \eqref{eq:nilpotence}.
In other words, the reduction modulo $w$ of an eigenvalue $\a$ of $A_0$
belongs to the multiplicative cyclic group generated by the reduction of $q$ modulo $\pi_v$.
\par
To end the proof, we have to prove that
$\a\in q^\Z$.
So we are reduced to prove the proposition below.

\begin{prop}\label{prop:ratexp}
Let $K/k(q)$ be a finite extension and
$\wp\subset\Z$ be an infinite set of positive primes. For any $v\in\cC$, let
$\kappa_v$ be the order of $q$ modulo $\pi_v$, as a root of unity.
\par
If $\a\in K$ is such that $\a^{\kappa_v}\equiv 1$ modulo $\pi_v$ for all
$v\in\cC$ such that $\kappa_v\in\wp$,
then $\a\in q^\Z$.
\end{prop}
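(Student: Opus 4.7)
My strategy is to reduce everything to a lemma about rational functions on $\P^1$ (Lemma \ref{lemma:rationalfunctions}) by extracting a multiplicative relation between $\alpha$ and $q$. I may assume $K=k(q)(\alpha)$ without loss, since replacing $K$ with this subfield preserves the hypothesis (every place of $k(q)(\alpha)$ lifts to one of the original $K$). Set $d=[K:k(q)]$ and let $f(T)\in k(q)[T]$ be the minimal polynomial of $\alpha$. For $\ell\in\wp$ avoiding the finitely many primes where the coefficients of $f$ have poles at $(\Phi_\ell)$ or where the reduction is non-separable, the hypothesis applied at every place above $(\Phi_\ell)$ (in a Galois closure) forces every root of $\bar f_\ell:=f\bmod\Phi_\ell$ in $\overline{k(\zeta_\ell)}$ to lie in $\mu_\ell$; equivalently, $\bar f_\ell\mid T^\ell-1$ in $k(\zeta_\ell)[T]$.

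The rational-function case $d=1$ is Lemma \ref{lemma:rationalfunctions}, whose proof I expect to go as follows. Writing $\alpha=R(q)=P/Q$ in lowest terms, Galois equivariance yields a well-defined $n_\ell\in\Z/\ell$ with $R(\zeta_\ell)=\zeta_\ell^{n_\ell}$ for every primitive $\ell$-th root of unity; representing $n_\ell$ in $(-\ell/2,\ell/2]$ and using the divisibility $\Phi_\ell\mid P(q)-q^{n_\ell}Q(q)$ (or its reciprocal form $\Phi_\ell\mid q^{-n_\ell}P(q)-Q(q)$ when $n_\ell<0$) in $k[q]$, a crude degree comparison with $\deg\Phi_\ell=\ell-1$ forces the divided polynomial to vanish identically for $\ell$ large, so $R(q)=q^{n_\ell}$. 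Consistency across $\ell$ pins down $n_\ell=n$ constant, whence $\alpha=q^n$.

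To rule out $d\ge 2$, I extract a multiplicative relation between $\alpha$ and $q$ geometrically. Let $C$ be the smooth projective curve over $k$ with function field $K$. The morphism $(q,\alpha):C\to\P^1\times\P^1$ has image an irreducible curve $\Gamma\subset\mathbb{G}_m^2$, and the hypothesis furnishes infinitely many torsion points on $\Gamma$ (namely $(q(v),\alpha(v))\in\mu_\ell\times\mu_\ell$ for each place $v$ above $(\Phi_\ell)$ and each prime $\ell\in\wp$). By Lang's theorem on torsion-rich curves in $\mathbb{G}_m^2$---the geometric ``rational dynamics'' incarnation of the lemma---$\Gamma$ must be a translate of a one-dimensional subtorus by a torsion point; since $q$ is non-constant, $\Gamma$ has equation $\alpha^a=\zeta q^b$ for coprime integers $a\ge 1,b$ and a root of unity $\zeta\in\bar k$. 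Raising to the $\ell$-th power at a place with $\kappa_v=\ell\in\wp$, both $\alpha^\ell\equiv 1$ and $q^\ell\equiv 1\pmod{\pi_v}$ give $\zeta^\ell=1$; applied to two distinct primes in $\wp$ this forces $\zeta=1$. If $a\ge 2$ (with $\gcd(a,\mathrm{char}\,k)=1$), then $T^a-q^b$ is irreducible over $k(q)$, but $T^a-\zeta_\ell^b$ factors over $k(\zeta_\ell)$ as $(T-\zeta_\ell^{ba^{-1}})\cdot g_\ell(T)$ with $\deg g_\ell=a-1\ge 1$, producing a place $v'$ of $K$ above $(\Phi_\ell)$ at which $\alpha\bmod v'=\zeta_\ell^{ba^{-1}}\zeta_a^j$ with $j\not\equiv 0\pmod a$. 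Then $(\alpha\bmod v')^\ell=\zeta_a^{j\ell}\ne 1$ contradicts the hypothesis. Hence $a=1$ and $\alpha=q^b\in q^{\Z}$. The main obstacle is the geometric step of passing from the arithmetic hypothesis at cyclotomic places to the multiplicative relation $\alpha^a=\zeta q^b$, which is exactly the content of the rational-dynamics lemma.
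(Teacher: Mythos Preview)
Your treatment of the rational-function case $d=1$ is the same idea as the paper's Lemma~\ref{lemma:rationalfunctions}: a degree comparison between the minimal polynomial of $\zeta_\ell$ over $k$ and $P(q)-q^{n_\ell}Q(q)$; the paper's three-case analysis is just the careful execution of what you call the ``crude degree comparison''.

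For $d\ge 2$ you take a different route from the paper. The paper does not invoke Lang's theorem; it forms the norm $y=\prod_{\varphi}\alpha^{\varphi}\in k(q)$ (after replacing $\alpha$ by a $p^n$-th power to force separability when $\operatorname{char}k=p$), checks that $y$ inherits the hypothesis at cyclotomic places, applies the rational-function lemma to get $y\in q^{\Z}$, then reads off the valuations of $\alpha$ at all places of $k(q,\alpha)$ to conclude $\alpha=c\,q^{s/r}$, and finally feeds this back into the hypothesis to force $c=1$, $r=1$. This is more pedestrian than your geometric picture but is completely characteristic-free.

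Your appeal to Lang's theorem on torsion points of $\mathbb{G}_m^2$ is where the genuine gap lies. That theorem is a characteristic-zero statement; in characteristic $p>0$ it is false, since every $\overline{\F_p}$-point of $\mathbb{G}_m^2$ is torsion of order prime to $p$, so any curve defined over a finite field is ``torsion-rich'' without being a torsion coset. The paper is set over an arbitrary perfect field $k$, and the remark following Lemma~\ref{lemma:rationalfunctions} flags exactly this: the Lang--B\'ezout argument is cited for $k=\C$, and the elementary proof is given precisely so as to cover positive characteristic. As written, then, your passage from the arithmetic hypothesis to a multiplicative relation $\alpha^a=\zeta q^b$ does not go through when $\operatorname{char}k>0$. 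Two smaller points, which would matter even if Lang's step were available: you assume $\gcd(a,\operatorname{char}k)=1$ without justification, and the existence of a place $v'$ at which $\alpha$ reduces to $\zeta_\ell^{ba^{-1}}\zeta_a^{j}$ with $j\ne 0$ requires $T^a-\zeta_\ell^{b}$ to split into at least two factors over the residue field $k_v$, which need not contain $\mu_a$.
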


\begin{rmk}
Let $K=\Q(\wtilde q)$, with $\wtilde q^r=q$, for some integer $r>1$.
If $\wtilde q$ is an eigenvalue of $A_0$ we would be asking that
for infinitely many
positive primes $\ell\in\Z$ there exists a primitive root of unity $\zeta_{r\ell}$ of order ${r\ell}$,
which is also a root of unity of order $\ell$. Of course, this cannot be true, unless
$r=1$.
\end{rmk}

\subsection{Proof of Proposition \ref{prop:ratexp}}
\label{subsec:ratexp}

We denote by $k_0$ either the field of rational numbers $\Q$, if the characteristic of $k$
is zero, or the field with
$p$ elements $\mathbb F_p$, if the characteristic of $k$ is $p>0$.
First of all, let us suppose that $k$ is a finite perfect extension of $k_0$ of degree $d$ and
fix an embedding $k\hookrightarrow\ol k$ of $k$ in its algebraic closure $\ol k$.
In the case of a rational function $f\in k(q)$, Proposition \ref{prop:ratexp}
is a consequence of the following lemma:

\begin{lemma}\label{lemma:rationalfunctions}
Let $[k:k_0]=d<\infty$ and let $f(q)\in k(q)$ be nonzero rational function.
If there exists an infinite set of positive primes $\wp\subset\Z$ with the following property:
\begin{quote}
for any $\ell\in\wp$ there exists a primitive root of unity $\zeta_\ell$ of order $\ell$ such that
$f(\zeta_\ell)$ is a root of unity of order $\ell$,
\end{quote}
then $f(q)\in q^\Z$.
\end{lemma}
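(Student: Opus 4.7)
Write $f=N/D$ in lowest terms in $k[q]$, with $m:=\deg N$, $n:=\deg D$. For each $\ell\in\wp$, $f(\zeta_\ell)$ lies in the cyclic group $\mu_\ell^*$ generated by $\zeta_\ell$, so there exists $a_\ell\in(\Z/\ell\Z)^*$ with $f(\zeta_\ell)=\zeta_\ell^{a_\ell}$; choose the representative $a_\ell\in\{1,\ldots,\ell-1\}$. Using $\zeta_\ell^\ell=1$, we find that $\zeta_\ell$ is a root of the two polynomials
$$
P_\ell(q):=N(q)-q^{a_\ell}D(q)\in k[q]\quad\text{and}\quad\widetilde P_\ell(q):=q^{\ell-a_\ell}N(q)-D(q)\in k[q].
$$
The plan is to force at least one of these polynomials to vanish identically for some $\ell\in\wp$: $P_\ell\equiv 0$, combined with coprimality of $N,D$, forces $D$ constant and $f(q)=q^{a_\ell}$, while $\widetilde P_\ell\equiv 0$ forces $N$ constant and $f(q)=q^{a_\ell-\ell}$; either conclusion gives $f\in q^\Z$.

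The main mechanism is a Galois-theoretic degree squeeze. Since $f\in k(q)$, applying any $\sigma\in\mathrm{Gal}(\bar k/k)$ to $f(\zeta_\ell)=\zeta_\ell^{a_\ell}$ yields $f(\sigma\zeta_\ell)=(\sigma\zeta_\ell)^{a_\ell}$, so every $k$-Galois conjugate of $\zeta_\ell$ is also a root of $P_\ell$ and $\widetilde P_\ell$. Setting $D_\ell:=[k(\zeta_\ell):k]$, this gives $\deg P_\ell\geq D_\ell$ whenever $P_\ell\neq 0$, and likewise for $\widetilde P_\ell$. Combined with the obvious bounds $\deg P_\ell\leq\max(m,a_\ell+n)$ and $\deg\widetilde P_\ell\leq\max(\ell-a_\ell+m,n)$, if both $P_\ell$ and $\widetilde P_\ell$ are nonzero and $\ell$ is large enough that $D_\ell>\max(m,n)$, then $a_\ell\geq D_\ell-n$ and $\ell-a_\ell\geq D_\ell-m$. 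Summing yields $2D_\ell\leq\ell+m+n$, so any $\ell\in\wp$ with $D_\ell>(\ell+m+n)/2$ forces $P_\ell=0$ or $\widetilde P_\ell=0$, and hence $f\in q^\Z$.

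The remaining point, which is the main obstacle, is to produce infinitely many $\ell\in\wp$ satisfying the lower bound $D_\ell>(\ell+m+n)/2$. In characteristic zero this comes for free: the intersection $L:=k\cap\Q^{\mathrm{cyc}}$ is a finite abelian extension of $\Q$ of degree at most $d$, hence by Kronecker--Weber is contained in some fixed $\Q(\zeta_N)$; for any prime $\ell$ coprime to $N$ one then has $k\cap\Q(\zeta_\ell)\subset\Q(\zeta_N)\cap\Q(\zeta_\ell)=\Q$, so $D_\ell=\ell-1$ and the threshold is overwhelmed for all but finitely many primes. Since $\wp$ is infinite, the required subfamily exists. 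In positive characteristic $p$, by contrast, $D_\ell=r_\ell/\gcd(r_\ell,d)$, where $r_\ell$ denotes the multiplicative order of $p$ modulo $\ell$, and no unconditional Artin-type statement guarantees $r_\ell$ to be linear in $\ell$ along the primes of $\wp$. This is the technically most delicate step; one addresses it by enhancing the counting through the Frobenius orbit structure of $\zeta_\ell$, for instance by passing to the norm $\prod_{i=0}^{d-1}f^{\sigma^i}\in\F_p(q)$ and exploiting the simultaneous constraints it imposes on the whole Frobenius orbit, which reduces the problem to a regime where the squeeze can be closed.
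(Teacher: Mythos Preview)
Your approach is essentially the same as the paper's: form the polynomial $N(q)-q^{a_\ell}D(q)$ (the paper calls it $H$) together with a companion obtained by shifting the exponent by $\ell$, observe that $\zeta_\ell$ and all its $k$-conjugates are roots, and use the resulting degree lower bound to force one of the two to vanish identically, yielding $f\in q^\Z$. The paper runs this as a three-case analysis on $H$ (introducing a ``wrapped'' variant $\widetilde H$ in the third case), while you package it as a symmetric squeeze on the pair $(P_\ell,\widetilde P_\ell)$; the content is the same and your organization is arguably cleaner. In characteristic zero your argument is complete and matches the paper's.

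There is, however, a genuine gap: the positive-characteristic case, which you explicitly leave unfinished. Your squeeze requires $D_\ell=[k(\zeta_\ell):k]>(\ell+m+n)/2$, and as you correctly observe, in characteristic $p$ one has $D_\ell=r_\ell/\gcd(r_\ell,d)\le r_\ell$, where $r_\ell$ is the order of $p$ modulo $\ell$; nothing in the hypotheses forces $r_\ell$ to be of size comparable to $\ell$ along the primes of $\wp$. Your proposed remedy via the norm $\prod_i f^{\sigma^i}\in\F_p(q)$ is too sketchy to assess: passing to $\F_p(q)$ does not enlarge the Frobenius orbit of $\zeta_\ell$, which still has only $r_\ell$ elements over $\F_p$, so the same degree obstruction persists for the norm. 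The paper's proof handles this step by asserting that for $\ell$ sufficiently large in $\wp$ the extensions $k$ and $k_0(\mu_\ell)$ are linearly disjoint over $k_0$, and that consequently the minimal polynomial of $\zeta_\ell$ over $k$ is $1+X+\cdots+X^{\ell-1}$, i.e.\ $D_\ell=\ell-1$; in characteristic zero this is exactly your Kronecker--Weber reduction. In characteristic $p$, however, you should be aware that linear disjointness of $k$ and $\F_p(\mu_\ell)$ over $\F_p$ only yields $D_\ell=[\F_p(\zeta_\ell):\F_p]=r_\ell$, so obtaining $D_\ell=\ell-1$ still amounts to $r_\ell=\ell-1$, precisely the Artin-type obstacle you identified. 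Whichever route you take, the positive-characteristic step needs a complete argument rather than a sketch.
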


\begin{rmk}\label{rmk:rationalfunctions}
If $k=\C$ and $y-f(q)$ is irreducible in $\C[q,y]$,
the result can be deduced from \cite[Chapter 8, Theorem 6.1]{LangRootsof1}, whose proof uses B\'ezout theorem.
We give here a totally elementary proof, that holds also in positive characteristic.
\par
Proposition \ref{prop:ratexp} can be rewritten in the language of rational dynamic.
We denote by $\mu_\ell$ the group of root of unity of order $\ell$.
The following assertions are equivalent:
\begin{enumerate}
\item
$f(q)\in k(q)$ satisfies the assumptions of Lemma \ref{lemma:rationalfunctions}.
\item
There exist infinitely many $\ell\in\N$ such that the group
$\mu_\ell$ of roots of unity of order $\ell$ verifies $f(\mu_\ell)\subset\mu_\ell$.
\item
$f(q)\in q^\Z$.
\item
The Julia set of $f$ is the unit circle.
\end{enumerate}
As it was pointed out to us by C. Favre, the equivalence between the last
two assumptions is a particular case of \cite{zdunik}, while the equivalence
between the second and the fourth assumption can be deduced from
\cite{FavreLetelier} or \cite{autissier}.
\end{rmk}

\begin{proof}
Let $f(q)=\frac{P(q)}{Q(q)}$, with $P=\sum_{i=0}^D a_i q^i,
Q=\sum_{i=0}^D b_i q^i\in k[q]$ coprime polynomials of degree less equal to $D$,
and let $\ell$ be a prime such that:
\begin{itemize}
\item $f(\zeta_\ell)\in\mu_\ell$;
\item $2D < \ell-1$.
\end{itemize}
Moreover, since $\wp$ is infinite, we can chose $\ell>>0$ so that
the extensions $k$ and $k_0(\mu_\ell)$ are linearly disjoint over $k_0$.
Since $k$ is perfect, this implies that the minimal polynomial of the primitive $\ell$-th
root of unity $\zeta_\ell$ over $k$ is $\chi(X)=1+X+...+X^{\ell-1}$.
Now let $\kappa\in\{0,\dots,\ell-1\}$ be such that $f(\zeta_\ell)=\zeta_\ell^\kappa$, \ie
$$
\sum_{i=0}^D a_i \zeta_\ell^i=\sum_{i=0}^D b_i \zeta_\ell^{i+\kappa}.
$$
We consider the polynomial $H(q)=\sum_{i=0}^D a_i q^i -\sum_{j=\kappa}^{D+\kappa}b_{j-\kappa}q^j$
and distinguish three cases:
\begin{enumerate}

\item
If $D+\kappa <\ell-1$, then $H(q)$ has $ \zeta_\ell$ as a zero
and has degree
strictly inferior to $\ell-1$. Necessarily $H(q)=0$. Thus we have
$$
a_0=a_1=...=a_{\kappa-1}=b_{D+1-\kappa}=...=b_D=0
\hskip 10pt\hbox{and}\hskip10pt
a_i=b_{i-\kappa}\hbox{~for~}i=\kappa,\dots,D,
$$
which implies $f(q)=q^\kappa$.

\item
If $D+\kappa=\ell-1$ then $H(q)$ is a $k$-multiple of $\chi(q)$
and therefore all the coefficients of $H(q)$ are all equal.
Notice that the inequality $D+\kappa\geq\ell-1$ forces
$\kappa$ to be strictly bigger than $D$, in fact
otherwise one would have $\kappa+D\leq 2D <\ell-1$.
For this reason the coefficients of $H(q)$ of the monomials
$q^{D+1},\dots,q^\kappa$ are all equal to zero.
Thus
$$
a_0=a_1=...=a_D=b_0=...=b_D=0
$$
and therefore $f=0$ against the assumptions.
So the case $D+\kappa=l-1$ cannot occur.

\item
If $D+\kappa>\ell-1$, then $\kappa> D> D+\kappa-\ell$, since $\kappa>D$ and $\kappa-\ell<0$.
In this case we shall rather consider the polynomial $\wtilde H(q)$ defined by:
$$
\wtilde H(q) =\sum_{i=0}^D a_i q^i -
\sum _{i=\kappa}^{\ell-1}b_{i-\kappa}q^i -
\sum_{i=0}^{D+\kappa-\ell}b_{i+\ell-\kappa}q^i.
$$
Notice that $H(\zeta_\ell)=\wtilde H(\zeta_\ell)=0$ and that
$\wtilde H(q)$ has degree smaller or equal than $\ell-1$.
As in the previous case, $\wtilde H(q)$ is a $k$-multiple of $\chi(q)$. We get
$$
b_j=0\hbox{~for~}j=0,...,\ell-1-\kappa
$$
and
$$
a_0-b_{\ell-\kappa}=...=a_{D+\kappa-\ell}-b_D=a_{D+\kappa-\ell+1}=...=a_D=0.
$$
We conclude that $f(q)=q^{\kappa-\ell}$.
\end{enumerate}
This ends the proof.
\end{proof}

We are going to deduce Proposition \ref{prop:ratexp} from Lemma \ref{lemma:rationalfunctions}
in two steps: first of all we are going to show that we can drop
the assumption that $[k:k_0]$ is finite and then that one
can always reduce to the case of a rational function.

\begin{lemma}\label{lemma:ratexp}
Lemma \ref{lemma:rationalfunctions} holds if $k/k_0$ is a finitely generated
(not necessarily algebraic) extension.
\end{lemma}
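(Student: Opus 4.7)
The plan is to bootstrap Lemma \ref{lemma:rationalfunctions} from finite to finitely generated extensions in two successive reductions: first to the case where $k/k_0$ is purely transcendental, and then by induction on the transcendence degree.

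\textbf{Step A (reduction to $k$ purely transcendental).} Since $k_0$ is perfect, $k/k_0$ admits a separating transcendence basis $T_1,\dots,T_n$; setting $\tilde k=k_0(T_1,\dots,T_n)$, the extension $k/\tilde k$ is finite separable, say $k=\tilde k(\alpha)$ of degree $d$, with conjugates $\alpha^{(1)},\dots,\alpha^{(d)}$ of $\alpha$ in a Galois closure $L/\tilde k$. I expand $f=\sum_{i=0}^{d-1}f_i\,\alpha^i$ with $f_i\in\tilde k(q)$. The key point is that for cofinitely many $\ell\in\wp$ the intersection $\tilde k(\alpha)\cap\tilde k(\zeta_\ell)$ equals $\tilde k$: by pigeonhole on the finitely many subfields of $\tilde k(\alpha)$, the intersection coincides with a fixed $F\subset\tilde k(\alpha)$ for infinitely many $\ell$, and taking two such primes $\ell\neq\ell'$ gives $F\subset\tilde k(\zeta_\ell)\cap\tilde k(\zeta_{\ell'})=\tilde k$. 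For every such $\ell$ each embedding $\sigma_j\colon k\to L$, $\alpha\mapsto\alpha^{(j)}$, extends to $k(\zeta_\ell)\to L(\zeta_\ell)$ by fixing $\zeta_\ell$, and applying it to $f(\zeta_\ell)=\zeta_\ell^{\kappa_\ell}$ produces the system
$$\sum_{i=0}^{d-1}f_i(\zeta_\ell)(\alpha^{(j)})^i\;=\;\zeta_\ell^{\kappa_\ell},\qquad j=1,\dots,d,$$
whose Vandermonde coefficient matrix is invertible; hence $f_0(\zeta_\ell)=\zeta_\ell^{\kappa_\ell}$ and $f_i(\zeta_\ell)=0$ for $i\geq 1$. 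As $\ell$ ranges over an infinite set, the functions $f_i$ with $i\geq 1$ must vanish identically, and $f=f_0\in\tilde k(q)$ still satisfies the hypothesis of the lemma.

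\textbf{Step B (induction on $n$ for $k=k_0(T_1,\dots,T_n)$).} The base case $n=0$ is Lemma \ref{lemma:rationalfunctions} itself. For $n\geq 1$, I clear $T_n$-denominators so that $P,Q\in k_0(T_1,\dots,T_{n-1})[T_n,q]$, and specialize $T_n\mapsto t$ for $t\in\overline{k_0}$. Outside the finitely many zeros of the leading $q$-coefficient of $Q$ (viewed as a polynomial in $T_n$), the specialized $f_t:=P_t/Q_t$ is a nonzero element of a field of transcendence degree $n-1$ over $k_0$. The identity $P(\zeta_\ell)=\zeta_\ell^{\kappa_\ell}Q(\zeta_\ell)$ survives the specialization, and $Q_t(\zeta_\ell)\neq 0$ for all but finitely many $\ell$ (since $Q_t$ has finitely many roots in its algebraic closure), so $f_t$ still satisfies the hypothesis. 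The inductive hypothesis then gives $f_t=q^{n(t)}$ for some integer $n(t)\in[-\deg_q Q,\deg_q P]$. Since this range is finite, pigeonhole yields $n_0\in\Z$ with $n(t)=n_0$ for infinitely many good $t$; every coefficient of $P-q^{n_0}Q\in k_0(T_1,\dots,T_{n-1})[T_n,q]$ then vanishes at infinitely many values of $T_n$, hence is identically zero. Therefore $P=q^{n_0}Q$ and $f=q^{n_0}\in q^{\Z}$.

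The main obstacle in both steps is to arrange that the hypothesis survives when passing to a smaller field. In Step A this requires the linear disjointness $\tilde k(\alpha)\cap\tilde k(\zeta_\ell)=\tilde k$ for infinitely many $\ell$, handled by the pigeonhole argument above combined with the classical disjointness of distinct prime cyclotomic extensions over $\tilde k$; in Step B it reduces to the observation that each specialization $t$ spoils only finitely many primes $\ell\in\wp$. Both obstructions are quantitatively finite at every stage, leaving infinitely many $\ell$'s to feed into the Vandermonde argument or the inductive hypothesis.
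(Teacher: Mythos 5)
Your proposal takes a genuinely different route from the paper's, but Step~A has a gap that breaks in positive characteristic.

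You decompose $k/k_0$ as a purely transcendental extension $\tilde k = k_0(T_1,\dots,T_n)$ followed by a finite extension $k=\tilde k(\alpha)$, and for the Vandermonde step you need $k$ and $\tilde k(\zeta_\ell)$ to be linearly disjoint over $\tilde k$ for infinitely many $\ell\in\wp$. To secure this you invoke $\tilde k(\zeta_\ell)\cap\tilde k(\zeta_{\ell'})=\tilde k$ for distinct primes. This holds in characteristic zero (since $\Q(\zeta_\ell)\cap\Q(\zeta_{\ell'})=\Q$ and $\tilde k/\Q$ is regular), but it is false over $\F_p$: with $k_0=\F_2$, $\ell=3$, $\ell'=5$ one has $\F_2(\zeta_3)=\F_4\subset\F_{16}=\F_2(\zeta_5)$, so $\tilde k(\zeta_3)\subsetneq\tilde k(\zeta_5)$ and their intersection contains $\F_4$. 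More to the point, if the algebraic closure of $k_0$ in $k$ is $\F_{p^m}$ with $m>1$, then $k\cap\tilde k(\zeta_\ell)\neq\tilde k$ for the infinitely many primes $\ell$ with $\gcd(m,\ord_\ell(p))>1$, and nothing in the hypothesis keeps $\wp$ away from these primes; your pigeonhole therefore does not produce a single intermediate field $F=\tilde k$, and the full Galois orbit of $\alpha$ over $\tilde k(\zeta_\ell)$ is not available.

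The paper avoids this by decomposing in the opposite order: it takes $\tilde k$ to be the algebraic closure of $k_0$ \emph{inside} $k$ (the constant field), then $k''=\tilde k(a_1,\dots,a_\iota)$ purely transcendental, then $k'=k''(b)$ finite. The key payoff is that $\tilde k$ is automatically algebraically closed in $k'$; hence $k'\cap k''(\zeta_\ell)=k''$ for \emph{every} $\ell$ (any element of the intersection is algebraic over $\tilde k$ and lies in $k'$, hence in $\tilde k\subseteq k''$), and the linear disjointness you wanted comes for free, with no restriction on $\ell$. The monomials in the $a_i$'s (resp.\ the powers of $b$) then remain linearly independent over the cyclotomic extension, $f$ decomposes into components over $\tilde k$, and Lemma~\ref{lemma:rationalfunctions} applies directly — no specialization is needed. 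Your Step~B (specialize one variable, apply the inductive hypothesis, pigeonhole on the exponent $n(t)$) is a serviceable alternative to the paper's direct linear-independence argument, and would work if it were preceded by a correct elimination of the algebraic part; as written, the proof goes through only in characteristic zero.
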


\begin{rmk}
Since $f(q)\in k(q)$, replacing $k$ by the field generated by the coefficients of $f$ over $k_0$, we can always assume that $k/k_0$ is finitely generated.
\end{rmk}

\begin{proof}
Let $\wtilde k$ be the algebraic closure of $k_0$ in $k$
and let $k^\p$ be an intermediate field of $k/\wtilde k$,
such that $f(q)\in k^\p(q)\subset k(q)$ and that $k^\p/\wtilde k$ has minimal
transcendence degree $\iota$.
We suppose that $\iota>0$, to avoid the situation of Lemma \ref{lemma:rationalfunctions}.
So let $a_1,\dots,a_\iota$ be transcendence basis of
$k^\p/\wtilde k$ and let $k^{\p\p}=\wtilde k(a_1,\dots,a_\iota)$.
If $k^\p/\wtilde k$ is purely transcendental, \ie if $k^\p=k^{\p\p}$,
then $f(q)=P(q)/Q(q)$, where $P(q)$ and $Q(q)$ can
be written in the form:
$$
P(q)=\sum_i \sum_{\ul{j}} \a^{(i)}_{\ul{j}} a_{\ul{j}}q^i
\hskip 10 pt\hbox{and}\hskip 10 pt
Q(q)=\sum_i \sum_{\ul{j}} \be^{(i)}_{\ul{j}} a_{\ul{j}}q^i,
$$
with $\ul{j}=(j_1,\dots,j_\iota)\in\Z_{\geq 0}^\iota$,
$a_{\ul{j}}=a_{j_i}\cdots a_{j_\iota}$ and $\a_{\ul j}^{(i)},\be_{\ul j}^{(i)}\in\wtilde k$.
If we reorganize the terms of $P$ and $Q$ so that
$$
P(q)= \sum_{\ul{j}} a_{\ul{j}} D_{\ul{j}}(q)
\hskip 10 pt\hbox{and}\hskip 10 pt
Q(q)= \sum_{\ul{j}} a_{\ul{j}} C_{\ul{j}}(q),
$$
we conclude that the assumption $f(\zeta_\ell)\subset \mu_\ell$ for infinitely many
primes $\ell$
implies that
$f_{\ul{j}}= \frac{ D_{\ul{j}}}{ C_{\ul{j}}}$
is a rational function with coefficients in $\wtilde k$
satisfying the assumptions of Lemma \ref{lemma:rationalfunctions}.
Moreover, since the $f_j$'s take the same values at infinitely many roots of unity, they are all equal.
Finally, we conclude that $f_{\ul j}(q)=q^d$ for any $\ul j$ and hence that
$f= q^d \frac{\sum \alpha_{\ul{j}}}{\sum \alpha_{\ul{j}}}=q^d$.
\par
Now let us suppose that $k^\p=k^{\p\p}(b)$ for some primitive element $b$, algebraic over
$k^{\p\p}$, of degree $e$.
Then once again we write
$f(q)=P(q)/Q(q)$, with:
$$
P(q)=\sum_i \sum_{h=0}^{e-1} \a_{i,h}b^hq^i
\hskip 10 pt\hbox{and}\hskip 10 pt
Q(q)=\sum_i \sum_{h=0}^{e-1} \be_{i,h}b^hq^i,
$$
with $\a_{i,h},\be_{i,h}\in k^{\p\p}$.
Again we conclude that $\frac{\sum_i \a_{i,h}q^i}{\sum_i \be_{i,h}q^i}=q^d$
for any $h=0,\dots,e-1$, and hence that $f(q)=q^d$.
\end{proof}

\begin{proof}[End of the proof of Proposition \ref{prop:ratexp}]
Let $\wtilde K=k(q,f)\subset K$. If the characteristic of $k$ is $p$,
replacing $f$ by a $p^n$-th power of $f$, we can suppose that
$\wtilde K/k(q)$ is a Galois extension.
So we set:
$$
y=\prod_{\varphi\in Gal(\wtilde K/k(q))}f^\varphi\in k(q).
$$
For infinitely many $v\in\cC_{k(q)}$ such that $\kappa_v$ is a prime,
we have $f^{\kappa _v}\equiv 1$ modulo $w$, for any $w\vert v$.
Since $Gal(\wtilde K/K)$ acts transitively over the set of places $w\in\cC_{\wtilde K}$ such that $w\vert v$,
this implies that $y^{\kappa_v}\equiv 1$ modulo $\pi_v$.
Then
Lemmas \ref{lemma:ratexp} and \ref{lemma:rationalfunctions} allow us to conclude that $y\in q^\Z$.
This proves that we are in the following situation: $f$ is an algebraic function
such that $|f|_w=1$ for any $w\in\cP_{\wtilde K,f}$ and that $|f|_w\neq 1$ for any $w\in\cP_{\wtilde K,\infty}$.
We conclude that $f=c q^{s/r}$ for some nonzero integers $s,r$ and some constant $c$ in a finite extension of
$k$.
Since $f^{\kappa_v}\equiv 1$ modulo $w$ for all $w\in\cC_{\tilde K}$ such that $\kappa_v\in\wp$,
we finally obtain that $r=1$ and $c=1$.
\end{proof}

\section{Main result}
\label{sec:trivsol}

In this section we are proving an analogue of the Grothendieck conjecture
on $p$-curvatures under the assumption that $q$ is transcendental. Roughly speaking, we are going to prove that a
$q$-difference module is trivial if and only if its reduction modulo almost all
cyclotomic places is trivial.

\medskip
We say that the $q$-difference module $\cM=(M,\Sgq)$ of rank $\nu$
over a $q$-difference field $\cF$ is trivial
if there exists a basis $\ul f$ of $M$ over $\cF$ such that $\Sgq\ul f=\ul f$.
This is equivalent to ask that the $q$-difference system
associated to $\cM$ with respect to a basis (and hence any basis)
$\ul e$ has a fundamental solution
in $\GL_\nu(\cF)$.
We say that a $q$-difference module $\cM=(M,\Sgq)$ over $\cA$ becomes trivial over
a $q$-difference field $\cF$
over $\cA$ if the $q$-difference module $(M\otimes_\cA\cF,\Sgq\otimes\sgq)$ is trivial.

\begin{thm}\label{thm:GrothKaz}
A $q$-difference module $\cM$ over $\cA$ has zero
$\kappa_v$-curvature modulo $\phi_v$ for almost all $v\in\cC$ if and only if
$\cM$ becomes trivial over $K(x)$.
\end{thm}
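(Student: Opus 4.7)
The direction ``$\cM$ trivial over $K(x)$ $\Rightarrow$ zero $\kappa_v$-curvatures'' is essentially tautological: given a trivialising basis of $\cM_{K(x)}$, the change-of-basis matrix $F\in\GL_\nu(K(x))$ and its inverse are $v$-integral for almost every $v\in\cC$, hence the reduction of this basis modulo $\phi_v$ still trivialises $\Sg_{q_v}$ on $M\otimes_\cA\cA/(\phi_v)$ and in particular kills its $\kappa_v$-th iterate.

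For the converse, assume the vanishing of $\kappa_v$-curvatures modulo $\phi_v$ for almost every $v\in\cC$. Since $(\phi_v)\subseteq(\pi_v)\cO_{K,v}$, the hypothesis implies zero, hence nilpotent, $\kappa_v$-curvature modulo $\pi_v$; moreover every sufficiently large rational prime $\ell$ appears as $\kappa_v$ for some $v\in\cC$ (via the irreducible factors in $k[q]$ of the $\ell$-th cyclotomic polynomial), so Theorem \ref{thm:FormalSol}(2) applies and $\cM$ is regular singular at $0$ and $\infty$ with exponents in $q^\Z$. The Frobenius algorithm together with suitable shearing transformations in $\GL_\nu(K[x,1/x])$ then allows us to replace $\cM$ by an isomorphic $q$-difference module whose matrix $A(x)$ is formally conjugate at $0$ to a constant unipotent matrix $U_0=I+N_0\in\GL_\nu(K)$, and similarly at $\infty$ to $U_\infty=I+N_\infty$.

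Next, I transfer the curvature information to the formal basis at $0$. Let $F_0\in\GL_\nu(K[[x]])$ with $F_0(0)=I$ be the unique formal gauge solving $A(x)F_0(qx)=F_0(x)U_0$. By Proposition \ref{prop:iteratedred} one has $|G_{[n]}|_{v,Gauss}\leq 1$ for every $n$ and almost every $v\in\cC$, which is precisely what is required to cancel the $[n]_q$ denominators of the Frobenius recursion and to conclude that the coefficients of $F_0$ are $v$-integral at such $v$. Iterating the gauge relation gives $A_{\kappa_v}(x)F_0(q^{\kappa_v}x)=F_0(x)U_0^{\kappa_v}$; reducing modulo $\phi_v$ and using $A_{\kappa_v}\equiv I$ and $q^{\kappa_v}\equiv 1$, one obtains $U_0^{\kappa_v}\equiv I\pmod{\phi_v}$. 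Expanding $(I+N_0)^{\kappa_v}-I=\sum_{k=1}^{m-1}\binom{\kappa_v}{k}N_0^k$ where $m$ is the nilpotency index of $N_0$, and multiplying by $N_0^{m-2}$, one isolates $\kappa_v\cdot N_0^{m-1}\equiv 0\pmod{\phi_v}$; as $\kappa_v$ is coprime to $\phi_v$ in $k[q]$, this gives $N_0^{m-1}\equiv 0\pmod{\phi_v}$ for infinitely many $v$, forcing $N_0^{m-1}=0$ in $K$. Descending induction on $m$ then yields $N_0=0$, and the same analysis at infinity gives $N_\infty=0$.

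Finally, I deduce global triviality from formal triviality at $0$ and $\infty$. The identity $A(x)=F_0(x)F_0(qx)^{-1}$ shows that $A$ is regular at $0$ with $A(0)=I$, and symmetrically at $\infty$. Combining the bounds $|G_{[n]}|_{v,Gauss}\leq 1$ at almost every cyclotomic $v\in\cC$ with the corresponding integrality statements at the remaining finite places and the formal data at $0$ and $\infty$, a $q$-analog of the Borel--Dwork rationality criterion in the spirit of the arithmetic arguments of \cite{DVInv} and the canonical-solution analysis of \cite{Sfourier} forces $F_0\in\GL_\nu(K(x))$, producing the desired $K(x)$-trivialisation. I expect the main obstacle to lie precisely in this last step: the bound of Proposition \ref{prop:iteratedred} is cyclotomic-by-cyclotomic and must be converted into a uniform arithmetic statement strong enough to extract the rationality of $F_0$, while the possible singularities of $A$ at finite points other than $0$ and $\infty$ are not directly controlled by the local formal analysis and require an additional product-formula-style argument to handle.
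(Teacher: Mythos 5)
Your plan follows the same two-step route as the paper: Theorem~\ref{thm:FormalSol} to get formal trivialisation at $0$ and $\infty$, then a Borel--Dwork rationality argument. Step~1 as you describe it is essentially the paper's Corollary~\ref{cor:FormalSol} (the paper records the formal diagonalisability via the Jordan decomposition $A_0=D_0N_0$ and Proposition~\ref{prop:ratexp}, where you instead expand $(I+N_0)^{\kappa_v}$ and do a descending induction on the nilpotency index — both work, and yours is a clean way to make the unipotent part vanish). The gap you flag at the end is real, and it is exactly the content of Proposition~\ref{prop:boreldwork}. That proposition makes the Borel--Dwork step precise by estimating the Gauss norms of the $G_{[n]}$ in three blocks of places: for $v\in\cP_\infty$, the solution $Y(x)=\sum G_{[n]}(0)x^n$ has infinite $v$-adic radius of meromorphy because $|q|_v\neq 1$; for noncyclotomic $v\in\cP_f$, $|[n]_q|_{v,Gauss}=1$ gives $|G_{[n]}|_{v,Gauss}\leq 1$ for almost all such $v$ (and a geometric bound $C^n$ for the finitely many exceptions); and for $v\in\cC$, your Proposition~\ref{prop:iteratedred} bound is used for almost all $v$, again with a geometric bound for the finitely many exceptions. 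Together these give $\limsup_m m^{-1}\sum_v\log^+|G_{[m]}|_{v,Gauss}<\infty$, which is the hypothesis of the function-field Borel--Dwork criterion — note the absence of archimedean places is a genuine simplification compared to \cite{DVInv}. Your final worry about singularities of $A$ at finite points other than $0$ and $\infty$ is not an actual obstacle: Borel--Dwork only requires finiteness of the radius of meromorphy, and the $q$-difference system itself provides meromorphic continuation, so such singularities merely propagate along $q$-orbits without affecting the criterion.
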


\begin{rmk}
As proved in \cite[Proposition 2.1.2]{DVInv}, if $\Sgq^{\kappa_v}$ is the identity
modulo $\phi_v$ then the $q_v$-difference module $\cM\otimes_\cA\cA/(\phi_v)$
is trivial.
\end{rmk}

Theorem \ref{thm:GrothKaz} is equivalent to the following statement,
which is a $q$-analogue of the conjecture stated at the very end of
\cite{MatzatPutCrelle}:

\begin{cor}\label{cor:Grothit}
For a $q$-difference module $\cM$ over $\cA$ the following statement are equivalent:
\begin{enumerate}
\item
The $q$-difference module $\cM$ over $\cA$
becomes trivial over $K(x)$;

\item
It induces an iterative
$q_v$-difference structure over $\cM_{k_v(x)}$ for almost all $v\in\cC$;

\item
It induces a trivial iterative
$q_v$-difference structure over $\cM_{k_v(x)}$ for almost all $v\in\cC$.

\end{enumerate}
\end{cor}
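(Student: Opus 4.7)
The plan is to chain together Theorem~\ref{thm:GrothKaz} with Proposition~\ref{prop:iteratedred}, and to handle the triviality of the iterative structure by direct descent of a horizontal basis. I would begin by noting that the Gauss-norm hypothesis $|G_1|_{v,Gauss}\le 1$ of Proposition~\ref{prop:iteratedred} holds for all but finitely many places $v$, since $G_1$ has finitely many entries in $K(x)$, each $v$-integral away from a finite set. In what follows, ``almost all $v\in\cC$'' tacitly excludes this finite set, which is harmless for the statement.

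The implication (2)$\Rightarrow$(1) is then immediate. If the operators $\frac{\De_q^n}{[n]_q^!}$ induce well-defined operators on $\cM_{k_v(x)}$ for all $n$ and almost all $v\in\cC$, Proposition~\ref{prop:iteratedred} tells me that $\cM$ has zero $\kappa_v$-curvature modulo $\phi_v$ for almost all $v\in\cC$, and Theorem~\ref{thm:GrothKaz} delivers the triviality of $\cM_{K(x)}$ over $K(x)$. The implication (3)$\Rightarrow$(2) is tautological, since a trivial iterative structure is in particular an iterative structure.

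For the remaining implication (1)$\Rightarrow$(3), I would argue by descent. Assuming $\cM_{K(x)}$ trivial, fix a horizontal basis $\ul f$ of $M_{K(x)}$ satisfying $\Sgq\ul f=\ul f$, and write $\ul f=\ul e F$ with $F\in \GL_\nu(K(x))$, where $\ul e$ is an $\cA$-basis of $M$. For all but finitely many $v$, both $F$ and $F^{-1}$ have $v$-integral entries, so $\ul f$ descends modulo $\phi_v$ to a basis $\ul f_v$ of $\cM_{k_v(x)}$ on which $\Sg_{q_v}$ acts as the identity. In this basis one has $A_n=I$ and $G_n=0$ for every $n\ge 1$, so each $\frac{\De_q^n}{[n]_q^!}$ mod $\phi_v$ acts coordinatewise as $\frac{d_{q_v}^n}{[n]_{q_v}^!}$, which is precisely the trivial iterative $q_v$-difference structure on $\cM_{k_v(x)}$.

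The substance of the statement lies entirely in Theorem~\ref{thm:GrothKaz}, which I take as given; the present corollary is essentially a translation. The only technical point I foresee is the bookkeeping of which finite set of places needs to be excluded so that the Gauss-norm hypothesis of Proposition~\ref{prop:iteratedred} and the $v$-integrality of $F^{\pm 1}$ both hold. Since only almost all $v\in\cC$ is required throughout, this is not a real obstacle.
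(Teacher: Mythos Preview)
Your proof is correct and follows essentially the same approach as the paper: the cycle $(2)\Rightarrow(1)$ via Proposition~\ref{prop:iteratedred} and Theorem~\ref{thm:GrothKaz}, $(3)\Rightarrow(2)$ tautologically, and $(1)\Rightarrow(3)$ by descending a horizontal basis modulo $\phi_v$ for almost all $v$. The paper phrases the first step as the full equivalence $1\Leftrightarrow 2$ and handles the descent in $(1)\Rightarrow(3)$ by enlarging $\cA$ rather than by your $v$-integrality argument for $F^{\pm1}$, but these are cosmetic differences.
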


\begin{rmk}
The first assertion is equivalent to the fact that the Galois group of $\cM_{K(x)}$ is trivial,
while the fourth assertion is equivalent to the fact that the
iterative Galois group of $\cM_{k_v(x)}$ over $k_v(x)$ is $1$ for almost all $v\in\cC$.
\end{rmk}

\begin{proof}
The equivalence $1\Leftrightarrow 2$ is a consequence of Proposition \ref{prop:iteratedred} and Theorem \ref{thm:GrothKaz},
while the implication $3\Rightarrow 2$ is tautological.
\par
Let us prove that $1\Rightarrow 3$.
If the $q$-difference module $\cM$ becomes trivial over $K(x)$, then
there exist an $\cA$-algebra $\cA^\p$, of the form \eqref{eq:algebraA}, obtained
from $\cA$ inverting a polynomial and its $q$-iterates, and a basis $\ul e$ of
$M\otimes_\cA\cA^\p$ over $\cA^\p$, such that the associated
$q$-difference system is $\sgq(Y)=Y$.
Therefore, for almost all $v \in \cC$, $\cM$ induces an iterative $q_v$-difference module $\cM_{k_v(x)}$
whose iterative $q_v$-difference equations are given by $\frac{d_{q_v}^{\kappa_v}}{[\kappa_v]_{q_v}^!}(Y)=0$
for all $n \in \mathbb{N}$
(\cf \cite[Proposition 3.17]{HardouinIterative}).
\end{proof}

As far as the proof of Theorem \ref{thm:GrothKaz} is regarded,
one implication is trivial. The proof of the other
is divided into steps. So let us suppose that the
$q$-difference module $\cM$ over $\cA$ has zero
$\kappa_v$-curvature modulo $\phi_v$ for almost all $v\in\cC$, then:
\begin{trivlist}
\item{\it Step 1.}
The $q$-difference module $\cM$ becomes trivial over $K((x))$, meaning that
the module $\cM_{K((x))}=(M\otimes_\cA K((x)),\Sgq\otimes\sgq)$ is trivial
(\cf Corollary \ref{cor:FormalSol} below).

\item{\it Step 2.}
There exists a basis $\ul e$ of $\cM_{K(x)}$,
such that the associated $q$-difference system has a fundamental matrix of solution
$Y(x)$ in $\GL(K[[x]])$ whose entries are
Taylor expansions of rational functions (\cf Proposition \ref{prop:boreldwork} below).
\end{trivlist}

\begin{rmk}
Theorem \ref{thm:GrothKaz} is a function field analogue of the main result of
\cite{DVInv}.
Step 1 is inspired by \cite[13.1]{KatzTurrittin} (\cf also
\cite[\S6]{DVInv} for $q$-difference equations over number fields).
The main difference is Proposition \ref{prop:ratexp} proved above.
Step 2 is closed to \cite[\S8]{DVInv} and uses
the Borel-Dwork criteria
(\cf \cite[VIII, 1.2]{AGfunctions}).
\end{rmk}

\subsubsection*{Step 1: triviality over $K((x))$}

The triviality over $K((x))$ is a consequence of Theorem \ref{thm:FormalSol}:

\begin{cor}\label{cor:FormalSol}
If there exists an infinite set of positive primes $\wp\subset\Z$
such that the $q$-difference module $\cM$ over $\cA$ has zero
$\kappa_v$-curvature modulo $\pi_v$
(and \emph{a fortiori} modulo $\phi_v$) for all $v\in\cC$ with
$\kappa_v\in\wp$, then
$\cM$ becomes trivial over the field of formal Laurent series
$K((x))$.
\end{cor}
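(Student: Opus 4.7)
The plan is to combine Theorem \ref{thm:FormalSol} with the observation that zero $\kappa_v$-curvature is strictly stronger than nilpotent $\kappa_v$-curvature; this strengthening will allow me to rule out any residual unipotent part of the formal normal form of $\cM$.

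First, since zero $\kappa_v$-curvature modulo $\pi_v$ implies nilpotent $\kappa_v$-curvature modulo $\pi_v$, Theorem \ref{thm:FormalSol}(2) applies and tells us that $\cM$ is regular singular with all exponents at $0$ lying in $q^\Z$. Next, I would run the Frobenius algorithm together with the shearings of \S\ref{subsec:regularity} to produce a basis $\vec f$ of $\cM_{K((x))}$ on which $\Sgq$ acts by a constant matrix $A_0 \in \GL_\nu(K)$, normalized so that any two eigenvalues of $A_0$ whose ratio lies in $q^\Z$ coincide. Since all eigenvalues are in $q^\Z$, they must all equal a single $q^n$; after one more gauge $\vec g = x^{-n}\vec f$ the matrix becomes $I + M$ with $M \in M_\nu(K)$ nilpotent.

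The crucial step is deducing that $M = 0$, and this is where the full strength of the zero-curvature assumption enters. The total gauge from the initial $\cA$-basis of $\cM$ to $\vec g$ involves only finitely many arithmetic denominators (eigenvalue differences of $A(0)$, shearing coefficients, and so on), so it is $v$-integral for almost all $v$. Combined with $\bar q^{\kappa_v} = 1$ in $k_v$ (which makes $\sgq^{\kappa_v}$ act trivially on $v$-integral power series mod $\pi_v$) and with the hypothesis $A_{\kappa_v}(x) \equiv I \pmod{\pi_v}$, this yields $(I+M)^{\kappa_v} \equiv I \pmod{\pi_v}$ for almost all $v \in \cC$ with $\kappa_v \in \wp$. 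Letting $r$ be the largest integer with $M^r \neq 0$, left multiplication by $M^{r-1}$ collapses the binomial expansion of $(I+M)^{\kappa_v} - I$ to the single term $\kappa_v M^r$, whence $\kappa_v M^r \equiv 0 \pmod{\pi_v}$. As $\wp$ is infinite, discarding at most the characteristic of $k$ ensures $\kappa_v$ is invertible in $k_v$ in all remaining cases, so every entry of $M^r$ reduces to zero modulo infinitely many places of $K$. Since a nonzero element of $K$ cannot vanish at infinitely many places, $M^r = 0$ in $K$, contradicting the maximality of $r$ unless $M = 0$. Thus $A_0 = I$ and $\vec g$ trivializes $\cM_{K((x))}$.

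The hard part will be this third step: transferring the arithmetic hypothesis on $A_{\kappa_v}(x)$ across the formal (and not obviously arithmetic) Frobenius gauge transformation. A cleaner alternative, paralleling the structure of the proof of Theorem \ref{thm:FormalSol}(2), is to argue on the Jordan--Chevalley decomposition $A_0 = D_0 N_0$: that proof already yields $D_0^{\kappa_v} \equiv I \pmod{\pi_v}$ from the weaker nilpotent hypothesis, and the zero-curvature strengthening then forces $N_0^{\kappa_v} \equiv I \pmod{\pi_v}$, from which the nilpotent-power analysis above gives $N_0 = I$, hence $A_0 = D_0$ and the diagonal normal form trivializes after shearing.
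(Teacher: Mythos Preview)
Your ``cleaner alternative'' via the Jordan--Chevalley decomposition $A_0 = D_0 N_0$ is precisely the paper's proof: the zero-curvature hypothesis upgrades the conclusion of \eqref{eq:nilpotence} to $D_0^{\kappa_v}\equiv I$ \emph{and} $N_0^{\kappa_v}\equiv I$ modulo $\pi_v$, whence Proposition~\ref{prop:ratexp} gives exponents in $q^\Z$ and $A_0$ is diagonalisable, so a gauge of the form $Cx^D$ trivialises $\cM_{K((x))}$. Your nilpotent-power argument (multiply by $M^{r-1}$, use that $\kappa_v\in\wp$ is a unit in $k_v$ for almost all $v$) nicely makes explicit the implication $N_0^{\kappa_v}\equiv I \Rightarrow N_0=I$ that the paper leaves to the reader.

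Regarding the concern you flag in your first approach: it actually dissolves without extra work. The constant matrix $A_0$ in the formal normal form is nothing but $A'(0)$, the constant term of the system after the \emph{rational} shearing $S(x)\in\GL_\nu(K[x,1/x])$ described in the proof of Theorem~\ref{thm:FormalSol}(2); the formal Frobenius gauge $F(x)$ only serves to kill the higher-order terms and does not change the constant term. Hence $A_0^{\kappa_v}$ is the value at $x=0$ of the $\kappa_v$-iterate in the sheared basis, and conjugating the zero-curvature identity by the $v$-integral $S(x)$ (using $q^{\kappa_v}\equiv 1$ mod $\pi_v$, so $S(q^{\kappa_v}x)\equiv S(x)$) gives $A_0^{\kappa_v}\equiv I$ directly. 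You never need $v$-integrality of $F(x)$, and your claim that the total gauge has ``only finitely many arithmetic denominators'' (which is false for $F(x)$ in general) is not needed.
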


\begin{proof}
If $\cM$ has zero $\kappa_v$-curvature modulo $\pi_v$
then (\cf \eqref{eq:nilpotence} for notation) we
actually have:
$$
\hbox{for all $v\in\cC$ such that $\kappa_v\in\wp$,
$D_0^{\kappa_v}\equiv 1$ and $N_0^{\kappa_v}\equiv 1$ modulo $\pi_v$,}
$$
where $\Sgq\ul e=\ul e A_0$, for a chosen basis $\ul e$ of $\cM_{K((x))}$ and
a constant matrix $A_0 = D_{0} N_{0} \in \GL_\nu(K)$.
This immediately implies, because of Proposition \ref{prop:ratexp},
that all the exponents are in $q^\Z\subset k(q)\subset K$ and that the
matrix $A_0$ of $\cM$, w.r.t. the $K(x)$-basis $\ul e$, is diagonalisable.
Therefore there exist a diagonal matrix $D$ with coefficients in $\Z$ and a
matrix $C\in \GL_\nu(K)$ such that the basis $\ul e^\p=\ul e C x^D$ of $\cM_{K((x))}$ is
invariant under the action of $\Sgq$.
\end{proof}

\subsubsection*{Step 2: rationality of solutions}

\begin{prop}\label{prop:boreldwork}
If a $q$-difference module $\cM$ over $\cA$ has zero
$\kappa_v$-curvature modulo $\phi_v$ for almost all $v\in\cC$ then
there exists a basis $\ul e$ of $M_{K(x)}$ over $K(x)$ such that
the associated $q$-difference system has a formal fundamental solution
$Y(x)\in \GL_\nu(K((x)))$, which is the Taylor expansion at $0$ of
a matrix in $\GL_\nu(K(x))$,
\ie $\cM$ becomes trivial over $K(x)$.
\end{prop}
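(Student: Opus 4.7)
The plan is to combine Step 1 (triviality over $K((x))$) with the Gauss-norm estimates from Proposition \ref{prop:iteratedred} to produce a formal fundamental solution whose Taylor coefficients meet the hypotheses of a Borel--Dwork-type rationality criterion adapted to the global field $K$.

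First I would invoke Corollary \ref{cor:FormalSol}: since zero $\kappa_v$-curvature modulo $\phi_v$ implies zero $\kappa_v$-curvature modulo $\pi_v$, the module $\cM$ becomes trivial over $K((x))$. By Proposition \ref{prop:ratexp} the exponents at $0$ lie in $q^\Z$, so the Frobenius algorithm produces a basis $\ul e$ of $M_{K(x)}$ with $\Sgq\ul e=\ul e A(x)$, $A(x)\in \GL_\nu(K(x))\cap\GL_\nu(K[[x]])$ and $A(0)=I$, together with $Y(x)\in\GL_\nu(K[[x]])$, $Y(0)=I$, satisfying $\sgq(Y)=A_1 Y$. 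The problem reduces to showing $Y(x)\in\GL_\nu(K(x))$.

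Next I would rewrite the coefficients of $Y$ in terms of the operators $G_{[n]}=G_n/[n]_q^!$ from \S\ref{parag:defqmod}. The identity
\beq
Y(x)=\sum_{n\geq 0}\frac{G_n(0)}{[n]_q^!}\,x^n=\sum_{n\geq 0}G_{[n]}(0)\,x^n
\eeq
follows from the usual $q$-Taylor expansion at $0$ (noting $Y(0)=I$). By Proposition \ref{prop:iteratedred}, for almost every $v\in\cC$ one has $|G_{[n]}|_{v,Gauss}\leq 1$ for all $n$; in particular $|G_{[n]}(0)|_v\leq 1$, so each entry of $Y$ has $v$-adic radius of convergence $\geq 1$ for almost all cyclotomic places. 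The hard part is now to handle the finitely many remaining places: the cyclotomic places where the hypothesis fails, the non-cyclotomic finite places $v\in\cP_f\smallsetminus\cC$, and the places $v\in\cP_\infty$. At each of these one still has $A(x)\in\GL_\nu(K(x))$ with $A(0)=I$, and the recursion $G_{n+1}=\sgq(G_n)G_1+\dq G_n$ furnishes an elementary bound $|G_{[n]}|_{v,Gauss}\leq C_v^n$ in terms of $|G_1|_{v,Gauss}$ and $|q-1|_v$, giving a positive $v$-adic radius of meromorphy $r_v>0$.

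Finally I would invoke the function-field Borel--Dwork rationality criterion (\cite[VIII, 1.2]{AGfunctions}): a formal series $f\in K[[x]]$ is the Taylor expansion of an element of $K(x)$ as soon as the product of its $v$-adic radii of meromorphy over all places of $K$ is at least $1$. The product formula on $K$, applied to the (finitely many) coefficients of $A(x)$ and to the bounds $r_v\geq 1$ obtained above at almost every cyclotomic place, forces the adelic product to satisfy $\prod_v r_v\geq 1$, so each entry of $Y(x)$ is rational. This yields $Y(x)\in\GL_\nu(K(x))$, and hence the triviality of $\cM$ over $K(x)$.

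The main obstacle I anticipate is the quantitative accounting at the ``bad'' places: one must make the elementary estimates at $v\in\cP_\infty$ and at non-cyclotomic $v\in\cP_f$ tight enough to be absorbed by the infinitely many places where the curvature hypothesis gives $r_v\geq 1$, balancing the product formula exactly as in \cite[\S8]{DVInv}. Modulo that bookkeeping, the argument is the same as in the number-field case of \cite{DVInv}, with Proposition \ref{prop:ratexp} replacing the Chebotarev input there.
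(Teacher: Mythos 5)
Your overall plan --- trivialize over $K((x))$ via Corollary \ref{cor:FormalSol}, expand the canonical solution as $Y(x)=\sum_{n\geq 0}G_{[n]}(0)\,x^n$, estimate Gauss norms place by place, and invoke a Borel--Dwork/Andr\'e rationality criterion --- is the same as the paper's, but the place-by-place bookkeeping has two genuine gaps.

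First, you treat $\cP_f\smallsetminus\cC$ as a finite set, bundling it with the finitely many bad cyclotomic places and with $\cP_\infty$. It is in fact infinite: it indexes all irreducible $v(q)\in k[q]$ other than $q$ and the cyclotomic factors. Applying the generic bound $\l|G_{[n]}\r|_{v,Gauss}\leq C_v^n$ with $C_v>1$ at infinitely many places would make $\sum_v\log^+\l|G_{[m]}\r|_{v,Gauss}$ diverge and the criterion would give nothing. What saves the estimate at $v\in\cP_f\smallsetminus\cC$ is the observation, specific to non-cyclotomic $v$, that $\l|[n]_q\r|_v=1$; hence $\l|G_{[n]}\r|_{v,Gauss}=\l|G_n\r|_{v,Gauss}\leq 1$ for every such $v$ at which $\l|G_1\r|_{v,Gauss}\leq 1$, \ie all but finitely many, and only those finitely many exceptions need the weaker $C_v^n$ bound. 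Second, the rationality criterion as you state it (product of radii of meromorphy $\geq 1$) is not sufficient, and the passage ``the product formula forces $\prod_v r_v\geq 1$'' is not a valid deduction: the product formula controls $\prod_v|a|_v$ for a fixed $a\in K^*$, not the $v$-adic radii of convergence of a power series. The missing ingredient is that at every $v\in\cP_\infty$, where $|q|_v\neq 1$, the matrix $Y(x)$ has \emph{infinite} $v$-adic radius of meromorphy: the behaviour of $\l|[n]_q^!\r|_v$ when $|q|_v\neq 1$ gives analyticity of $Y(x)$ at $0$, and the $q$-difference equation itself then furnishes a meromorphic continuation. It is this infinite radius at the places of $\cP_\infty$, together with the finite global size $\limsup_m\frac1m\sum_v\log^+\l|G_{[m]}\r|_{v,Gauss}<\infty$, that makes the simplified Borel--Dwork criterion applicable; your elementary $C_v^n$ estimate at $v\in\cP_\infty$ delivers only a positive radius, which does not suffice.
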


\begin{rmk}
This is the only part of the proof of Theorem \ref{thm:GrothKaz} where we need to suppose that
the $\kappa_v$-curvature are zero
modulo $\phi_v$ \emph{for almost all $v$}.
\end{rmk}

\begin{proof}(\cf \cite[Proposition 8.2.1]{DVInv})
Let $\ul e$ be a basis of $M$ over $K(x)$.
Because of Corollary \ref{cor:FormalSol}, applying a basis change
with coefficients in $K\l[x,\frac 1x\r]$, we can actually suppose that
$\Sgq\ul e=\ul e A(x)$, where $A(x)\in \GL_\nu(K(x))$ has no pole at $0$ and
$A(0)$ is the identity matrix. In the notation of \S\ref{subsec:submodules},
the recursive relation defining the matrices $G_n(x)$ implies that
they have no pole at $0$. This means that $Y(x):=\sum_{n\geq 0}G_{[n]}(0)x^n$
is a fundamental solution of the $q$-difference system
associated to $\cM_{K(x)}$ with respect to the basis $\ul e$,
whose entries verify the following properties:
\begin{itemize}

\item
For any $v\in\cP_\infty$, the matrix $Y(x)$ has infinite $v$-adic radius of meromorphy.
This assertion is a general fact about regular singular $q$-difference systems with $|q|_v\neq 1$.
The proof is based on the estimate of the growth
of the $q$-factorials compared to the growth of $G_n(0)$, which gives the analyticity at $0$,
and on the fact that the
$q$-difference system itself
gives a meromorphic continuation of the solution.

\item
Since $\l|[n]_q\r|_{v,Gauss}=1$ for any noncyclotomic place $v\in\cP_f$,
we have $\l|G_{[m]}(x)\r|_{v,Gauss}\leq 1$ for almost all $v \in \cP_f\setminus \cC$.
For the finitely many $v\in\cP_f$ such that $\l|G_1(x)\r|_{v,Gauss}>1$, there exists a
constant $C>0$ such that $\l|G_{[m]}(x)\r|_{v,Gauss}\leq C^m$, for any positive integer $m$.

\item
For almost all $v\in\cC$ and all positive integer $m$,
$\l|G_{[m]}(x)\r|_{v,Gauss}\leq 1$ (\cf Proposition \ref{prop:iteratedred}), while for
the remaining finitely many $v\in\cC$ there exists a constant $C>0$ such
that $\l|G_{[m]}(x)\r|_{v,Gauss}\leq C^m$ for any positive integer $m$.
\end{itemize}
This implies that:
$$
\limsup_{m\to\infty}\frac{1}{m}
\sum_{v\in\cP}\log^+\l|G_{[m]}(x)\r|_{v,Gauss}=\limsup_{m\to\infty}\frac{1}{m}
\sum_{v\in\cC}\log^+\l|G_{[m]}(x)\r|_{v,Gauss}<\infty.
$$
To conclude that $Y(x)$ is the expansion at zero of a matrix with rational entries
we apply a simplified form of the Borel-Dwork criteria for function fields, which says exactly that
a formal power series having positive radius of convergence for almost all places and
infinite radius of meromorphy at one fixed place is the expansion of a rational function.
The proof in this case is a slight simplification of \cite[Proposition  8.4.1]{DVInv}\footnote{The simplification
comes from the fact that there are no archimedean norms in this setting.},
which is itself a simplification of the more general criteria \cite[Theorem 5.4.3]{Andregrothendieck}.
We are omitting the details.
\end{proof}

\section{Generic Galois group}
\label{sec:genericgaloisgroup}

Let $\cM=(M,\Sgq)$ be a $q$-difference module of rank $\nu$ over $\cA$, as in the previous sections.
Since $M_{K(x)}=(M_{K(x)},\Sgq)$ is a $q$-difference module over $K(x)$, we can consider
the collection $Constr_{K(x)}(\cM_{K(x)})$ of all $q$-difference modules obtained from $\cM_{K(x)}$ by
algebraic construction. This means that we consider the family of $q$-difference modules
containing $\cM_{K(x)}$ and closed under
direct sum, tensor product, dual, symmetric and antisymmetric products.
For the reader convenience, we remind the definition of the duality and the tensor product, from which we can deduce
all the other algebraic constructions:
\begin{itemize}
\item
The $q$-difference structure on the dual $M_{K(x)}^*$ of $M_{K(x)}$ is defined by:
$$
\langle\Sgq^*(m^*),m\rangle=\sgq\l(\langle m^*,\Sgq^{-1}(m)\rangle\r),
$$
for any $m^*\in M_{K(x)}^*$ and any $m\in M_{K(x)}$.
\item
If $\cN_{K(x)}=(N_{K(x)},\Sgq)$, the $q$-difference structure on the tensor product
$M_{K(x)}\otimes_{K(x)}N_{K(x)}$ is defined by
$$
\Sgq(m\otimes n)=\Sgq(m)\otimes\Sgq(n),
$$
for any $m\in M_{K(x)}$ and any $n\in N_{K(x)}$ (\cf for instance \cite[\S9.1]{DVInv}
or \cite[\S2.1.6]{sauloyfiltration}).
\end{itemize}
We will denote $Constr_{K(x)}(M_{K(x)})$ the collection of algebraic constructions of the
$K(x)$-vector space $M_{K_(x)}$, \ie the collection of underlying vector spaces of the family
$Constr_{K(x)}(\cM_{K(x)})$.
Notice that $\GL(M_{K(x)})$ acts naturally, by functoriality, on any element of $Constr_{K(x)}(M_{K(x)})$.

\begin{defn}
The \emph{generic Galois group\footnote{In \cite{andreens} it is
called the \emph{intrinsic} Galois group of $\cM_{K(x)}$.}
$Gal(\cM_{K(x)},\eta_{K(x)})$ of $\cM_{K(x)}$}
is the subgroup of $\GL(M_{K(x)})$ which is the stabiliser of all the $q$-difference submodules over $K(x)$
of any object in $Constr_{K(x)}(\cM_{K(x)})$.
\end{defn}

The group $Gal(\cM_{K(x)},\eta_{K(x)})$ is a tannakian object.
In fact, the full tensor category $\langle\cM_{K(x)}\rangle^\otimes$ generated by
$\cM_{K(x)}$ in $Diff(K(x), \sgq)$ is naturally a tannakian category, when
equipped with the forgetful functor
$$
\eta:\langle\cM_{K(x)}\rangle^\otimes\longrightarrow\{\hbox{$K(x)$-vector spaces}\}.
$$
The functor $Aut^\otimes(\eta)$ defined over the category of
$K(x)$-algebras is representable by the algebraic group $Gal(\cM_{K(x)},\eta_{K(x)})$.
\par
Notice that in positive characteristic $p$, the group
$Gal(\cM_{K(x)},\eta_{K(x)})$ is not necessarily reduced.
An easy example is given by the equation $y(qx)=q^{1/p}y(x)$, whose generic Galois group is $\mu_p$
(\cf \cite[\S7]{vdPutReversatToulouse}).

\begin{rmk}\label{rmk:noetherianity}
We recall that the Chevalley theorem, that also holds for nonreduced groups
(\cf \cite[II, \S2, n.3, Corollary 3.5]{demazuregabriel}),
ensures that $Gal(\cM_{K(x)},\eta_{K(x)})$ can be defined as the
stabilizer of a rank one submodule (which is not necessarily a $q$-difference module) of a
$q$-difference module contained in an algebraic construction of $\cM_{K(x)}$.
Nevertheless, it is possible to find a line that defines $Gal(\cM_{K(x)},\eta_{K(x)})$
as the stabilizer and that is also a $q$-difference module.
In fact the noetherianity of $\GL(M_{K(x)})$ implies that
$Gal(\cM_{K(x)},\eta_{K(x)})$ is defined as the stabilizer of a finite family
of $q$-difference submodules $\cW_{K(x)}^{(i)}=(W_{K(x)}^{(i)},\Sgq)$ contained in some objects
$\cM_{K(x)}^{(i)}$ of $\langle\cM_{K(x)}\rangle^\otimes$.
It follows that the line
$$
L_{K(x)}=\wedge^{\dim \mathop\oplus_i W_{K(x)}^{(i)}} \l(\mathop\oplus_i W_{K(x)}^{(i)}\r)
\subset \wedge^{\dim \mathop\oplus_i W_{K(x)}^{(i)}} \l(\mathop\oplus_i M_{K(x)}^{(i)}\r)
$$
is a $q$-difference module and defines $Gal(\cM_{K(x)},\eta_{K(x)})$
as a stabilizer (\cf \cite[proof of Proposition 9]{Katzbull}).
\par
In the sequel, we will use the notation
$Stab(W_{K(x)}^{(i)}, i)$ to say that a group is the stabilizer
of the set of vector spaces $\{W_{K(x)}^{(i)}\}_i$.
\end{rmk}

Let $G$ be a closed
algebraic subgroup of $\GL(M_{K(x)})$ such that $G=Stab(L_{K(x)})$, for some line
$L_{K(x)}$ contained in an object $\cW_{K(x)}$ of $\langle\cM_{K(x)}\rangle^\otimes$.
The $\cA$-lattice $M$ of $M_{K(x)}$ determines an $\cA$-lattice $L$ of $L_{K(x)}$ and
an $\cA$-lattice $W$ of $W_{K(x)}$. The latter is the underlying space of a $q$-difference module
$\cW=(W,\Sgq)$ over $\cA$.

\begin{defn}\label{defn:contains}
Let $\wtilde\cC$ be a cofinite subset of $\cC_K$ and $(\La_v)_{v\in\wtilde\cC}$
be a family of $\cA/(\phi_v)$-linear operators acting on $M\otimes_\cA\cA/(\phi_v)$.
We say that \emph{the algebraic group
$G\subset \GL(M_{K(x)})$ contains the operators $\La_v$ modulo $\phi_v$ for almost all $v\in\cC_K$}
if for almost all $v\in\wtilde\cC$ the operator $\La_v$ stabilizes
$L\otimes_\cA\cA/(\phi_v)$ inside $W\otimes_\cA\cA/(\phi_v)$:
$$
\La_v\in Stab_{\cA/(\phi_v)}(L\otimes_\cA\cA/(\phi_v)).
$$
\end{defn}

\begin{rmk}
As in \cite[10.1.2]{DVInv}, one can prove that the definition above is independent of the choice of
$\cA$, $M$ and $L_{K(x)}$.
\end{rmk}

The main result of this section is the following:

\begin{thm}\label{thm:genGalois}
The algebraic group $Gal(\cM_{K(x)},\eta_{K(x)})$ is the smallest closed algebraic subgroup of
$\GL(M_{K(x)})$ that contains the operators $\Sgq^{\kappa_v}$ modulo
$\phi_v$, for almost all $v\in\cC$.
\end{thm}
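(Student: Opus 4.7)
The plan is to establish the two inclusions between $Gal(\cM_{K(x)}, \eta_{K(x)})$ and the smallest closed algebraic subgroup $H \subseteq \GL(M_{K(x)})$ that contains $\Sgq^{\kappa_v}$ modulo $\phi_v$ for almost all $v \in \cC$. For the easy inclusion $H \subseteq Gal(\cM_{K(x)}, \eta_{K(x)})$, I would invoke Remark \ref{rmk:noetherianity} to write $Gal(\cM_{K(x)}, \eta_{K(x)}) = Stab(L_G)$ with $L_G$ a line that is simultaneously a $q$-difference submodule of a construction of $\cM_{K(x)}$. After enlarging $\cA$ conveniently, the induced $\cA$-lattice of $L_G$ is stable under $\Sgq$, hence a fortiori under $\Sgq^{\kappa_v}$, and therefore stable modulo $\phi_v$ for almost all $v \in \cC$. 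Definition \ref{defn:contains} then says that $Gal(\cM_{K(x)}, \eta_{K(x)})$ contains $\Sgq^{\kappa_v}$ modulo $\phi_v$ for almost all $v \in \cC$, and the minimality of $H$ yields $H \subseteq Gal(\cM_{K(x)}, \eta_{K(x)})$.

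For the reverse inclusion, I apply Chevalley's theorem via Remark \ref{rmk:noetherianity} to $H$, writing $H = Stab(L_H)$ for some line $L_H$ in a construction $\cW_H \in \langle \cM_{K(x)} \rangle^\otimes$. Since $Gal(\cM_{K(x)}, \eta_{K(x)})$ stabilizes every $q$-difference submodule of every construction by definition, it will suffice to prove that $L_H$ itself is a $q$-difference submodule of $\cW_H$; granted this, we obtain $Gal(\cM_{K(x)}, \eta_{K(x)}) \subseteq Stab(L_H) = H$.

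To prove $\Sgq L_H \subseteq L_H$, my plan is to reduce to Theorem \ref{thm:GrothKaz}. Let $\cL = \sum_{n \geq 0} \Sgq^n L_H \subseteq \cW_H$ denote the smallest $\Sgq$-stable $K(x)$-subspace containing $L_H$; it is automatically a $q$-difference submodule of $\cW_H$. The goal is to construct, starting from $L_H$ inside $\cL$ via tensor, exterior, symmetric, and dual operations (so as to stay in $\langle \cM_{K(x)} \rangle^\otimes$), an auxiliary $q$-difference module $\cY$ which becomes trivial over $K(x)$ if and only if $L_H = \cL$, i.e., if and only if $L_H$ is $\Sgq$-stable. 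A natural candidate is extracted from the iterated wedges $\ell_0 \wedge \Sgq \ell_0 \wedge \cdots \wedge \Sgq^k \ell_0$ in $\wedge^{k+1} \cL$, for a fixed generator $\ell_0$ of $L_H$: these vanish precisely when $L_H$ is $\Sgq$-stable. The hypothesis that $\Sgq^{\kappa_v}$ stabilizes the reduction of $L_H$ modulo $\phi_v$ for almost all $v \in \cC$ will then translate into vanishing of the $\kappa_v$-curvature of $\cY$ modulo $\phi_v$ for almost all $v \in \cC$, and Theorem \ref{thm:GrothKaz} will force $\cY$ to be trivial, whence $L_H$ is $\Sgq$-stable.

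The main obstacle is producing the correct $\cY$ and rigorously verifying its curvature condition. Because $L_H$ is not a priori $\Sgq$-stable, the naive quotient $\cL/L_H$ is not a $q$-difference module, so the obstruction to $\Sgq$-stability cannot be expressed as a simple subquotient; it must instead be captured inside appropriate $\Sgq$-stable tensor and exterior constructions of $\cL$. Moreover, since Definition \ref{defn:contains} is a group-theoretic condition on $H$ rather than a pointwise statement about the operators $\Sgq^{\kappa_v}$, translating it into a pointwise $\kappa_v$-curvature vanishing for $\cY$ will crucially use that $H$ is a group containing those operators, and not merely a family of operators stabilizing $L_H \otimes_\cA \cA/(\phi_v)$.
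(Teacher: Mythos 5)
Your first paragraph (the inclusion $H\subseteq Gal(\cM_{K(x)},\eta_{K(x)})$) is correct and coincides with the paper's Lemma~\ref{lemma:genGalois}. The problem is in the reverse inclusion, where your reduction to the claim ``$L_H$ is a $q$-difference submodule of $\cW_H$'' introduces a gap that the paper's proof deliberately avoids.

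Remark~\ref{rmk:noetherianity} produces a $\Sgq$-stable defining line only for $Gal(\cM_{K(x)},\eta_{K(x)})$, because the construction exploits the fact that $Gal$ is \emph{by definition} a stabilizer of $q$-difference submodules. For the a priori smaller group $H$, Chevalley's theorem delivers a line $L_H$ with $H=Stab(L_H)$, but there is no reason this $L_H$ should be $\Sgq$-stable, and the curvature hypothesis only controls the action of $\Sgq^{\kappa_v}$ on $L_H$ \emph{modulo} $\phi_v$ and only up to a $v$-dependent scalar. Your intermediate goal is therefore strictly stronger than the theorem. Moreover, the auxiliary module $\cY$ built from iterated wedges $\ell_0\wedge\Sgq\ell_0\wedge\dots$ does not inherit the zero-curvature condition: if $\Sgq^{\kappa_v}\ell_0\equiv c_v\ell_0$ mod $\phi_v$, then $\Sgq^{\kappa_v}$ multiplies $\ell_0\wedge\Sgq\ell_0$ by $c_v\sgq(c_v)$ mod $\phi_v$, which is not $1$ in general, so Theorem~\ref{thm:GrothKaz} does not apply to the $q$-difference submodule generated by the wedge line. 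You flag this yourself at the end, but the missing step is precisely the heart of the argument.

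The idea you are missing is the dual-cancellation trick. Let $\cW=(W,\Sgq)$ be the smallest $q$-difference submodule containing $L_H$, and let $m$ generate $L_H$; then $m$ is a cyclic vector of $\cW$, and since $\Sgq^{\kappa_v}m\equiv c_v m$ mod $\phi_v$ while $\Sgq^{\kappa_v}$ commutes with $\Sgq$, the operator $\Sgq^{\kappa_v}$ is \emph{diagonal} in the cyclic basis $(m,\Sgq m,\dots)$ mod $\phi_v$. Passing to $W^*\otimes W$ and taking the line $L'$ spanned by $m^*\otimes m$ (with $m^*$ the dual of $m$ in the cyclic basis), the scalar $c_v$ cancels against $c_v^{-1}$ coming from the dual, so $\Sgq^{\kappa_v}$ acts as the identity on $L'\otimes\cA/(\phi_v)$. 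The smallest $q$-difference submodule $\cW'$ containing $L'$ then has zero $\kappa_v$-curvature mod $\phi_v$ for almost all $v$, hence is trivial by Theorem~\ref{thm:GrothKaz}; by Tannakian functoriality $Gal(\cM_{K(x)},\eta_{K(x)})$ acts trivially on $\cW'$, in particular fixes the rank-one projector $m^*\otimes m$, and anything fixing that projector must preserve its image $L_H$, giving $Gal(\cM_{K(x)},\eta_{K(x)})\subseteq Stab(L_H)=H$. In short, one does not show that $L_H$ is $\Sgq$-stable (it need not be); one manufactures a different line, fixed rather than merely stabilized by the curvatures, whose $q$-difference hull is genuinely trivial.
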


\begin{rmk}
The noetherianity of $\GL(M_{K(x)})$ implies that the smallest closed algebraic subgroup of
$\GL(M_{K(x)})$ that contains the operators $\Sgq^{\kappa_v}$ modulo
$\phi_v$, for almost all $v\in\cC$, is well-defined.
\end{rmk}

A part of Theorem \ref{thm:genGalois} is easy to prove:

\begin{lemma}\label{lemma:genGalois}
The algebraic group $Gal(\cM_{K(x)},\eta_{K(x)})$ contains the operators
$\Sgq^{\kappa_v}$ modulo $\phi_v$ for almost all $v\in\cC_K$.
\end{lemma}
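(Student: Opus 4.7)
The plan is to reduce to the Chevalley-style description of the generic Galois group from Remark \ref{rmk:noetherianity} and then exploit the key fact that $\Sgq^{\kappa_v}$ becomes $\cA/(\phi_v)$-linear upon reduction, so that stability of a lattice under $\Sgq$ translates directly into the desired stability statement.

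More precisely, by Remark \ref{rmk:noetherianity}, I would choose a rank-one $q$-difference submodule $\cL_{K(x)}=(L_{K(x)},\Sgq)$ sitting inside some $q$-difference module $\cW_{K(x)}=(W_{K(x)},\Sgq)$ which is an object of $\langle\cM_{K(x)}\rangle^\otimes$, such that
$$
Gal(\cM_{K(x)},\eta_{K(x)})=Stab(L_{K(x)}).
$$
The $\cA$-lattice $M$ of $M_{K(x)}$ determines, functorially through the tensor/dual constructions, an $\cA$-lattice $W$ of $W_{K(x)}$ stable under the natural semilinear operator extending $\Sgq$; enlarging $\cA$ (inverting finitely many elements and iterating $\sgq$) if necessary, I may assume $L:=L_{K(x)}\cap W$ is an $\cA$-direct summand of $W$ and a free rank-one $\cA$-module, and that $\Sgq$ stabilizes $L$ inside $W$. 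This uses only the definition of $\cL_{K(x)}$ as a $q$-difference submodule and costs a finite number of places, which is harmless.

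Now for every $v\in\cC_K$ outside this finite exceptional set, consider the reduction of $W$ and $L$ modulo $\phi_v$. Since $q_v$ is a root of unity of exact order $\kappa_v$, we have $\sgq^{\kappa_v}\equiv\mathrm{id}$ on $\cA/(\phi_v)$, and therefore $\Sgq^{\kappa_v}$ induces an $\cA/(\phi_v)$-linear endomorphism of $W\otimes_{\cA}\cA/(\phi_v)$. Because $\Sgq$ already sends $L$ into $L$ in $W$ before reduction, $\Sgq^{\kappa_v}$ sends $L\otimes_{\cA}\cA/(\phi_v)$ into itself inside $W\otimes_{\cA}\cA/(\phi_v)$, i.e.
$$
\Sgq^{\kappa_v}\in Stab_{\cA/(\phi_v)}\bigl(L\otimes_\cA\cA/(\phi_v)\bigr).
$$
By Definition \ref{defn:contains}, this is exactly what it means for $Gal(\cM_{K(x)},\eta_{K(x)})=Stab(L_{K(x)})$ to contain $\Sgq^{\kappa_v}$ modulo $\phi_v$.

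There is really no obstacle beyond careful bookkeeping: one must check that the choice of the line $L_{K(x)}$ used in the definition of ``containing modulo $\phi_v$'' is irrelevant, which is exactly the independence statement following Definition \ref{defn:contains} (analogous to \cite[10.1.2]{DVInv}), and one must discard the finite set of places where either the lattice $L$ fails to be well-defined or $\phi_v$ is a unit in $\cA$. The key conceptual point — and the reason the lemma is one-sided compared with Theorem \ref{thm:genGalois} — is simply that $\Sgq$ preserves $q$-difference submodules by definition, and raising to the $\kappa_v$-th power kills the semilinearity precisely at cyclotomic places.
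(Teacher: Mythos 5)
Your proof is correct and follows essentially the same route as the paper's: both hinge on the observation (Remark \ref{rmk:noetherianity}) that the generic Galois group can be realised as the stabiliser of a line which is itself a $q$-difference submodule, hence $\Sgq$-stable, so its lattice reduction modulo $\phi_v$ is automatically preserved by the ($\cA/(\phi_v)$-linear) operator $\Sgq^{\kappa_v}$. The paper's proof is a two-line version of this argument; your extra bookkeeping about lattices, discarding finitely many places, and independence of the chosen line is all implicit in the paper's ``almost all $v$'' and in Definition \ref{defn:contains} and the remark following it.
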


\begin{proof}
The statement follows immediately from the fact that $Gal(\cM_{K(x)},\eta_{K(x)})$
can be defined as the stabilizer of a rank one $q$-difference module
in $\langle\cM_{K(x)}\rangle^\otimes$, which is \emph{a fortiori} stable by the action
of $\Sgq^{\kappa_v}$.
\end{proof}

\begin{cor}\label{cor:trivialgroup}
$Gal(\cM_{K(x)},\eta_{K(x)})=\{1\}$ if and only if
$\cM_{K(x)}$ is a trivial $q$-difference module.
\end{cor}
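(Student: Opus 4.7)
The corollary will follow as a direct formal consequence of Theorem \ref{thm:genGalois} together with the triviality criterion provided by Theorem \ref{thm:GrothKaz}. The plan is to treat the two implications separately, in each case reducing the question about the generic Galois group to a statement about the $\kappa_v$-curvatures modulo $\phi_v$.

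For the implication $\cM_{K(x)}$ trivial $\Rightarrow Gal(\cM_{K(x)},\eta_{K(x)})=\{1\}$, I would start by choosing a horizontal basis $\ul f$ of $M_{K(x)}$, so that $\Sgq\ul f=\ul f$. In this basis the matrix of $\Sgq$ is the identity, and the same holds for the induced action of $\Sgq$ on every object of $Constr_{K(x)}(\cM_{K(x)})$ (tensor products, duals, symmetric and antisymmetric powers). In particular, for almost all $v\in\cC$, the operator $\Sgq^{\kappa_v}$ reduces to the identity modulo $\phi_v$ on every construction, and therefore trivially stabilizes every $\cA/(\phi_v)$-lattice inside any such construction. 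In the sense of Definition \ref{defn:contains}, the trivial subgroup $\{1\}\subset \GL(M_{K(x)})$ already contains $\Sgq^{\kappa_v}$ modulo $\phi_v$ for almost all $v\in\cC$. Since Theorem \ref{thm:genGalois} describes $Gal(\cM_{K(x)},\eta_{K(x)})$ as the \emph{smallest} closed algebraic subgroup with this property, we conclude $Gal(\cM_{K(x)},\eta_{K(x)})=\{1\}$.

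For the converse, assume $Gal(\cM_{K(x)},\eta_{K(x)})=\{1\}$. Theorem \ref{thm:genGalois} then says that the trivial subgroup contains $\Sgq^{\kappa_v}$ modulo $\phi_v$ for almost all $v\in\cC$. I would unpack Definition \ref{defn:contains} using a line $L_{K(x)}$ inside some algebraic construction of $\cM_{K(x)}$ whose $\GL(M_{K(x)})$-stabilizer is precisely $\{1\}$, as supplied by the Chevalley-type argument recalled in Remark \ref{rmk:noetherianity}. The assumption forces $\Sgq^{\kappa_v}$ modulo $\phi_v$ to stabilize the reduction $L\otimes_\cA\cA/(\phi_v)$; and since the stabilizer of $L_{K(x)}$ is trivial at the generic point, the triviality of the stabilizer persists for the reduction modulo $\phi_v$ for almost all $v$, so that $\Sgq^{\kappa_v}$ reduces to the identity modulo $\phi_v$. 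Equivalently, $\cM$ has zero $\kappa_v$-curvature modulo $\phi_v$ for almost all $v\in\cC$, and Theorem \ref{thm:GrothKaz} then delivers the triviality of $\cM_{K(x)}$ over $K(x)$.

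The most delicate point of the plan is the ``spreading out'' step in the second direction: passing from the fact that $\Sgq^{\kappa_v}\bmod\phi_v$ lies in the stabilizer of the reduction of a faithful line to the stronger conclusion that it reduces to the identity modulo $\phi_v$. This should be handled by a generic-faithfulness argument for closed subschemes of $\GL$, at the cost of discarding an additional finite set of places, which is harmless since the conclusion of Theorem \ref{thm:GrothKaz} already tolerates finitely many exceptions.
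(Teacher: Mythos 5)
Your overall plan is sound — reduce both implications to statements about the $\kappa_v$-curvatures and invoke Theorems~\ref{thm:genGalois} and~\ref{thm:GrothKaz} — but both halves differ from the paper, and the ``delicate point'' you flag in the converse is a genuine issue that your proposed fix does not close.

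For the implication ``$\cM_{K(x)}$ trivial $\Rightarrow Gal=\{1\}$,'' you route through curvatures and the minimality statement of Theorem~\ref{thm:genGalois}. That is valid, but it is not what the paper does, and it is heavier than necessary. The paper argues directly from the definition of $Gal(\cM_{K(x)},\eta_{K(x)})$ as a stabiliser: if $\cM_{K(x)}\cong(K^\nu\otimes_K K(x),1\otimes\sgq)$, then for each $v\in K^\nu$ the line $K(x)(v\otimes 1)$ is a sub-$q$-difference module (in fact a copy of the unit object), and the generic Galois group is forced to act trivially on it; since these lines span $M_{K(x)}$, the group is trivial. Note also that Theorem~\ref{thm:genGalois} is \emph{proved after} the corollary in the paper's exposition, so the paper's direct argument keeps the logical dependencies cleaner, even though there is no actual circularity in using the theorem.

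For the converse, both you and the paper pass through Lemma~\ref{lemma:genGalois} and then apply Theorem~\ref{thm:GrothKaz}. You correctly identify the crux: Lemma~\ref{lemma:genGalois} and Definition~\ref{defn:contains} only say that $\Sgq^{\kappa_v}$ \emph{stabilises} the reduction of a defining line $L$, while Theorem~\ref{thm:GrothKaz} needs $\Sgq^{\kappa_v}$ to induce the \emph{identity} on $M\otimes_\cA\cA/(\phi_v)$. Your proposed repair --- a ``generic-faithfulness'' or spreading-out argument, discarding finitely many $v$ --- is not reliable here: $\cA/(\phi_v)$ is an algebra of positive Krull dimension rather than a field, so the bad locus where $\mathrm{Stab}(L)$ jumps above $\{1\}$ is a positive-codimension closed subset of $\mathrm{Spec}(\cA)$ that can meet $V(\phi_v)$ for \emph{infinitely many} $v$ (e.g. a component of the form $V(x-a)$ meets every $V(\phi_v)$). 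The correct handle is the independence-of-choice statement recorded in the remark after Definition~\ref{defn:contains}: one is free to choose a convenient line that defines $\{1\}$ as a stabiliser. For instance, take $L=K(x)\cdot(e_1,\dots,e_\nu,1)\subset M_{K(x)}^{\oplus\nu}\oplus\mathbf{1}$, where $\ul e$ is a basis of $M$ and $\mathbf{1}$ is the unit object. The $\GL(M_{K(x)})$-stabiliser of this line is exactly $\{1\}$, and crucially the condition ``$\Sgq^{\kappa_v}$ stabilises $L\otimes_\cA\cA/(\phi_v)$'' unwinds, using the last coordinate, to $A_{\kappa_v}\equiv 1$ modulo $\phi_v$ with no room for the stabiliser to grow in the fibre. (This is also the mechanism behind the line $L^\p$ in $W^*\otimes W$ used in the proof of Theorem~\ref{thm:genGalois}.) With that replacement your converse becomes a complete proof, matching the paper's.
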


\begin{proof}
Because of the lemma above,
if $Gal(\cM_{K(x)},\eta_{K(x)})=\{1\}$ is the trivial group, then $\Sgq^{\kappa_v}$
induces the identity on $M\otimes_\cA\cA/(\phi_v)$. Therefore Theorem \ref{thm:GrothKaz}
implies that $\cM_{K(x)}$ is trivial.
On the other hand, if $\cM_{K(x)}$ is trivial, then it is isomorphic to the
$q$-difference module $(K^\nu\otimes_K K(x),1\otimes \sgq)$.
It follows that the generic Galois group
$Gal(\cM_{K(x)},\eta_{K(x)})$ is forced to stabilize
all the lines generated by vectors of the type $v\otimes 1$, with $v\in K^\nu$. Therefore
it is the trivial group.
\end{proof}

Now we are ready to give the proof of Theorem \ref{thm:genGalois}, whose main ingredient is Theorem
\ref{thm:GrothKaz}. The argument
is inspired by \cite[\S X]{Katzbull}.

\begin{proof}[Proof of Theorem \ref{thm:genGalois}]
Lemma \ref{lemma:genGalois} says that $Gal(\cM_{K(x)},\eta_{K(x)})$ contains the smallest subgroup $G$
of $\GL(M_{K(x)})$ that contains the operator
$\Sgq^{\kappa_v}$ modulo $\phi_v$ for almost all $v\in\cC_K$.
Let $L_{K(x)}$ be a line contained in some object of the category $\langle\cM_{K(x)}\rangle^\otimes$,
that defines $G$ as a stabilizer. Then there exists a smaller $q$-difference
module $\cW_{K(x)}$ over $K(x)$ that contains $L_{K(x)}$. Let $L$ and $\cW=(W,\Sgq)$
be the associated $\cA$-modules.
Any generator $m$ of $L$ as an $\cA$-module is a cyclic vector for $\cW$ and the operator $\Sgq^{\kappa_v}$
acts on $W\otimes_\cA\cA/(\phi_v)$ with respect to the basis induced by the cyclic basis
generated by $m$ via a diagonal matrix.
Because of the definition of the $q$-difference structure on the dual module
$\cW^*$ of $\cW$, the group
$G$ can be define as the subgroup of $\GL(M_{K(x)})$ that fixes
a line $L^\p$ in $W^*\otimes W$,
\ie such that $\Sgq^{\kappa_v}$ acts as the identity on $L^\p\otimes_\cA\cA/(\phi_v)$,
for almost all cyclotomic places $v$.
It follows from Theorem \ref{thm:GrothKaz} that
the minimal submodule $\cW^\p$ that contains $L^\p$ becomes trivial over $K(x)$.
Since the tensor category generated by $\cW^\p_{K(x)}$
is contained in $\langle\cM_{K(x)}\rangle^\otimes$,
we have a functorial surjective group morphism
$$
Gal(\cM_{K(x)},\eta_{K(x)})\longrightarrow Gal(\cW^\p_{K(x)},\eta_{K(x)})=\{1\}.
$$
We conclude that $Gal(\cM_{K(x)},\eta_{K(x)})$ acts trivially over $\cW^\p_{K(x)}$, and
therefore that $Gal(\cM_{K(x)},\eta_{K(x)})$ is contained in $G$.
\end{proof}

\begin{cor}
Theorem \ref{thm:GrothKaz} and Theorem \ref{thm:genGalois} are equivalent.
\end{cor}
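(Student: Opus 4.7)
The plan is to check both implications, one of which is already done. The implication Theorem~\ref{thm:GrothKaz} $\Rightarrow$ Theorem~\ref{thm:genGalois} is precisely the content of the proof of Theorem~\ref{thm:genGalois} presented immediately above the corollary, so nothing new has to be added in that direction. The task reduces to proving that Theorem~\ref{thm:genGalois} implies Theorem~\ref{thm:GrothKaz}, and in particular its nontrivial implication: if $\cM$ has zero $\kappa_v$-curvature modulo $\phi_v$ for almost all $v\in\cC$, then $\cM_{K(x)}$ is trivial. (The reverse implication of Theorem~\ref{thm:GrothKaz} is transparent: a $\Sgq$-horizontal basis of $\cM_{K(x)}$ spreads out, after inverting finitely many more polynomials as in \eqref{eq:algebraA}, to a $\Sgq$-horizontal basis over an enlarged $\cA'$, in which $\Sgq^{\kappa_v}$ reduces to the identity modulo $\phi_v$ for every admissible $v$.)

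For the nontrivial implication I would argue as follows. Under the hypothesis that $\Sgq^{\kappa_v}$ reduces to the identity on $M\otimes_\cA\cA/(\phi_v)$ for almost all $v\in\cC$, the trivial subgroup $\{1\}\subset \GL(M_{K(x)})$ tautologically contains each of the operators $\Sgq^{\kappa_v}$ modulo $\phi_v$ in the sense of Definition~\ref{defn:contains}: indeed, after restriction of $\cA$ to a smaller $q$-difference algebra of the form \eqref{eq:algebraA}, the distinguished line $L$ constructed in Remark~\ref{rmk:noetherianity} is stabilized by the identity modulo $\phi_v$. By the minimality statement in Theorem~\ref{thm:genGalois} we then obtain the inclusion
\[
Gal(\cM_{K(x)},\eta_{K(x)})\subset\{1\},
\]
hence $Gal(\cM_{K(x)},\eta_{K(x)})=\{1\}$.

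It remains to deduce triviality of $\cM_{K(x)}$ from triviality of its generic Galois group, and here I would appeal directly to the tannakian formalism rather than to Corollary~\ref{cor:trivialgroup}, because the latter itself rests on Theorem~\ref{thm:GrothKaz} and would make the argument circular. The fibre functor $\eta$ induces an equivalence between $\langle\cM_{K(x)}\rangle^\otimes$ and the category of finite-dimensional representations of $Gal(\cM_{K(x)},\eta_{K(x)})$; when this group is trivial, every object in the target category is a direct sum of copies of the unit representation, so $\cM_{K(x)}$ is isomorphic to a direct sum of copies of $(K(x),\sgq)$, which is exactly the definition of a trivial $q$-difference module. This establishes Theorem~\ref{thm:GrothKaz} from Theorem~\ref{thm:genGalois}.

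There is no serious obstacle here: the equivalence is essentially formal once one recognizes that ``contains the curvatures'' is trivially satisfied by $\{1\}$ in the zero-curvature case, and once the tannakian dictionary between trivial Galois group and trivial object is applied cleanly. The only pitfall is the circularity trap mentioned above, which I would avoid by never routing the final step through Corollary~\ref{cor:trivialgroup}.
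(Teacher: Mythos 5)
Your proof is correct, and it takes a more careful route than the paper's own argument on one point. The paper's proof of this corollary reads, in full: ``We have seen in the proof above that Theorem \ref{thm:GrothKaz} implies Theorem \ref{thm:genGalois}. Corollary \ref{cor:trivialgroup} gives the opposite implication.'' You reproduce the first half verbatim. For the second half, the paper cites Corollary \ref{cor:trivialgroup}, but the direction of that corollary which is actually needed here ($Gal(\cM_{K(x)},\eta_{K(x)})=\{1\}$ implies $\cM_{K(x)}$ trivial) is itself proved in the paper by invoking Theorem \ref{thm:GrothKaz}. As you rightly observe, taking the paper's chain of references at face value would make the derivation ``Theorem \ref{thm:genGalois} $\Rightarrow$ Theorem \ref{thm:GrothKaz}'' pass through Theorem \ref{thm:GrothKaz} itself. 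Your substitute step --- the trivial group scheme has only the unit representation, so by Tannaka duality every object of $\langle\cM_{K(x)}\rangle^\otimes$ is a direct sum of unit objects, hence $\cM_{K(x)}$ is trivial --- is exactly the independent proof of that half of Corollary \ref{cor:trivialgroup} that the paper implicitly leaves to the reader. The cost of your route is having to invoke the full Tannakian reconstruction theorem rather than a previously labelled corollary; the benefit is a logically self-contained deduction of Theorem \ref{thm:GrothKaz} from Theorem \ref{thm:genGalois}, which is what ``equivalence'' should mean. One minor stylistic remark: when you say ``the distinguished line $L$ constructed in Remark~\ref{rmk:noetherianity} is stabilized by the identity modulo $\phi_v$,'' you should make clear you mean a line defining $\{1\}$ as a stabilizer (not the one defining the full generic Galois group); the underlying point, that the reduced operators are identities and therefore stabilize any line whatsoever, is of course correct and is all that is needed for Definition~\ref{defn:contains} to apply to $G=\{1\}$.
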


\begin{proof}
We have seen in the proof above that Theorem \ref{thm:GrothKaz} implies
Theorem \ref{thm:genGalois}. Corollary \ref{cor:trivialgroup} gives the opposite implication.
\end{proof}

\subsection{Finite generic Galois groups}

We deduce from Theorem \ref{thm:genGalois}
the following description of a finite generic Galois group:

\begin{cor}\label{cor:finitegroups}
The following facts are equivalent:
\begin{enumerate}
\item
There exists a positive integer $r$ such that the
$q$-difference module $\cM=(M,\Sgq)$ becomes trivial
as a $\wtilde q$-difference module over $K(\wtilde q,t)$,
with $\wtilde q^r=q$, $t^r=x$.

\item
There exists a positive integer $r$ such that for almost
all $v\in\cC$ the morphism $\Sgq^{\kappa_v r}$ induces the identity
on $M\otimes_\cA\cA/(\phi_v)$.

\item
There exists a $q$-difference field extension $\cF/K(x)$ of finite degree such that
$\cM$ becomes trivial over $\cF$.

\item
The (generic) Galois group of $\cM$ is finite.
\end{enumerate}
In particular, if $Gal(\cM_{K(x)},\eta_{K(x)})$ is finite, it is necessarily cyclic (of order $r$, if one chooses
$r$ minimal in the assertions above).
\end{cor}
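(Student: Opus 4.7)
The plan is to establish the equivalences via the chain of implications $(1) \Rightarrow (3) \Rightarrow (4) \Rightarrow (2) \Rightarrow (1)$, and then to extract the cyclic structure from a direct description of the trivializing extension. The main obstacle will be $(2) \Rightarrow (1)$, for which I would apply Theorem \ref{thm:GrothKaz} on the $\wtilde{q}$-cover and carefully track the curvature condition across the Kummer cover $x = t^r$.

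The implication $(1) \Rightarrow (3)$ is immediate, taking $\cF = K(\wtilde{q}, t)$, a finite $\wtilde{q}$-difference extension of $K(x)$. For $(3) \Rightarrow (4)$, I would use tannakian descent: once the fiber functor on $\langle \cM_{K(x)} \rangle^\otimes$ becomes isomorphic to the forgetful functor after base change to a finite $\sigma$-extension $\cF/K(x)$, the group $Gal(\cM_{K(x)},\eta_{K(x)})$ is a $K(x)$-form of the trivial group, hence a finite algebraic group of order bounded by $[\cF : K(x)]$. Then $(4) \Rightarrow (2)$ is a direct consequence of Lemma \ref{lemma:genGalois} and Theorem \ref{thm:genGalois}: if $G := Gal(\cM_{K(x)}, \eta_{K(x)})$ is finite of exponent $r$, each $\Sgq^{\kappa_v} \bmod \phi_v$ lies in $G$ for almost all $v \in \cC$, so its $r$-th power $\Sgq^{\kappa_v r}$ reduces to the identity modulo $\phi_v$.

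The heart of the proof is $(2) \Rightarrow (1)$, which I would establish by applying Theorem \ref{thm:GrothKaz} to the pulled-back $\wtilde{q}$-difference module $\wtilde{\cM} = (\wtilde{M}, \Sg_{\wtilde{q}})$ over $K(\wtilde{q}, t)$. In the basis $\ul{e} \otimes 1$ inherited from a basis $\ul{e}$ of $\cM$, the computation $\sigma_{\wtilde{q}}^k(\tilde{A}(t)) = A(q^k x)$ yields $\tilde{A}_n(t) = A_n(x)$, so hypothesis (2) immediately gives $\Sg_{\wtilde{q}}^{r \kappa_v} \equiv I \bmod \phi_w$ for cofinitely many cyclotomic places $w$ of $K(\wtilde{q})$ lying above a cyclotomic place $v$ of $K$. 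The delicate point, which I expect to be the main technical difficulty, is that the index $\wtilde{\kappa}_w$ required by Theorem \ref{thm:GrothKaz} for $\wtilde{\cM}$ merely divides $r \kappa_v$ and may be strictly smaller; I would handle this either by replacing $r$ by a suitable multiple so that $\wtilde{\kappa}_w = r \kappa_v$ for cofinitely many $w$, or by a preliminary gauge transformation on $\wtilde{\cM}$ that absorbs the ``$r$-th root'' ambiguity arising from the Kummer cover.

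Finally, the cyclic structure follows from a direct computation of the $\sigma$-equivariant automorphism group of the trivializing extension $K(\wtilde{q}, t)/K(x)$. Any automorphism fixing $K(x)$ must send $\wtilde{q} \mapsto \eta \wtilde{q}$ and $t \mapsto \zeta t$ for $r$-th roots of unity $\eta, \zeta$; imposing commutation with $\sigma_{\wtilde{q}}$ applied to $t$ forces $\eta = 1$, leaving a group manifestly cyclic of order dividing $r$. Since $Gal(\cM_{K(x)}, \eta_{K(x)})$ arises as a quotient of this group via the Picard-Vessiot correspondence applied to the minimal trivializing subextension, it is itself cyclic, and equals the whole group when $r$ is chosen minimal among the integers realizing (1).
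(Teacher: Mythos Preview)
Your chain $(1)\Rightarrow(3)\Rightarrow(4)\Rightarrow(2)\Rightarrow(1)$ differs from the paper's, and the link $(3)\Rightarrow(4)$ as you argue it does not hold. The claim that $Gal(\cM_{K(x)},\eta_{K(x)})$ is ``a $K(x)$-form of the trivial group'' is simply false: a form of the trivial group is trivial. What is true is that $Gal(\cM_{\cF},\eta_{\cF})=\{1\}$, but one only has the inclusion $Gal(\cM_{\cF},\eta_{\cF})\subset Gal(\cM_{K(x)},\eta_{K(x)})\otimes_{K(x)}\cF$ (compare Proposition~\ref{prop:finitegenextension}), which gives no bound on the right-hand side. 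A correct tannakian route to $(3)\Rightarrow(4)$ does exist---embed $\cM_{K(x)}$ into $\cF^{\nu}$ via a fundamental solution and use that the multiplication $\cF\otimes\cF\to\cF$ is a morphism of $q$-difference modules, forcing $Gal(\cF,\eta_{K(x)})$ to act by $K(x)$-algebra automorphisms of $\cF$, hence to be finite---but that is not the ``descent/form'' argument you sketch. The paper avoids the issue entirely: it proves $(3)\Rightarrow(1)$ by an analytic lemma, showing that any algebraic solution of a $q$-difference equation over $K(x)$ already lies in some $K(\wtilde q,t)$ (Puiseux expansion at $0$, then meromorphic continuation using $|q|\neq 1$, forcing rationality).

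Your cyclicity argument also appeals to ``the Picard--Vessiot correspondence,'' but $Gal(\cM_{K(x)},\eta_{K(x)})$ is the tannakian (generic) group, not a Picard--Vessiot group; no such correspondence is available here without further comparison theorems. The paper instead reads off cyclicity from assertion~(2): since the curvatures $\Sgq^{\kappa_v}\bmod\phi_v$ satisfy $(\Sgq^{\kappa_v})^{r}\equiv I$, one obtains a basis in which the representation of the generic Galois group lands in the diagonal $r$-th roots of unity. Finally, you are right to flag the index mismatch $\wtilde\kappa_w\mid r\kappa_v$ in $(2)\Rightarrow(1)$; the paper states this equivalence as an immediate consequence of Theorem~\ref{thm:GrothKaz} for the $\wtilde q$-module without addressing that point, so your caution there is warranted even if your proposed fixes remain vague.
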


\begin{proof}
The equivalence ``$1\Leftrightarrow 2$'' follows from Theorem \ref{thm:GrothKaz}
applied to the $\wtilde q$-dif\-fer\-ence module $(M\otimes K(\wtilde q,t),\Sgq\otimes\sg_{\wtilde q})$,
over the field $K(\wtilde q,t)$.
\par
If the
generic Galois group
is finite, the reduction modulo $\phi_v$ of $\Sgq^{\kappa_v}$ must be a cyclic operator
of order dividing the cardinality of $Gal(\cM_{K(x)},\eta_{K(x)})$. So we have proved that
``$4\Rightarrow 2$''.
On the other hand, assertion $2$ implies that
there exists a basis of $M_{K(x)}$ such that the representation of $Gal(\cM_{K(x)},\eta_{K(x)})$
is given by the group of diagonal matrices, whose diagonal entries are $r$-th roots of unity.
\par
Of course, assertion $1$ implies assertion $3$. The inverse implication follows from
the proposition below, applied to a cyclic vector of $\cM_{K(x)}$.
\end{proof}

\begin{lemma}
Let $K$ be a field and $q$ an element of $K$ which is not a root of unity. We suppose that
there exists a norm $|~|$ over $K$ such that $|q|\neq 1$\footnote{This assumption is always verified
if $K$ is a finite extension of a field of rational functions $k(q)$, as in this paper, or
if there exists an immersion of $K$ in $\C$.}
and we consider a linear $q$-difference equations
\beq\label{eq:algebraicsolutions}
a_\nu(x)y(q^\nu x)+a_{\nu-1}(x)y(q^{\nu-1} x)+\dots+a_0(x)y(x)=0
\eeq
with coefficients in $K(x)$. If there exists
an algebraic $q$-difference extension $\cF$ of $K(x)$ containing a solution $f$ of
\eqref{eq:algebraicsolutions},
then $f$ is contained in an extension of $K(x)$ isomorphic to $K(\wtilde q, t)$,
with $\wtilde q^r=1$ and $t^r=x$.
\end{lemma}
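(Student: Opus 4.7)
The plan is to pass to a finite $\sigma_q$-stable algebraic extension of $K(x)$ containing $f$, analyze its structure via the geometry of the associated curve, and finally descend to the claimed form.

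First, I would set
$$L := K(x)\bigl(f, \sigma_q(f), \dots, \sigma_q^{\nu-1}(f)\bigr) \subseteq \cF.$$
The $\nu$-th order linear $q$-difference equation forces $\sigma_q^\nu(f) \in L$, so $L$ is $\sigma_q$-stable, and each $\sigma_q^i(f)$ is algebraic over $K(x)$ (being a root of the twisted polynomial $\sigma_q^i(P)$ of the same degree as the minimal polynomial $P$ of $f$). Hence $L/K(x)$ is a finite algebraic $\sigma_q$-extension.

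Next I would show that $L$ is rational, of the form $K_1(s)$. Let $C/K$ be the smooth projective curve with function field $L$ and $K_1 := L \cap \overline K$ its field of constants. The lift of $\sigma_q$ to $C$ covers the automorphism $x\mapsto qx$ of $\mathbb{P}^1_K$; this base action has infinite order (since $q$ is not a root of unity), and so does its lift. Hurwitz's theorem rules out $g(C)\geq 2$, and in genus one any infinite-order automorphism is a non-torsion translation with no fixed points---contradicting the fact that a sufficiently high iterate of $\sigma_q$ must fix some point of $C$ above $x=0$. Thus $C\cong\mathbb{P}^1_{K_1}$, so $L=K_1(s)$. After enlarging $K_1$ and a suitable coordinate change one may assume the two fixed points of $\sigma_q$ sit at $s=0,\infty$, so $\sigma_q(s)=\mu s$ with $\mu\in K_1^*$ of infinite order. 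The equation $x=\phi(s)$ then satisfies $\phi(\mu s)=q\phi(s)$; the only rational functions admitting such a functional equation with $\mu$ not a root of unity are monomials $\phi(s)=c\,s^r$, whence $\mu^r=q$. Putting $t:=c^{1/r} s$ (possibly further enlarging $K_1$ to contain $c^{1/r}$) gives $L=K_1(t)$, $t^r=x$, $\sigma_q(t)=\wtilde q\, t$ with $\wtilde q:=\mu$ satisfying $\wtilde q^r=q$.

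The main obstacle is then the descent from $L=K_1(t)$ to $K(\wtilde q)(t)$. Writing the (unique) Laurent expansion $f=\sum_n c_n\, t^n$ with $c_n\in K_1$, substituting into (\ref{eq:algebraicsolutions}) and grouping powers of $t$ yields a system of linear recursions for the $c_n$'s whose coefficients lie in $K(\wtilde q)$; in particular a $K(\wtilde q)$-valued choice of initial conditions propagates through the recursion. The supplementary constraint that $f$ is algebraic over $K(x)$ translates into further polynomial relations among the $c_n$'s. The norm hypothesis $|q|\neq 1$---equivalently $|\wtilde q|=|q|^{1/r}\neq 1$, which separates $\wtilde q$ from all roots of unity---plays its essential role here, providing the analytic input (in the spirit of the Borel--Dwork argument of Proposition \ref{prop:boreldwork}) needed to conclude that the $c_n$'s in fact all lie in $K(\wtilde q)$. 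This yields $f\in K(\wtilde q)(t)=K(\wtilde q,t)$, which is the desired embedding. Making this last descent precise, and in particular pinning down the exact way $|q|\neq 1$ excludes spurious constants from $K_1\setminus K(\wtilde q)$, is the main technical difficulty of the proof.
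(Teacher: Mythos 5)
Your proof goes a genuinely different route from the paper in its first half, but leaves the second half unresolved, which is a real gap.

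\textbf{The paper's route.} The paper never introduces the curve $C$ or any genus considerations. It observes directly that an algebraic solution $f$ of a linear $q$-difference equation, viewed over $K((x))$, is a Puiseux series: $f\in\ol K((t))$ with $t^r=x$. Choosing $\wtilde q$ with $\wtilde q^r=q$ compatible with $\sgq$, one regards \eqref{eq:algebraicsolutions} as a $\wtilde q$-difference equation over $K(\wtilde q,t)$. Then (i) the coefficient recursion for formal solutions has coefficients in $K(\wtilde q)$, so $f$ is a $\ol K$-linear combination of finitely many solutions $f_1,\dots,f_s\in K(\wtilde q)((t))$, hence $f\in\wtilde K((t))$ for some finite $\wtilde K/K(\wtilde q)$; (ii) extending the given norm to $\wtilde K$ and using $|\wtilde q|\neq 1$, the functional equation continues $f$ to a meromorphic function of infinite radius of meromorphy, and since $f$ is algebraic it has at worst a pole at $t=\infty$, so it is rational. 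This is shorter and more elementary than the geometric argument, and sidesteps the delicate points in yours (whether $\sgq$ lifts to $C$, whether the non-algebraically-closed $K_1$ causes trouble for the fixed-point argument, the fact that in genus one an infinite-order automorphism of the curve need not be a translation over $K_1$ itself, etc.).

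\textbf{The gap.} You explicitly leave the descent ``the main technical difficulty'' unresolved, and this is precisely where the proof does its work. Moreover the mechanism you gesture at is not quite the right one: this lemma does not use a Borel--Dwork / product-formula argument over all places. It uses a single place with $|q|\neq 1$, and the two-step argument ``recursion with $K(\wtilde q)$-coefficients gives $f\in\wtilde K((t))$'' followed by ``algebraic $+$ meromorphic with infinite radius of meromorphy $\Rightarrow$ rational'' is what closes the proof. Framing the descent as an adelic statement, as you do, would require far more than is available or needed here. If you keep your geometric first half, you would still need to supply the paper's recursion argument and the one-place meromorphy argument to finish; as written, the proof is incomplete precisely at the step you flag.

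(One small remark: the statement as printed has a typo, ``$\wtilde q^r=1$''; both your proof and the paper's work with $\wtilde q^r=q$, which is what is intended.)
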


\begin{proof}
Let us look at \eqref{eq:algebraicsolutions} as an equation with coefficients
in $K((x))$. Then the algebraic solution $f$ of \eqref{eq:algebraicsolutions}
can be identified to a Laurent series in $\ol K((t))$, where $\ol K$ is the algebraic closure of $K$
and $t^r=x$, for a convenient positive integer $r$.
Let $\wtilde q$ be an element of $\ol K$ such that $\wtilde q^r=q$ and that
$\sgq(f)=f(\wtilde q t)$. We can look at \eqref{eq:algebraicsolutions}
as a $\wtilde q$-difference equation with coefficients in $K(\wtilde q,t)$.
Then the recurrence relation induced by \eqref{eq:algebraicsolutions}
over the coefficients of a formal solution shows that there exist
$f_1,\dots,f_s$ solutions of \eqref{eq:algebraicsolutions} in $K(\wtilde q)((t))$
such that $f\in\sum_i\ol K f_i$.
It follows that there exists a finite extension $\wtilde K$ of $K(\wtilde q)$
such that $f\in\wtilde K((t))$.
\par
We fix an extension of $|~|$ to $\wtilde K$, that we still call $|~|$.
Since $f$ is algebraic, it is a germ of meromorphic function at $0$.
Since $|\wtilde q|\neq 1$,
the functional equation \eqref{eq:algebraicsolutions} itself allows
to show that $f$ is actually a meromorphic function with infinite radius of meromorphy.
Finally, $f$ can have at worst a pole at $t=\infty$, since it is an algebraic function,
which actually implies that $f$ is the Laurent expansion of a rational function in
$K(\wtilde q,t)$.
\end{proof}

\subsection{Devissage of nonreduced generic Galois groups}

Independently of the characteristic of the base field,
there is no proper Galois correspondence for generic Galois groups.
If $\cN=(N,\Sgq)$ is an object of $\langle\cM_{K(x)}\rangle^\otimes$, then
there exists a normal subgroup $H$ of $Gal(\cM_{K(x)},\eta_{K(x)})$
such that $H$ acts as the identity on $N_{K(x)}$ and
\beq\label{eq:Galoiscorrespondence}
Gal(\cN_{K(x)},\eta_{K(x)})\cong
\frac{Gal(\cM_{K(x)},\eta_{K(x)})}{H}.
\eeq
In fact, the category $\langle\cN_{K(x)}\rangle^\otimes$ is a full subcategory
of $\langle\cM_{K(x)}\rangle^\otimes$ and therefore there exists a surjective functorial
morphism
$$
Gal(\cM_{K(x)},\eta_{K(x)})\longrightarrow
Gal(\cN_{K(x)},\eta_{K(x)}).
$$
The kernel of such morphism is the normal subgroup of $Gal(\cM_{K(x)},\eta_{K(x)})$ that acts as the identity
of $\cN_{K(x)}$.
On the other hand, if $H$ is a normal subgroup of $Gal(\cM_{K(x)},\eta_{K(x)})$,
it is not always possible to find an object $\cN_{K(x)}=(N_{K(x)},\Sgq)$ of $\langle\cM_{K(x)}\rangle^\otimes$
such that we have \eqref{eq:Galoiscorrespondence}.
This happens because the generic Galois group $Gal(\cM_{K(x)},\eta_{K(x)})$ stabilizes
all the sub-$q$-difference modules of the constructions on $\cM_{K(x)}$ but also
other submodules, which are not stable by $\Sgq$. So, if $H=Stab(L_{K(x)})$, for some line
$L_{K(x)}$ in some algebraic construction of $\cM_{K(x)}$,
the orbit of $L_{K(x)}$ with respect to $Gal(\cM_{K(x)},\eta_{K(x)})$ could be a
$q$-difference module, allowing to establish
\eqref{eq:Galoiscorrespondence}, but in general it won't be.

\medskip
In spite of the fact that in this setting we do not have a Galois correspondence, we can
establish some devissage of $Gal(\cM_{K(x)},\eta_{K(x)})$, when it is not reduced.
So let us suppose that the group $Gal(\cM_{K(x)},\eta_{K(x)})$ is nonreduced, and therefore that
the characteristic of $k$ is $p>0$.
Then there exists a maximal reduced subgroup $Gal_{red}(\cM_{K(x)},\eta_{K(x)})$ of
$Gal(\cM_{K(x)},\eta_{K(x)})$
and a short exact sequence of groups:
\beq\label{eq:sesRed}
1\longrightarrow Gal_{red}(\cM_{K(x)},\eta_{K(x)})
\longrightarrow Gal(\cM_{K(x)},\eta_{K(x)})
\longrightarrow \mu_{p^\ell}\longrightarrow 1,
\eeq
for some positive integer $\ell$, uniquely determined by the above short exact sequence.
We remind that the subgroup $Gal_{red}(\cM_{K(x)},\eta_{K(x)})$ of
$Gal(\cM_{K(x)},\eta_{K(x)})$ is normal.

\begin{thm}\label{thm:genGaloisred}
The subgroup $Gal_{red}(\cM_{K(x)},\eta_{K(x)})$ of $Gal(\cM_{K(x)},\eta_{K(x)})$
is the smallest algebraic subgroup of
$\GL(M_{K(x)})$ whose reduction modulo $\phi_v$ contains the operators
$\Sgq^{\kappa_v p^\ell}$ for almost all $v\in\cC_K$.
\end{thm}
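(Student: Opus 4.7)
The plan is to derive Theorem \ref{thm:genGaloisred} from Theorem \ref{thm:genGalois} by combining it with the short exact sequence \eqref{eq:sesRed}. Write $H$ for the smallest closed algebraic subgroup of $\GL(M_{K(x)})$ whose reduction modulo $\phi_v$ contains $\Sgq^{\kappa_v p^\ell}$ for almost all $v\in\cC_K$; the goal is to show $H=Gal_{red}(\cM_{K(x)},\eta_{K(x)})$. The easy inclusion $H\subseteq Gal_{red}$ follows almost immediately from Theorem \ref{thm:genGalois}: that result places $\Sgq^{\kappa_v}$ modulo $\phi_v$ inside $Gal(\cM_{K(x)},\eta_{K(x)})$ for almost all $v$, and composing with the surjection $\pi\colon Gal\twoheadrightarrow\mu_{p^\ell}$ of \eqref{eq:sesRed} together with the fact that $\mu_{p^\ell}$ is killed by the $p^\ell$-th power gives $\pi(\Sgq^{\kappa_v p^\ell})=\pi(\Sgq^{\kappa_v})^{p^\ell}=1$, so $\Sgq^{\kappa_v p^\ell}$ modulo $\phi_v$ belongs to $\ker\pi=Gal_{red}$; minimality of $H$ yields the inclusion.

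For the reverse inclusion $Gal_{red}\subseteq H$, my strategy is to identify $Gal_{red}$ with the generic Galois group of the $q^{p^\ell}$-difference module $\cM^{(p^\ell)}:=(M_{K(x)},\Sgq^{p^\ell})$ over $K(x)$ and then apply Theorem \ref{thm:genGalois} to $\cM^{(p^\ell)}$. The inclusion $Gal(\cM^{(p^\ell)})\subseteq Gal(\cM)$ is immediate since every $\Sgq$-stable submodule of a construction is a fortiori $\Sgq^{p^\ell}$-stable. To see that the containment lands exactly in $Gal_{red}$, I would perform Galois descent along the purely inseparable base change $K(x)\hookrightarrow\wtilde K(x^{1/p^\ell})$, with $\wtilde K$ containing a $p^\ell$-th root $q^{1/p^\ell}$ of $q$: over this extension $\Sgq^{p^\ell}$ becomes the first power of the $q$-shift in the variable $s=x^{1/p^\ell}$ (with parameter $\wtilde q=q^{1/p^\ell}$), so the corresponding generic Galois group is reduced, and its descent back to $K(x)$ is forced by \eqref{eq:sesRed} to coincide with $Gal_{red}$. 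Once this identification is in hand, Theorem \ref{thm:genGalois} applied to $\cM^{(p^\ell)}$, together with the observation that in characteristic $p$ each $v\in\cC_K$ has $\kappa_v$ coprime to $p$ (so that the orders of $q$ and $q^{p^\ell}$ modulo $\phi_v$ coincide and the curvature operators $(\Sgq^{p^\ell})^{\kappa_v}=\Sgq^{\kappa_v p^\ell}$ match those in the statement), completes the characterization of $Gal_{red}$ as $H$.

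The main obstacle is the identification $Gal(\cM^{(p^\ell)})=Gal_{red}(\cM)$. This is delicate because the natural cyclotomic uniformizer for $q^{p^\ell}$ at a place $v\in\cC_K$ is $\phi_v^{p^\ell}$ rather than $\phi_v$, so a direct application of Theorem \ref{thm:genGalois} to $\cM^{(p^\ell)}$ with the cyclotomic data intrinsic to $q^{p^\ell}$ imposes a strictly stronger condition than the one appearing in Theorem \ref{thm:genGaloisred}; the descent along $\wtilde K(x^{1/p^\ell})/K(x)$ is the mechanism that recovers the weaker condition modulo $\phi_v$. The interplay between the infinitesimal quotient $\mu_{p^\ell}$ and the $p^\ell$-th-power structure of the curvatures, mediated by the identity $(1+N)^{p^\ell}=1+N^{p^\ell}$ for unipotent $N$ in characteristic $p$, is the geometric heart of the argument.
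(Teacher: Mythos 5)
Your proposal takes a route that is structurally different from the paper's, and while one half of it is a reasonable alternative, the other half contains a circularity that the paper's proof is specifically designed to avoid.

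Your ``easy inclusion'' $H\subseteq Gal_{red}$, via pushing $\Sgq^{\kappa_v}$ mod $\phi_v$ through the quotient map $\pi$ to $\mu_{p^\ell}$ and raising to the $p^\ell$-th power, is a clean observation. It would need a spreading-out step (to make $\pi$ and the line defining $Gal_{red}$ available over $\cA/(\phi_v)$ for almost all $v$), but conceptually it is sound. Interestingly, the paper does not establish this inclusion separately: in the published proof it falls out of an induction on $\ell$.

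The serious gap is in your ``reverse inclusion.'' You propose to identify $Gal_{red}(\cM_{K(x)},\eta_{K(x)})$ with $Gal(\cM_{K(x)}^{(\ell)},\eta_{K(x)})$ and then apply Theorem~\ref{thm:genGalois} to $\cM^{(\ell)}$. But this identification is precisely the first bullet of Corollary~\ref{cor:diffgenGaloisred}, which in the paper is \emph{deduced from} Theorem~\ref{thm:genGaloisred}; invoking it here makes the argument circular. Your suggested mechanism for establishing the identification independently --- ``Galois descent along the purely inseparable base change $K(x)\hookrightarrow\wtilde K(x^{1/p^\ell})$'' --- is not available: purely inseparable extensions carry no Galois action, so there is no descent datum to speak of, and the claim that the generic Galois group over $\wtilde K(x^{1/p^\ell})$ is reduced is itself part of Corollary~\ref{cor:diffgenGaloisred}, not a free input. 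You also conflate two different modules: $\cM^{(\ell)}=(M_{K(x)},\Sgq^{p^\ell})$ over $K(x)$ and $\cM_{\wtilde K(x^{1/p^\ell})}=(M\otimes\wtilde K(x^{1/p^\ell}),\Sgq\otimes\sg_{q^{1/p^\ell}})$ over $\wtilde K(x^{1/p^\ell})$ are not the same object, and the ``first power of the $q$-shift in $s=x^{1/p^\ell}$'' you describe is $\Sgq^{p^\ell}$ acting as the $q$-shift in $s$, not the $q^{1/p^\ell}$-shift of the corollary.

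What the paper actually does is different in both halves. Lemma~\ref{lemma:devissage1} proves $Gal_{red}\subseteq H$ by a Chevalley-type argument: $H$ is realised as the stabiliser of a line fixed by $H$, that line generates a $q$-difference submodule $\cW'$ to which Corollary~\ref{cor:finitegroups} applies, so $Gal(\cW')$ is finite cyclic of order $p^{\ell'}$ for some $\ell'\le\ell$, and $H\subseteq\ker\bigl(Gal(\cM_{K(x)})\twoheadrightarrow\mu_{p^{\ell'}}\bigr)\supseteq Gal_{red}$. Lemma~\ref{lemma:devissage2} identifies $H$ with $Gal(\cM^{(\ell)})$ by applying Theorem~\ref{thm:genGalois} to $\cM^{(\ell)}$ and the fact that, in characteristic $p$, $\kappa_v$ is prime to $p$ so $q$ and $q^{p^\ell}$ have the same order $\kappa_v$ modulo $v$. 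The theorem is then obtained by induction on $\ell$, descending through the chain $Gal_{red}(\cM)=Gal_{red}(\cM^{(\ell')})\subset Gal(\cM^{(\ell')})=H$ and applying the inductive hypothesis to the shorter exact sequence for $\cM^{(\ell')}$. To the extent that your concern about the $\phi_v$-versus-$\phi_v^{p^\ell}$ normalisation is legitimate, it points at a technicality internal to Lemma~\ref{lemma:devissage2}, but the descent argument you propose does not address it; if you want to salvage your outline, you should replace the descent step with an argument along the lines of Lemma~\ref{lemma:devissage1} and make the induction explicit.
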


We first prove two lemmas.

\begin{lemma}\label{lemma:devissage1}
The group $Gal_{red}(\cM_{K(x)},\eta_{K(x)})$ is contained in the smallest algebraic subgroup
$H$ of
$\GL(M_{K(x)})$ whose reduction modulo $\phi_v$ contains the operators
$\Sgq^{\kappa_v p^\ell}$ for almost all $v\in\cC_K$.
\end{lemma}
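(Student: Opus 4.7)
The plan is to identify $H$ with the generic Galois group of the $q^{p^\ell}$-difference module $\widetilde\cM := (M_{K(x)}, \Sgq^{p^\ell})$ through Theorem \ref{thm:genGalois}, and then show $Gal_{red}(\cM_{K(x)}, \eta_{K(x)}) \subseteq Gal(\widetilde\cM, \eta_{K(x)})$ via tannakian functoriality.

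First I exploit the hypothesis $\mathrm{char}\,k = p > 0$: the order $\kappa_v$ of $q$ modulo $\pi_v$ is coprime to $p$ for every $v \in \cC$, since any element of finite order in a field of characteristic $p$ has order prime to $p$ (an equation $y^{p^n}=1$ forces $y=1$). Hence $q^{p^\ell}$ has order $\kappa_v$ modulo $\pi_v$, and the cyclotomic places of $k(q^{p^\ell})$ induced by $v \in \cC$ coincide with those of $k(q)$. Applying Theorem \ref{thm:genGalois} to $\widetilde\cM$, over an integral model inherited from $\cA$ (with $\widetilde\phi_v$ the uniformizer of the cyclotomic place of $k(q^{p^\ell})$), identifies $Gal(\widetilde\cM, \eta_{K(x)})$ as the smallest closed algebraic subgroup of $\GL(M_{K(x)})$ containing the operators $(\Sgq^{p^\ell})^{\kappa_v} = \Sgq^{\kappa_v p^\ell}$ modulo $\widetilde\phi_v$. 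A careful comparison of the ideals generated by $\widetilde\phi_v$ and $\phi_v$ in $\cO_K$ (they differ by a ramification factor coming from the purely inseparable extension $k(q)/k(q^{p^\ell})$ of degree $p^\ell$) then yields $Gal(\widetilde\cM, \eta_{K(x)}) = H$.

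Next I show $Gal_{red}(\cM_{K(x)}, \eta_{K(x)}) \subseteq Gal(\widetilde\cM, \eta_{K(x)})$. The restriction-of-structure functor sending a $q$-difference module to its underlying $q^{p^\ell}$-difference module gives a faithful tensor functor $\langle \cM_{K(x)} \rangle^\otimes \to \langle \widetilde\cM \rangle^\otimes$ compatible with the fiber functors, and tannakian functoriality yields a closed immersion $Gal(\widetilde\cM, \eta_{K(x)}) \hookrightarrow Gal(\cM_{K(x)}, \eta_{K(x)})$. The quotient $Gal(\cM)/Gal(\widetilde\cM)$ is controlled by the $q^{p^\ell}$-diff\-er\-ence submodules of constructions that are not $q$-diff\-er\-ence submodules; these ``extra'' subobjects contribute only $p^\ell$-torsion rank-one characters in characteristic $p$, since the passage from $\Sgq$ to $\Sgq^{p^\ell}$ is a purely inseparable operation. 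Hence the quotient embeds into $\mu_{p^\ell}$, and by the defining property of $\mu_{p^\ell}$ as the quotient $Gal(\cM)/Gal_{red}(\cM)$ in the short exact sequence \eqref{eq:sesRed}, we conclude $Gal_{red}(\cM) \subseteq Gal(\widetilde\cM, \eta_{K(x)}) = H$.

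The main obstacles will be the careful handling of the uniformizer comparison $\widetilde\phi_v$ versus $\phi_v$ in the first step (which requires a ramification analysis in the purely inseparable extension $k(q)/k(q^{p^\ell})$), and the rigorous bound $Gal(\cM)/Gal(\widetilde\cM) \hookrightarrow \mu_{p^\ell}$ in the second step, which reflects the fact that raising to the $p^\ell$-th power is precisely the infinitesimal operation that captures the non-reduced part of the generic Galois group in characteristic $p$.
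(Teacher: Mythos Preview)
Your strategy differs from the paper's and has a genuine gap in the second step.

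Your Step~1 --- identifying $H$ with $Gal(\widetilde\cM,\eta_{K(x)})$ where $\widetilde\cM=(M_{K(x)},\Sgq^{p^\ell})$ --- is correct, and is in fact stated and proved separately in the paper as Lemma~\ref{lemma:devissage2}. (The $\phi_v$ versus $\widetilde\phi_v$ issue you flag is real but minor: since $k(q)/k(q^{p^\ell})$ is purely inseparable, one checks that $\widetilde\phi_v=\phi_v^{p^\ell}$ in $\cO_K$, and the ``smallest subgroup containing the curvatures'' is insensitive to this because stabilising $L\otimes\cA/(\phi_v^{p^\ell})$ implies stabilising $L\otimes\cA/(\phi_v)$, and conversely the noetherian minimality forces equality.)

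The real problem is Step~2. You assert that the closed immersion $Gal(\widetilde\cM)\hookrightarrow Gal(\cM_{K(x)})$ has quotient embedding into $\mu_{p^\ell}$, but you neither establish that $Gal(\widetilde\cM)$ is \emph{normal} in $Gal(\cM_{K(x)})$, nor do you give any argument beyond the phrase ``purely inseparable operation'' for why the quotient is infinitesimal of that height. Tannakian functoriality gives you the closed immersion, but nothing more: the restriction functor $\langle\cM_{K(x)}\rangle^\otimes\to\langle\widetilde\cM\rangle^\otimes$ is not obviously a \emph{normal} quotient of tannakian categories, and the ``extra'' $\Sgq^{p^\ell}$-stable subobjects need not be permuted by $Gal(\cM_{K(x)})$. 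In fact, the normality of $Gal(\widetilde\cM)$ and the identification of the quotient with $\mu_{p^\ell}$ is essentially the \emph{content} of Theorem~\ref{thm:genGaloisred}, which the paper proves by induction \emph{using} Lemma~\ref{lemma:devissage1} as an input. So your argument, as it stands, is either circular or incomplete.

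The paper's proof avoids this altogether. It works directly with a line $L'_{K(x)}$ that is \emph{fixed} (not merely stabilised) by $H$, and observes that since $H$ contains $\Sgq^{\kappa_v p^\ell}$ modulo $\phi_v$, these operators fix $L'$ modulo $\phi_v$; because they commute with $\Sgq$, they act as the identity on the entire $q$-difference module $\cW'$ generated by $L'$. Corollary~\ref{cor:finitegroups} then gives $Gal(\cW'_{K(x)},\eta_{K(x)})\cong\mu_{p^{\ell'}}$ for some $\ell'\le\ell$, so the functorial surjection $Gal(\cM_{K(x)},\eta_{K(x)})\twoheadrightarrow\mu_{p^{\ell'}}$ kills $Gal_{red}$ (any homomorphism from a reduced group to an infinitesimal one is trivial), while its kernel fixes $L'$ and hence sits inside $H$. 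This gives $Gal_{red}\subset\ker\subset H$ without ever needing to know that $H$ itself is normal.
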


\begin{proof}
Let $H$ be the smallest algebraic subgroup of
$\GL(M_{K(x)})$ whose reduction modulo $\phi_v$ contains the operators
$\Sgq^{\kappa_v p^\ell}$ for almost all $v\in\cC_K$.
We know that $H=Stab(L_{K(x)})$ for some line $L_{K(x)}$ contained in some object of
$\langle\cM_{K(x)}\rangle^\otimes$.
Once again, as in the proof of Theorem \ref{thm:genGalois}, we can find another line $L^\p_{K(x)}$, that
defines $H$ as a stabilizer and which is actually fixed by $H$. It follows that $L^\p$ generates
a $q$-difference module $\cW^\p$ over $\cA$,
that satisfies the hypothesis of
Corollary \ref{cor:finitegroups}.
We conclude that there exists a nonnegative integer
$\ell^\p\leq\ell$ such that $H$ is contained in the
kernel of the surjective map:
\beq\label{eq:ell'}
Gal(\cM_{K(x)},\eta_{K(x)})\longrightarrow Gal(\cW^\p_{K(x)},\eta_{K(x)})=\mu_{p^{\ell^\p}},
\eeq
and therefore that $Gal_{red}(\cM_{K(x)},\eta_{K(x)})\subset H$.
\end{proof}

\begin{lemma}\label{lemma:devissage2}
Let $q^{(\ell)}=q^{p^\ell}$. We consider the $q^{(\ell)}$-difference module $\cM_{K(x)}^{(\ell)}$
obtained from $\cM_{K(x)}$ iterating $\Sgq$, \ie
$\cM_{K(x)}^{(\ell)}=(M_{K(x)},\Sg_{q^{(\ell)}})$, with $\Sg_{q^{(\ell)}}=\Sgq^{p^\ell}$.
Then $Gal(\cM_{K(x)}^{(\ell)},\eta_{K(x)})$ is the smallest algebraic subgroup of
$\GL(M_{K(x)})$ whose reduction modulo $\phi_v$ contains the operators
$\Sgq^{\kappa_v p^\ell}$ for almost all $v\in\cC_K$.
\end{lemma}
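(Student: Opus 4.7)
The plan is to apply Theorem \ref{thm:genGalois} directly to the $q^{(\ell)}$-difference module $\cM^{(\ell)}=(M,\Sgq^{p^\ell})$ over $\cA$, viewing $K$ as a finite extension of $k(q^{(\ell)})$. This is legitimate: the element $q^{(\ell)}=q^{p^\ell}$ remains transcendental over $k$, the extension $k(q)/k(q^{(\ell)})$ is finite (purely inseparable of degree $p^\ell$ in characteristic $p$), and $\cA$ is stable under $\sgq^{p^\ell}=\sg_{q^{(\ell)}}$, so $\cM^{(\ell)}$ is indeed a $q^{(\ell)}$-difference module over $\cA$ of the type considered in the previous sections.

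The core of the argument is to identify the cyclotomic data attached to $q^{(\ell)}$ with the cyclotomic data attached to $q$. Since we are in the nonreduced case, the characteristic of $k$ is $p>0$, and any root of unity in an extension of $\F_p$ has order coprime to $p$. In particular, for any $v\in\cC_K$, the order $\kappa_v$ of $q\bmod v$ satisfies $\gcd(\kappa_v,p^\ell)=1$, so
$$
q^{(\ell)}\bmod v=(q\bmod v)^{p^\ell}
$$
has order exactly $\kappa_v$. Thus the set of cyclotomic places of $K$ over $k(q^{(\ell)})$ coincides, up to finitely many, with $\cC_K$, and the associated orders $\kappa_v^{(\ell)}$ are equal to $\kappa_v$. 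Moreover, since a given place $v$ has only one maximal ideal, the uniformizer $\phi_v^{(\ell)}$ attached to $v$ in the $q^{(\ell)}$-picture differs from $\phi_v$ only by a unit in the localization, so reductions modulo $\phi_v^{(\ell)}$ and modulo $\phi_v$ agree on $M\otimes_\cA\cA/(\phi_v)$.

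With these identifications, Theorem \ref{thm:genGalois} applied to $\cM^{(\ell)}$ says that $Gal(\cM_{K(x)}^{(\ell)},\eta_{K(x)})$ is the smallest closed algebraic subgroup of $\GL(M_{K(x)})$ containing the operators
$$
\Sg_{q^{(\ell)}}^{\kappa_v^{(\ell)}}=\bigl(\Sgq^{p^\ell}\bigr)^{\kappa_v}=\Sgq^{\kappa_v p^\ell}
$$
modulo $\phi_v$ for almost all $v\in\cC_K$, which is exactly the claimed description.

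The only potentially delicate point is the matching of the cyclotomic data for $q$ and for $q^{(\ell)}$, but this reduces to the elementary observation that raising to the $p^\ell$-th power is a bijection on roots of unity of order coprime to $p$, together with the fact that two uniformizers of the same place coincide up to a unit; no further difficulty arises.
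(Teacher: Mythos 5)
Your overall strategy is the same as the paper's: view $K$ as a finite extension of $k(q^{(\ell)})$, apply Theorem \ref{thm:genGalois} to the $q^{(\ell)}$-difference module $\cM^{(\ell)}$, and use that in characteristic $p$ the order $\kappa_v$ of $q_v$ is prime to $p$, so that $q^{(\ell)}$ has the same order $\kappa_v$ modulo $v$ and $\Sg_{q^{(\ell)}}^{\kappa_v}=\Sgq^{\kappa_v p^\ell}$. This part is correct and is exactly the paper's argument.

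However, the claim that ``the uniformizer $\phi_v^{(\ell)}$ attached to $v$ in the $q^{(\ell)}$-picture differs from $\phi_v$ only by a unit in the localization, so reductions modulo $\phi_v^{(\ell)}$ and modulo $\phi_v$ agree'' is false. The extension $k(q)/k(q^{(\ell)})$ is purely inseparable of degree $p^\ell$ in characteristic $p$, so it is \emph{totally ramified} at every finite place: if $\psi(T)\in k[T]$ is the minimal polynomial of $q_v^{p^\ell}$, then (using perfectness of $k$ and Frobenius) $\psi(q^{p^\ell})=\phi_v(q)^{p^\ell}$ up to a unit, so the ideal $(\phi_v^{(\ell)})$ equals $(\phi_v)^{p^\ell}$, not $(\phi_v)$. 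Consequently, Theorem \ref{thm:genGalois} applied to $\cM^{(\ell)}$ characterizes $Gal(\cM^{(\ell)},\eta_{K(x)})$ in terms of containment of $\Sgq^{\kappa_v p^\ell}$ modulo $\phi_v^{p^\ell}$, which is a \emph{stronger} condition than containment modulo $\phi_v$; the identification with the smallest subgroup satisfying the weaker ($\bmod\,\phi_v$) condition is not immediate. (One inclusion, namely $\subset$, is clear: the $\bmod\,\phi_v^{p^\ell}$ condition implies the $\bmod\,\phi_v$ condition; the nontrivial direction is the reverse.) It must be said that the paper's own one-line proof also silently passes over this point, so the gap is not of your making; but your proposed justification for closing it -- that the two uniformizers coincide up to a unit -- is incorrect and should be replaced by an actual argument comparing the two levels of reduction.
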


\begin{proof}
Since the characteristic of $k$ is $p>0$, the order $\kappa_v$ of $q_v$ in the residue field $k_v$
is a divisor of $p^n-1$ for some positive integer $n$.
It follows that the order of $q^{(\ell)}$ modulo $v$ is equal to $\kappa_v$ for almost all $v\in\cC_K$.
Theorem \ref{thm:genGalois} allows to conclude, since
$\Sg_{q^{(\ell)}}^{\kappa_v}=\Sgq^{\kappa_v p^\ell}$.
\end{proof}

\begin{proof}[Proof of Theorem \ref{thm:genGaloisred}]
We will prove the statement by induction on $\ell\geq 0$, in the short exact sequence \eqref{eq:sesRed}.
The statement is trivial for $\ell=0$, since in this case
$Gal_{red}(\cM_{K(x)},\eta_{K(x)})=Gal(\cM_{K(x)},\eta_{K(x)})$.
Let us suppose that $\ell>0$ and that the statement is proved for any $\ell^\p<\ell$.
In the notation of the lemmas above, we have:
$$
Gal_{red}(\cM_{K(x)},\eta_{K(x)})\subset H.
$$
We suppose that the inclusion is strict, otherwise there would be nothing to prove.
This means that in \eqref{eq:ell'} we have $\ell^\p>0$:
$$
\xymatrix{H\ar@{^{(}->}[r]& Gal(\cM_{K(x)},\eta_{K(x)})\ar@{->>}[r]&\mu_{p^{\ell^\p}}.}
$$
We claim that $H$ is the smallest subgroup that contains
$\Sgq^{\kappa_v p^{\ell^\p}}$ modulo $\phi_v$ for almost all $v$
and therefore that $H=Gal(\cM_{K(x)}^{(\ell^\p)},\eta_{K(x)})$, because of Lemma
\ref{lemma:devissage2}.
In fact the smallest subgroup that contains
$\Sgq^{\kappa_v p^{\ell^\p}}$ modulo $\phi_v$ for almost all $v$
is contained in $H$
by definition, while morphism \eqref{eq:ell'} proves that $\Sgq^{\kappa_v p^{\ell^\p}}$
stabilizes the line $L_{K(x)}$, considered in Lemma \ref{lemma:devissage1}, modulo $\phi_v$.
Then Lemma \ref{lemma:devissage2} implies that $H=Gal(\cM_{K(x)}^{(\ell^\p)},\eta_{K(x)})$.
\par
Since $Gal_{red}(\cM_{K(x)},\eta_{K(x)})=Gal_{red}(\cM_{K(x)}^{(\ell^\p)},\eta_{K(x)})\subset H$,
we have a short exact sequence:
$$
1\longrightarrow Gal_{red}(\cM_{K(x)}^{(\ell^\p)},\eta_{K(x)})
\longrightarrow Gal(\cM_{K(x)}^{(\ell^\p)},\eta_{K(x)})
\longrightarrow \mu_{p^{\ell-\ell^\p}}
\longrightarrow 1.
$$
The inductive hypotheses implies that
$Gal_{red}(\cM_{K(x)}^{(\ell^\p)},\eta_{K(x)})$ is the smallest subgroup of $\GL(M_{K(x)})$
containing the operators $\Sg_{q^{(\ell)}}^{\kappa_v p^{\ell-\ell^\p}}=\Sgq^{\kappa_v p^\ell}$.
This ends the proof.
\end{proof}

We obtain the following corollary:

\begin{cor}\label{cor:diffgenGaloisred}
In the notation of the theorem above:
\begin{itemize}

\item
$Gal_{red}(\cM_{K(x)},\eta_{K(x)})=Gal(\cM_{K(x)}^{(\ell)},\eta_{K(x)})$.

\item
Let $\wtilde K$ be a finite extension of $K$ containing a $p^\ell$-th root $q^{1/p^\ell}$ of $q$.
Then the generic Galois group $Gal(\cM_{\wtilde K(x^{1/p^\ell})},\eta_{\wtilde K(x^{1/p^\ell})})$
of the $q^{1/p^\ell}$-difference module $\cM_{\wtilde K(x^{1/p^\ell})}$ is reduced
and
$$
Gal(\cM_{\wtilde K(x^{1/p^\ell})},\eta_{\wtilde K(x^{1/p^\ell})})
\subset Gal_{red}(\cM_{K(x)},\eta_{K(x)})
\otimes_{K(x)}\wtilde K(x^{1/p^\ell}).
$$

\end{itemize}
\end{cor}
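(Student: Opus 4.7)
\smallskip
\noindent\textbf{Proof plan.}
My strategy is to handle the two bullets separately, with the first being immediate and the second requiring a Tannakian argument together with a descent step.

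For the first bullet, I would simply compare universal characterizations. Theorem \ref{thm:genGaloisred} identifies $Gal_{red}(\cM_{K(x)},\eta_{K(x)})$ as the smallest closed algebraic subgroup of $\GL(M_{K(x)})$ whose reduction modulo $\phi_v$ contains $\Sgq^{\kappa_v p^\ell}$ for almost all $v\in\cC_K$. Lemma \ref{lemma:devissage2}, combined with Theorem \ref{thm:genGalois} applied to the $q^{p^\ell}$-difference module $\cM^{(\ell)}_{K(x)}$, gives exactly the same characterization for $Gal(\cM^{(\ell)}_{K(x)},\eta_{K(x)})$, since $\Sg_{q^{(\ell)}}^{\kappa_v}=\Sgq^{\kappa_v p^\ell}$. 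Uniqueness of the smallest subgroup satisfying this property forces the two to coincide.

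For the containment half of the second bullet, I would use Tannakian duality together with Corollary \ref{cor:finitegroups}. The short exact sequence \eqref{eq:sesRed} produces a character $\chi\colon Gal(\cM_{K(x)},\eta_{K(x)})\twoheadrightarrow\mu_{p^\ell}$, which by the Chevalley-type argument in Remark \ref{rmk:noetherianity} is realized by the action on a rank-one $q$-difference submodule $\cL$ of some object of $\langle\cM_{K(x)}\rangle^\otimes$, with generic Galois group $\mu_{p^\ell}$. Corollary \ref{cor:finitegroups} with $r=p^\ell$ then ensures that $\cL$ becomes trivial over $K(q^{1/p^\ell},x^{1/p^\ell})$, hence \emph{a fortiori} over $\wtilde K(x^{1/p^\ell})$. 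By functoriality of generic Galois groups under scalar extension one obtains a map $Gal(\cM_{\wtilde K(x^{1/p^\ell})},\eta_{\wtilde K(x^{1/p^\ell})})\to Gal(\cM_{K(x)},\eta_{K(x)})\otimes_{K(x)}\wtilde K(x^{1/p^\ell})$; composing with $\chi\otimes 1$ yields the trivial morphism, so the image lands in the kernel $Gal_{red}(\cM_{K(x)},\eta_{K(x)})\otimes_{K(x)}\wtilde K(x^{1/p^\ell})$.

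For the reducedness of $Gal(\cM_{\wtilde K(x^{1/p^\ell})},\eta_{\wtilde K(x^{1/p^\ell})})$, I would argue by contradiction. If this group were not reduced, the analog of \eqref{eq:sesRed} applied to the $q^{1/p^\ell}$-difference module $\cM_{\wtilde K(x^{1/p^\ell})}$ would yield an integer $\ell''\geq 1$ and a non-reduced quotient $\mu_{p^{\ell''}}$, corresponding to a rank-one submodule $\cL''$ of some construction. Corollary \ref{cor:finitegroups} applied over $\wtilde K(x^{1/p^\ell})$ with parameter $q^{1/p^\ell}$ would force $\cL''$ to trivialize only after adjoining $q^{1/p^{\ell+\ell''}}$ and $x^{1/p^{\ell+\ell''}}$. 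Transporting this obstruction back to $\cM_{K(x)}$ through the $p^\ell$-th iterate functor $\cN\mapsto\cN^{(\ell)}$ (using the crucial fact, exploited in Lemma \ref{lemma:devissage2}, that in characteristic $p$ the order $\kappa_w$ of $q^{1/p^\ell}$ at any cyclotomic place $w\vert v$ of $\wtilde K$ coincides with $\kappa_v$), I expect to produce a rank-one $q$-difference submodule in $\langle\cM_{K(x)}\rangle^\otimes$ whose generic Galois group contains $\mu_{p^{\ell+\ell''}}$, contradicting the exactness of \eqref{eq:sesRed} for $\cM_{K(x)}$ in which the non-reduced quotient is exactly $\mu_{p^\ell}$.

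The main obstacle is the reducedness step, and specifically the descent of the hypothetical rank-one $q^{1/p^\ell}$-difference submodule back from $\wtilde K(x^{1/p^\ell})$ to $K(x)$ through the iteration functor. Being contained in a reduced algebraic group does not by itself imply reducedness (witness $\mu_p\subset\mathbb{G}_m$ in characteristic $p$), so the inclusion established in the previous paragraph is not enough on its own; one must exploit the compatibility between the $q$- and $q^{1/p^\ell}$-difference Tannakian structures and the maximality of $\ell$ in \eqref{eq:sesRed}, and verify that the $p^\ell$-th iterate functor $\cN\mapsto\cN^{(\ell)}$ is faithful enough to pull non-reducedness at the $q^{1/p^\ell}$-level back down to a contradiction at the $q$-level.
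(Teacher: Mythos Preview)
Your first bullet and the paper's agree: both simply invoke Lemma~\ref{lemma:devissage2} together with Theorem~\ref{thm:genGaloisred}. For the inclusion in the second bullet you take a different route. The paper computes curvatures directly: since $\Sg_{q^{1/p^\ell}}(\ul e\otimes 1)=(\ul e\otimes 1)A(x)$ with the \emph{same} matrix $A(x)$ that represents $\Sgq$, the curvatures of $\cM_{\wtilde K(x^{1/p^\ell})}$ coincide (after base change) with the operators $\Sgq^{\kappa_v p^\ell}\otimes 1$ that characterize $Gal_{red}(\cM_{K(x)},\eta_{K(x)})$ via Theorem~\ref{thm:genGaloisred}, and the inclusion follows from Theorem~\ref{thm:genGalois}. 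Your Tannakian route via the character $\chi$, the associated rank-one module $\cL$, and Corollary~\ref{cor:finitegroups} is correct and more conceptual---it explains \emph{why} adjoining $q^{1/p^\ell}$ and $x^{1/p^\ell}$ kills precisely the $\mu_{p^\ell}$ quotient---but is longer than the paper's two-line computation.

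For reducedness the paper offers nothing beyond that same curvature identification, whereas you attempt a separate contradiction argument. That argument has a genuine gap exactly where you flag it: the iterate functor $\cN\mapsto\cN^{(\ell)}$ sends $q^{1/p^\ell}$-difference modules over $\wtilde K(x^{1/p^\ell})$ to $q$-difference modules \emph{still over $\wtilde K(x^{1/p^\ell})$}, not over $K(x)$. Applying it to your hypothetical $\cL''$ therefore does not land in $\langle\cM_{K(x)}\rangle^\otimes$, so the maximality of $\ell$ in \eqref{eq:sesRed} cannot be invoked. A descent of rank-one submodules along the purely inseparable extension $\wtilde K(x^{1/p^\ell})/K(x)$ would be required, and you have not supplied one; the paper's direct curvature approach sidesteps this difficulty entirely.
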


\begin{proof}
The first statement is a rewriting of Lemma \ref{lemma:devissage2}.
We have to prove the second statement.
If $\ul e$ is a basis of $M_{K(x)}$ such that $\Sgq\ul e=\ul e A(x)$,
then in $\cM_{K(x^{1/p^\ell})}=(M_{K(x^{1/p^\ell})},\Sg_{q^{1/p^\ell}}:=\Sgq\otimes\sg_{q^{1/p^\ell}})$
we have:
$$
\Sg_{q^{1/p^\ell}}(\ul e\otimes 1)=(\ul e\otimes 1)A(x).
$$
It follows that the generic Galois group $Gal(\cM_{K(x^{1/p^\ell})},\eta_{K(x^{1/p^\ell})})$ is the smallest
algebraic subgroup of $\GL(M_{K(x^{1/p^\ell})})$ that contains the operators
$\Sg_{q^{1/p^\ell}}^{\kappa p^\ell}=\Sgq^{\kappa_v p^\ell}\otimes 1$.
This proves that
$$
Gal(\cM_{\wtilde K(x^{1/p^\ell})},\eta_{\wtilde K(x^{1/p^\ell})})
\subset
Gal_{red}(\cM_{K(x)},\eta_{K(x)})
\otimes_{K(x)}\wtilde K(x^{1/p^\ell}).
$$
\end{proof}

\section{Grothendieck conjecture on $p$-curvatures for $q$-difference modules in characteristic zero, with $q\neq 1$}
\label{sec:modulescarzero}

Let $K$ be a a finitely generated extension of $\Q$ and $q\in K\smallsetminus\{0,1\}$.
The previous results, combined with an improved version of
\cite{DVInv}, give a ``curvature'' characterization of the generic Galois group of a
$q$-difference module over $K(x)$.
We will constantly distinguish three cases:
\begin{itemize}
\item
$q$ is a root of unity;
\item
$q$ is transcendental over $\Q$;
\item
$q$ is algebraic over $\Q$, but is not a root of unity.
\end{itemize}

\paragraph{Case 1: $q$ is a root of unity.}
If $q$ it a primitive root of unity of order $\kappa$,
it is not difficult to prove that:

\begin{prop}[{\cite[Proposition  2.1.2]{DVInv}}]\label{prop:GrothKatzroot}
A $q$-difference module $\cM_{K(x)}$ over $K(x)$ is trivial if and only if
$\Sgq^\kappa$ is the identity.
\end{prop}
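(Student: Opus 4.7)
The forward direction is immediate and would take one line: if $\cM_{K(x)}$ admits a basis $\ul f$ of $\Sgq$-fixed vectors, then $\Sgq^\kappa\ul f = \ul f$, and since $\sgq^\kappa$ is the identity on $K(x)$ (because $q^\kappa = 1$), the operator $\Sgq^\kappa$ is $K(x)$-linear, hence equals the identity on all of $M_{K(x)}$.

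For the converse, my plan is to apply Galois descent. I would first observe that $q\in K$ being a primitive $\kappa$-th root of unity guarantees that $K$ contains all $\kappa$-th roots of unity, and that $\sgq$ is an automorphism of $K(x)$ of order exactly $\kappa$. A short verification---either through Artin's theorem combined with the bound $[K(x):K(x^\kappa)]\leq\kappa$, or by inspecting Laurent expansions at $x=0$---would then identify the fixed field of $\sgq$ as $K(x^\kappa)$. Thus $K(x)/K(x^\kappa)$ is a cyclic Galois extension of degree $\kappa$ with group $G = \langle\sgq\rangle$.

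Next, the hypothesis $\Sgq^\kappa = 1$ means precisely that $\Sgq$ defines a $\sgq$-semilinear action of $G$ on the $K(x)$-vector space $M_{K(x)}$. I would then invoke Speiser's theorem (the triviality of $H^1(G,\GL_\nu(K(x)))$): every such semilinear representation descends, so writing $V := M_{K(x)}^{\Sgq}$ for the $K(x^\kappa)$-subspace of $\Sgq$-fixed vectors, the natural map
$$
K(x)\otimes_{K(x^\kappa)} V \longrightarrow M_{K(x)}
$$
is an isomorphism of $K(x)$-vector spaces. In particular $\dim_{K(x^\kappa)} V = \nu$, and any $K(x^\kappa)$-basis of $V$ yields a basis $\ul f$ of $M_{K(x)}$ over $K(x)$ satisfying $\Sgq\ul f = \ul f$, which is exactly the triviality of $\cM_{K(x)}$.

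No serious obstacle is anticipated: the only substantive input beyond Hilbert 90 is the identification of the fixed field, which is essentially bookkeeping. The hypothesis that $q$ is a \emph{primitive} $\kappa$-th root of unity (as opposed to just some $\kappa$-th root) plays exactly one role in this argument: it guarantees that $\sgq$ has order exactly $\kappa$, so that $G$ is the full Galois group of $K(x)/K(x^\kappa)$ and the descent statement applies.
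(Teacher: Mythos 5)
Your proof is correct. Since the paper merely cites \cite[Proposition 2.1.2]{DVInv} without reproducing the argument, there is no in-paper proof to compare against, but the route you take---identifying $K(x)/K(x^\kappa)$ as cyclic Galois of degree $\kappa$ with group $\langle\sgq\rangle$, interpreting $\Sgq^\kappa=1$ as giving a semilinear $G$-action, and invoking Speiser's theorem (vanishing of $H^1(G,\GL_\nu(K(x)))$) to descend to a basis of $\Sgq$-fixed vectors---is precisely the standard Galois-descent argument and is the one underlying the cited reference. Both directions are handled cleanly, and you correctly isolate the role of primitivity (it ensures $\sgq$ has order exactly $\kappa$, so that $\langle\sgq\rangle$ is the full Galois group over the fixed field).
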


\paragraph{Case 2: $q$ is transcendental over $\Q$.}
If $q$ is transcendental over $\Q$, we can always find an intermediate field $k$ of $K/\Q$
such that $K$ is a finite extension of $k(q)$. We are in the situation of Theorem \ref{thm:GrothKaz}, that we can rephrase
as follows:

\begin{thm}\label{thm:GrothKatztrans}
A $q$-difference module $\cM_{K(x)}=(M_{K(x)},\Sgq)$ over $K(x)$ is trivial if and only if there exists
a $k$-algebra $\cA$ (as in \eqref{eq:algebraA}) and a $\Sgq$-stable $\cA$-lattice $M$ of $M_{K(x)}$ such that
for almost all cyclotomic places $v\in\cC$ the $v$-curvature
$$
\Sgq^{\kappa_v}: M\otimes_\cA\cA/(\phi_v)
\longrightarrow M\otimes_\cA\cA/(\phi_v)
$$
is the identity.
\end{thm}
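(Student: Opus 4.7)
The plan is to deduce Theorem \ref{thm:GrothKatztrans} directly from Theorem \ref{thm:GrothKaz}, which is the same statement phrased over an arbitrary finite extension of $k(q)$ for $k$ perfect. The only real content lies in descending from the finitely generated extension $K/\Q$ to a suitable intermediate perfect field $k$ such that $K/k(q)$ is finite, so that the machinery of Sections 1--4 applies.

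First I would produce the field $k$. Since $K/\Q$ is finitely generated of some transcendence degree $d\geq 1$ and $q\in K$ is transcendental, I can extend $\{q\}$ to a transcendence basis $\{q,t_1,\dots,t_{d-1}\}$ of $K/\Q$. Let $k$ be the algebraic closure of $\Q(t_1,\dots,t_{d-1})$ inside $K$; equivalently, the relative algebraic closure of the subfield generated by the $t_i$. Then $K/k$ has transcendence degree $1$, generated by $q$, and since $K/\Q$ is finitely generated, so is $K/k(q)$; being algebraic and finitely generated, $K/k(q)$ is finite. Finally, we are in characteristic zero, so $k$ is automatically perfect. This places us precisely in the framework of Section 1, with base field $k$ and parameter $q$ transcendental over $k$.

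Next I would produce the $\cA$-lattice. Given the $q$-difference module $\cM_{K(x)}=(M_{K(x)},\Sgq)$ over $K(x)$, fix a $K(x)$-basis $\ul e$ of $M_{K(x)}$ and write $\Sgq\ul e=\ul e A(x)$ with $A(x)\in\GL_\nu(K(x))$. Since the entries of $A(x)$ and of $A(x)^{-1}$ involve only finitely many denominators and finitely many elements of $K$, there exists a nonzero polynomial $P(x)$ with coefficients in some finitely generated subring of $K$ over $k[q]$ (which, up to enlarging the integral closure, I can take equal to $\cO_K[x]$) such that, setting
$$
\cA=\cO_K\l[x,\frac 1{P(x)},\frac 1{P(qx)},\frac 1{P(q^2x)},\dots\r],
$$
the free $\cA$-module $M$ generated by $\ul e$ is stable by $\Sgq$ and $\Sgq^{-1}$. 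This gives the required $\Sgq$-stable $\cA$-lattice, and $\cM_{K(x)}$ is recovered by extension of scalars.

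Finally I would apply Theorem \ref{thm:GrothKaz} to $\cM=(M,\Sgq)$ over $\cA$. By the paragraph on $\kappa_v$-curvatures modulo $\phi_v$ (\S\ref{subsec:cyclocurvkappav}), the condition that $\Sgq^{\kappa_v}$ induces the identity on $M\otimes_\cA\cA/(\phi_v)$ is by definition the statement that $\cM$ has zero $\kappa_v$-curvature modulo $\phi_v$. Hence Theorem \ref{thm:GrothKaz} applied in the present setting (with $k$ as constructed in the first paragraph) yields that $\cM$ becomes trivial over $K(x)$ if and only if this vanishing holds for almost all $v\in\cC$, which is exactly the statement of Theorem \ref{thm:GrothKatztrans}. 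There is no genuine obstacle here: the only point to verify carefully is the construction of $k$, since the rest is a tautological translation. Independence of the particular choices of $k$, of basis and of $\cA$ follows as in Remark after Definition \ref{defn:contains}, as observed in \cite[10.1.2]{DVInv}.
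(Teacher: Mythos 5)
Your proof is correct and follows precisely the paper's own (very brief) argument: exhibit an intermediate field $k$ with $\Q\subseteq k\subset K$, $k$ perfect and $K/k(q)$ finite, and then apply Theorem~\ref{thm:GrothKaz} directly. The one detail the paper leaves implicit, namely the construction of $k$ from a transcendence basis $\{q,t_1,\dots,t_{d-1}\}$ of $K/\Q$ (with $q$ automatically transcendental over the relative algebraic closure of $\Q(t_1,\dots,t_{d-1})$ in $K$, and $K/k(q)$ finite because finitely generated and algebraic), is exactly what you supply.
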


\paragraph{Case 3: $q$ is algebraic over $\Q$, but not a root of unity.}
Finally if $q$ is algebraic, but not a root of unity, we are in the following situation.
We call $Q$ the algebraic closure of $\Q$ inside $K$,
$\cO_Q$ the ring of integer of $Q$, $v$ a finite places of $Q$ and $\pi_v$ a $v$-adic uniformizer.
For almost all $v$, the order $\kappa_v$ of $q$ modulo $v$, as a root of unity, and the positive integer power $\phi_v$ of
$\pi_v$, such that $\phi_v^{-1}(1-q^{\kappa_v})$
is a unit of $\cO_Q$, are well defined.
The field $K$ has the form $Q(\ul a,b)$, where $\ul a=(a_1,\dots,a_r)$ is a transcendent basis
of $K/Q$ and $b$ is a primitive element of the algebraic extension $K/Q(\ul a)$.
Choosing conveniently the set of generators $\ul a,b$,
we can always find an algebra $\cA$ of the form:
\beq\label{eq:algebraAalg}
\cA=\cO_Q\l[\ul a,b,x,\frac{1}{P(x)},\frac{1}{P(qx)},...\r],
\eeq
for some $P(x)\in \cO_Q\l[\ul a,b,x\r]$,
and a $\Sgq$-stable $\cA$-lattice $M$ of $\cM_{K(x)}$, so that
we can consider the
linear operator
$$
\Sgq^{\kappa_v}:M\otimes_\cA\cA/(\phi_v)\longrightarrow M\otimes_\cA\cA/(\phi_v),
$$
that we will call the $v$-curvature of $\cM_{K(x)}$-modulo $\phi_v$.
Notice that $\cO_Q/(\phi_v)$ is not an integral domain in general.
\par
We are going to prove the following:

\begin{thm}\label{thm:GrothKatzalg}
A $q$-difference module $\cM_{K(x)}=(M_{K(x)},\Sgq)$ over $K(x)$ is trivial if and only if there exists
a $k$-algebra $\cA$ as above and a $\Sgq$-stable $\cA$-lattice $M$ of $M_{K(x)}$ such that
for almost all finite places $v$ of $Q$ the $v$-curvature
$$
\Sgq^{\kappa_v}: M\otimes_\cA\cA/(\phi_v)
\longrightarrow M\otimes_\cA\cA/(\phi_v)
$$
is the identity.
\end{thm}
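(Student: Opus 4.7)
}

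The easy direction is standard: if $\cM_{K(x)}$ is trivial over $K(x)$, a horizontal basis $\ul f$ extends to a basis of the $\cA$-lattice $M$ after inverting finitely many elements of $\cO_Q[\ul a, b]$ (absorbed into an enlargement of $\cA$), and on this basis $\Sgq^{\kappa_v}$ reduces modulo $\phi_v$ to the identity for almost all finite places $v$ of $Q$. So I would focus on the converse.

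For the nontrivial direction, my plan is to reduce to the setting of \cite{DVInv} by specializing the transcendence basis $\ul a = (a_1,\dots,a_r)$ of $K/Q$. Writing $A(x) \in \GL_\nu(K(x))$ for the matrix of $\Sgq$ in a basis of $M$, there is a Zariski-open subset $U \subset \ol{Q}^r$ such that for each $\ul\alpha \in U(\ol{Q})$ the specialization $\ul a \mapsto \ul\alpha$ makes sense and yields a $q$-difference module $\cM_{\ul\alpha}$ over a number field $K_{\ul\alpha}(x) = Q(\ul\alpha, b_{\ul\alpha})(x)$, together with a $\Sgq$-stable lattice $M_{\ul\alpha}$ over the specialized ring $\cA_{\ul\alpha}$. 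Specialization sends places of $Q$ to places of $K_{\ul\alpha}$ compatibly with $\phi_v$ and $\kappa_v$, so the zero $v$-curvature hypothesis over $\cA$ descends to zero $w$-curvature for almost all finite places $w$ of $K_{\ul\alpha}$. The main theorem of \cite{DVInv} then yields that $\cM_{\ul\alpha}$ becomes trivial over $K_{\ul\alpha}(x)$ for every $\ul\alpha \in U(\ol{Q})$.

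To transport this family of triviality statements into triviality of $\cM_{K(x)}$ itself, I would use the analytic machinery of \cite{Sfourier}. First, the arguments of Section \ref{sec:formalsolution} (regularity via global nilpotence, followed by the rationality of exponents in Proposition \ref{prop:ratexp}, extended to this setting) show that $\cM_{K(x)}$ is regular singular at $0$ and $\infty$ with exponents in $q^\Z$. Fix now a complex embedding of $Q$ with $|q| \neq 1$ and work, for each choice of $\ul a$, over $\C(x)$ via an embedding of $K$ sending $\ul a$ to a transcendental tuple. Sauloy's construction produces canonical meromorphic fundamental solutions $Y_0(x)$ and $Y_\infty(x)$ on $\C^*$ for such regular singular systems, whose Birkhoff connection matrix $B(x) = Y_\infty(x)^{-1} Y_0(x)$ has elliptic entries on $\C^*/q^\Z$; triviality of the module corresponds to $B(x)$ being (up to conjugation) a constant matrix. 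The crucial point is that $Y_0$, $Y_\infty$ and hence $B$ depend holomorphically, in fact algebraically, on the parameter $\ul a$ (away from the degeneracy locus), so that the condition ``$B$ is constant'' cuts out a Zariski-closed subset of the parameter space. Since this condition holds on the Zariski-dense set $U(\ol{Q})$ by the previous step, it holds identically, so $\cM_{K(x)} \otimes \C(x)$ is trivial. A standard descent argument (triviality descends from $\C(x)$ to $K(x)$ via the trivial generic Galois group and Corollary \ref{cor:trivialgroup}) completes the proof.

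The main obstacle is the last step: rigorously controlling the dependence of Sauloy's canonical solutions on the parameters $\ul a$, and establishing that ``the Birkhoff matrix is trivializable'' is a Zariski-closed condition in the parameters. A subsidiary technical difficulty is the compatibility of the $v$-curvature hypothesis with specialization, namely choosing $U$ so that $\phi_v$ and the $\cA$-lattice specialize well for almost all $v$ simultaneously, which requires avoiding a countable union of bad loci and is handled by a Chevalley-type constructibility argument on the family.
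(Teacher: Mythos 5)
Your overall plan is the one the paper actually follows: reduce to $K = Q(\ul a)$ purely transcendental, deduce regular-singularity with exponents in $q^\Z$ from an algebraic-$q$ analogue of global nilpotence (this is Proposition~\ref{prop:globalnilpMS}, which cites \cite{DVInv} directly rather than adapting Proposition~\ref{prop:ratexp} — that lemma is specific to the transcendental-$q$ setting), build Sauloy's canonical solutions $Y_0, Y_\infty$ and the Birkhoff matrix $B$, specialize $\ul a \mapsto \ul\alpha$ to algebraic tuples to conclude via \cite{DVInv} that each $B^{(\ul\alpha)}$ is constant, and finally argue $B$ itself is constant so that $Y_0$ and $Y_\infty$ glue to a rational fundamental solution.

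Two remarks on the obstacle you yourself flag. First, the entries of $B$ do \emph{not} depend algebraically on $\ul a$: each entry $b(x)$ is an infinite product, so for fixed $x$ it is a transcendental function of the parameters, and ``$B$ is constant'' is not \emph{a priori} a Zariski-closed condition. What the paper exploits instead is that $b(x)$ is $q$-invariant with divisor supported on finitely many $q$-orbits, giving the factorization
$$
b(x)=\la\,\frac{\prod_\ga\prod_{n\geq 0}h_\ga(q^{-n}x) \prod_{n\geq 0}k_\ga(1/q^nx)}
{\prod_\de\prod_{n\geq 0}h_\de(q^{-n}x) \prod_{n\geq 0}k_\de(1/q^nx)},
$$
in which only $\la\in K$ and the finitely many minimal polynomials $h_\ga, k_\ga, h_\de, k_\de \in Q[\ul a, x]$ carry the parameter-dependence, and \emph{that} dependence is polynomial. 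Constancy of $b^{(\ul\alpha)}(x)$ forces cancellation between the specialized numerator and denominator, a Zariski-closed condition on $\ul\alpha$; holding on a Zariski-dense set, it holds identically, so $b(x)=\la$ already over $K$. Second, the paper sidesteps your detour through $\C(x)$ and the ensuing descent: it fixes a (not necessarily archimedean) norm on $K$ with $|q|>1$, for which the infinite products converge, and works over $K$ throughout, so that $B\in\GL_\nu(K)$ immediately gives a rational fundamental solution over $K(x)$.
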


The theorem above is proved in \cite{DVInv} under the assumption that $K$ is a number field, \ie that $Q=K$.
Here $K$ is only a finitely generated extension of $\Q$. The proof in this more general case is given in \S\ref{subsec:qalg} below.
Notice that the proofs in \cite{DVInv} require $K$ to be a number field, so that the proofs below relies crucially on
\cite{DVInv}, but is not a generalization of the arguments in \emph{loc. cit.}

\paragraph{A unified statement.}
In order to give a unified statement for the three theorems above we introduce the
following notation:
\begin{trivlist}

\item $\bullet$
if $q$ is a root of unity of order $\kappa$, we can take $\cC$ to be the set containing
only the trivial valuation $v$ on $K$, $\cA$ to be a $\sgq$-stable extension of $K[x]$ obtained inverting a convenient
polynomial, $(\phi_v)=(0)$ and $\kappa_v=\kappa$;

\item $\bullet$
if $q$ is transcendental, $\cC$ is the set of cyclotomic places as in the notation of the previous sections;

\item $\bullet$
if $q$ is algebraic, not a root of unity, we set $\cC$ to be the set of finite places of $Q$.
\end{trivlist}
Therefore we have:

\begin{thm}\label{thm:car0modules}
A $q$-difference module $\cM_{K(x)}=(M_{K(x)},\Sgq)$ over $K(x)$ is trivial if and only if there exists
a $k$-algebra $\cA$ as above and a $\Sgq$-stable $\cA$-lattice $M$ of $M_{K(x)}$ such that
for any $v$ in a cofinite nonempty subset of $\cC$, the $v$-curvature
$$
\Sgq^{\kappa_v}: M\otimes_\cA \cA/(\phi_v)
\longrightarrow M\otimes_\cA \cA/(\phi_v)
$$
is the identity.
\end{thm}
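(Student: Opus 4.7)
The ``only if'' direction is routine: in a basis witnessing triviality of $\cM_{K(x)}$ the operator $\Sgq$ acts as the identity, so after enlarging $\cA$ to contain the coefficients of a basis change all curvatures $\Sgq^{\kappa_v}$ reduce to the identity on $M\otimes_\cA\cA/(\phi_v)$. The plan is therefore to concentrate on the converse, splitting the argument according to the trichotomy (root of unity, transcendental, algebraic-not-root-of-unity) used to define $\cC$, $\phi_v$ and $\kappa_v$ in the statement.

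First I would dispose of the two easy cases. If $q$ is a primitive $\kappa$-th root of unity, $\cC$ is the single trivial valuation and $\cA/(\phi_v)=\cA$, so the hypothesis reads $\Sgq^\kappa=\mathrm{id}_M$; this is exactly the input of Proposition \ref{prop:GrothKatzroot}, which gives triviality. If $q$ is transcendental over $\Q$, let $k$ be the algebraic closure of $\Q$ inside $K$; then $K$ is a finite extension of $k(q)$, $k$ is perfect, and the cyclotomic places of $K$ and the definitions of $\phi_v$, $\kappa_v$ match those of Section~\ref{sec:trivsol}. The statement is then the content of Theorem \ref{thm:GrothKaz} (equivalently Theorem \ref{thm:GrothKatztrans}) and there is nothing further to prove.

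The substantive case, to be handled in \S\ref{subsec:qalg}, is the third one: $q$ algebraic but not a root of unity, with $K/\Q$ finitely generated of positive transcendence degree. When $K$ is a number field this is the main theorem of \cite{DVInv}; my strategy to extend it will be to write $K=Q(\ul a,b)$ with $Q=\overline{\Q}\cap K$, $\ul a$ a transcendence basis and $b$ a primitive element of $K/Q(\ul a)$, and then to \emph{specialize} the parameters $\ul a$ at Zariski-dense tuples of algebraic numbers $\ul a_0\in \overline{\Q}^{\,r}$. Over each good specialization the module descends to a $q$-difference module $\cM_0$ over a number field $Q(\ul a_0,b_0)(x)$; the curvature hypothesis is preserved, because $\Sgq^{\kappa_v}\equiv\mathrm{id}$ modulo $\phi_v$ is a condition on matrix entries living in $\cO_Q[\ul a,b,x,\dots]$ before specialization. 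Hence \cite{DVInv} gives triviality of each specialization $\cM_0$.

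The remaining, and main, obstacle is to lift triviality at ``enough'' specializations to triviality of $\cM_{K(x)}$ itself. Because $K/\Q$ has positive transcendence degree, there is no product formula on $K$ and the Borel--Dwork-style rationality argument of \cite[\S 8]{DVInv} is unavailable. To bypass this I plan to use Sauloy's canonical local solutions at $0$ and at $\infty$ together with the Birkhoff connection matrix $P(x;\ul a)$ constructed in \cite{Sfourier}, which depends meromorphically on $x$ and rationally on the parameters $\ul a$. Triviality of each specialization forces $P(x;\ul a_0)$ to be constant in $x$; a Zariski-density argument on the entries of $P(x;\ul a)$ should then force the generic connection matrix to be constant in $x$, which is equivalent to triviality of $\cM_{K(x)}$ over $K(x)$. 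This last rigidity step is where I expect the real technical difficulty to lie.
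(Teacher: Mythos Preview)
Your overall plan coincides with the paper's: dispose of the root-of-unity and transcendental cases by citing Proposition~\ref{prop:GrothKatzroot} and Theorem~\ref{thm:GrothKaz}, and in the remaining algebraic case specialize the transcendence basis $\ul a$ to algebraic numbers, apply \cite{DVInv} to each specialization, and transfer the conclusion back via Sauloy's canonical solutions and the Birkhoff matrix. Two points, however, need to be tightened before your scheme will actually run.

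First, you cannot build Sauloy's canonical solutions $Y_0,Y_\infty$ (and hence the connection matrix) until you know that $\cM_{K(x)}$ is regular singular at $0$ and $\infty$ with exponents in $q^\Z$. In the paper this is done \emph{before} any specialization, directly over the parametric base, by Proposition~\ref{prop:globalnilpMS}; only then can one fix a shearing transformation $S_0,S_\infty\in\GL_\nu(K[x,x^{-1}])$ and form the infinite products of Lemma~\ref{lemma:infproduct}. You should insert this step. (A minor related slip: in the transcendental case, taking $k=\overline{\Q}\cap K$ does not give $[K:k(q)]<\infty$ if $K$ has transcendence degree $>1$ over $\Q$; you need any intermediate $k$ with $K/k(q)$ finite.)

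Second, the Birkhoff matrix $B(x)$ does \emph{not} depend rationally on the parameters $\ul a$: its entries are meromorphic $q$-elliptic functions given by infinite products, so a Zariski-density argument on rational functions of $(\ul a,x)$ is not available as you phrased it. The paper's Step~4 gets around this differently. One observes that the zeros and poles of each entry $b(x)$ of $B(x)$ lie in finitely many $q$-orbits $\beta q^\Z$ with $\beta$ algebraic over $K$, giving an explicit factorization
\[
b(x)=\lambda\,\frac{\prod_\gamma\prod_{n\ge 0}h_\gamma(q^{-n}x)\prod_{n\ge 0}k_\gamma(q^{-n}/x)}{\prod_\delta\prod_{n\ge 0}h_\delta(q^{-n}x)\prod_{n\ge 0}k_\delta(q^{-n}/x)},
\]
where $h_\bullet,k_\bullet\in Q[\ul a,x]$ are minimal polynomials. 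Since for a Zariski-open set of $\ul\alpha\in\overline\Q^{\,r}$ the specialization $B^{(\ul\alpha)}(x)$ is constant (by \cite{DVInv}), the specialization of this factorization is forced to be trivial, and one concludes that the generic factorization, hence $B(x)$ itself, is constant. Replacing your ``rational dependence plus Zariski density'' sentence by this factorization argument is exactly the missing technical step you flagged.
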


\subsection{Proof of Theorem \ref{thm:GrothKatzalg}}
\label{subsec:qalg}

We only need to prove Theorem \ref{thm:GrothKatzalg} under the assumption that $K$ is not a number field.
The proof (\cf the two subsections below)
will repose on \cite[Theorem 7.1.1]{DVInv}, which is exactly the same statement plus the extra assumption that $K$ is a number field.

\paragraph{Global nilpotence.}
In this and in the following subsection we assume that:
$$
\hbox{$K$ is a transcendental
finitely generated extension of $\Q$ and $q$ is an algebraic number.}
\leqno{(\cH)}
$$

\begin{prop}\label{prop:globalnilpMS}
Under the hypothesis $(\cH)$, for a $q$-difference module $\cM_{K(x)}=(M_K(x),\Sgq)$ we have:
\begin{enumerate}
\item
If $\Sgq^{\kappa_v}$ induces a unipotent linear morphism on
$M\otimes_\cA\cA/(\pi_v)$ for infinitely many finite places $v$ of $Q$,
then the $q$-difference module $\cM_{K(x)}$ is regular singular.

\item
If there exists a set of finite places $v$ of $Q$ of Dirichlet density 1 such that
$\Sgq^{\kappa_v}$ induces a unipotent linear morphism on
$M\otimes_\cA\cA/(\pi_v)$,
then the $q$-difference module $\cM_{K(x)}$ is regular singular and its exponents at $0$ and $\infty$ are in $q^\Z$.

\item
If $\Sgq^{\kappa_v}$ induces the identity on
$M_\cA\otimes_\cA\cA/(\pi_v)$ for almost all finite places $v$ of $Q$ in a set of Dirichlet density $1$,
then the $q$-difference modules $\cM_{K((x))}$ and $\cM_{K((1/x))}$ are trivial.
\end{enumerate}
\end{prop}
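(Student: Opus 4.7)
The plan is to reduce to the case where $K$ is a number field, which is exactly \cite[Theorem 7.1.1]{DVInv}, by specializing a transcendence basis of $K/Q$. Write $K = Q(\ul a, b)$ with $\ul a = (a_1, \ldots, a_r)$ a transcendence basis of $K/Q$ and $b$ a primitive element of $K/Q(\ul a)$, fix a $\Sgq$-stable $\cA$-lattice $M$ with $\cA$ as in \eqref{eq:algebraAalg}, and a basis $\ul e$ of $M$ with $\Sgq\ul e = \ul e A(x)$, $A(x) \in \GL_\nu(\cA)$.

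First I would set up a specialization procedure. For $\ul\alpha \in \ol Q^r$ outside a suitable Zariski-closed proper subset (avoiding denominators in $A(x)$ and its $q$-iterates, and degeneracies of the minimal polynomial of $b$), the assignment $\ul a \mapsto \ul\alpha$ specializes $\cA$ to an algebra $\cA_{\ul\alpha}$ over a number field $K_{\ul\alpha} = Q(\ul\alpha, b_{\ul\alpha})$, yielding a $q$-difference module $\cM_{\ul\alpha}$ over $K_{\ul\alpha}(x)$. The $v$-curvature of $\cM_{\ul\alpha}$ modulo any place $w$ of $K_{\ul\alpha}$ above $v\in\cP_{Q,f}$ is obtained by reducing the generic $v$-curvature modulo $\pi_v$ and then specializing at $\ul a = \ul\alpha$; for a Zariski-dense set of $\ul\alpha$, the unipotence (resp. identity) hypothesis on $\Sgq^{\kappa_v}$ modulo $\pi_v$ acting on $M$ transfers to $\cM_{\ul\alpha}$ for essentially the same set of places (and so preserves the infinite-set and Dirichlet-density-$1$ conditions respectively). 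Applying \cite[Theorem 7.1.1]{DVInv} to each such $\cM_{\ul\alpha}$ then yields the desired conclusion in the number field case.

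The delicate step is to descend these properties from a Zariski-dense family of specializations to $\cM_{K(x)}$ itself. For Part 1 (regular singularity), I would exploit the fact that the Frobenius algorithm (\cf \cite[\S 1.1]{Sfourier}, \cite{vdPutSingerDifference}) is algebraic in the coefficients of $A(x)$: on a Zariski-open subset of the parameter space — avoiding eigenvalue collisions that would force modifications of the shearing step — it constructs a gauge transformation in $\GL_\nu(K[x,1/x])\cdot\GL_\nu(\ol K[[x]])$ conjugating $A(x)$ to a constant matrix $A_0$ with coefficients in a finite extension of $K$. Since the algorithm succeeds on a Zariski-dense set of specializations, it succeeds generically, yielding regular singularity at $0$; the same argument applied after the variable change $x\mapsto 1/x$ handles $\infty$. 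For Part 2, the exponents are eigenvalues of $A_0$, algebraic over $K$; their specializations at a Zariski-dense set of $\ul\alpha$ land in $q^\Z \subset \ol Q$, and since $q$ is itself algebraic this forces each such algebraic function to be a constant in $q^\Z$. For Part 3, under the stronger identity hypothesis the argument of Corollary \ref{cor:FormalSol} shows that $A_0$ is diagonalizable with eigenvalues in $q^\Z$, so a further gauge $C x^D$ (performable over $K$ since the eigenvalues are already in $q^\Z$) reduces $A_0$ to the identity, giving triviality over $K((x))$ and $K((1/x))$.

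The main obstacle will be ensuring that the Frobenius algorithm behaves uniformly over parameter space. The bad locus where the eigenvalue-matching step fails — or where eigenvalues collide in a way that forces additional shearing transformations — is a Zariski-closed proper subset, so generic specializations avoid it and the algorithm descends to the generic fiber. A related subtlety is that the iteration count of the algorithm depends on the $q^\Z$-classes of the eigenvalues of $A_0$, which could in principle vary with the specialization, but this is controlled by an a priori bound depending only on $\nu$ and an analogue of Lemma \ref{rationalgauge}.
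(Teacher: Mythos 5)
Your proof takes a genuinely different route from the paper's, and it also contains gaps that would need to be repaired. The paper proves this Proposition \emph{directly}, stating that the argument is the same as \cite[Theorem 6.2.2 and Proposition 6.2.3]{DVInv} (and Theorem \ref{thm:FormalSol}/Corollary \ref{cor:FormalSol} above): one chooses a basis $\ul e$ with $\Sgq\ul e=\ul e A(x)$, and the unipotence of $\Sgq^{\kappa_v}$ modulo $\pi_v$ forces $A(x)$ to have no poles at $0,\infty$ and forces $A(0),A(\infty)$ to be semisimple with eigenvalues in $q^\Z$. The point you miss is that no specialization is needed here: even though $K$ is transcendental over $Q$, the constraint $B_\ell^{\kappa_v}\equiv 0 \pmod{\pi_v}$ for infinitely many $v$ directly forces the coefficients of the characteristic polynomial of the leading matrix to vanish (they lie in a polynomial ring over $\cO_Q$, and an element congruent to $0$ modulo infinitely many $\pi_v$ is zero), hence $B_\ell$ is nilpotent. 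Similarly, an eigenvalue $\a$ of $A_0$ transcendental over $Q$ cannot reduce to a root of unity in $\cA/(\pi_v)$, so $\a$ is in fact algebraic over $Q$, and the Chebotarev-style argument of \cite{DVInv} applies verbatim. The specialization of $\ul a$ you propose is the method the paper reserves for the proof of Theorem \ref{thm:GrothKatzalg} (Step 4, rationality of the Birkhoff matrix), where it is genuinely necessary because the Borel--Dwork criterion of \cite{DVInv} requires the product formula on a number field. Proposition \ref{prop:globalnilpMS} does not need it.

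Beyond being a detour, your proposal has two concrete gaps. For Part 1, the statement that ``the Frobenius algorithm succeeds generically because it succeeds at a Zariski-dense set of $\ul\a$'' conflates a useful closedness observation with the algorithm itself: the Frobenius algorithm \emph{detects} regularity, it cannot certify it by counting specializations. The correct descent argument is that the nonnilpotent leading matrix $B_\ell$ furnished by Lemma \ref{rationalgauge} would have nilpotent specializations on a Zariski-dense set, and nilpotence being Zariski-closed forces $B_\ell$ nilpotent, a contradiction; you would need to say this. For Part 2, the step ``since $q$ is algebraic, a function with specializations in $q^\Z$ is a constant in $q^\Z$'' is not justified: $q^\Z$ is countable and unbounded, the eigenvalue is only well-defined modulo $q^\Z$ (so its $q^\Z$-representative can jump between specializations), and a non-constant algebraic function could \emph{a priori} hit $q^\Z$ on a dense set of parameters. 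You would need an additional argument, either a height/boundedness bound or, more simply, the direct observation above that the reductions force the eigenvalue to lie in $\ol Q$. Finally, your citation should be to \cite[Theorem 6.2.2 and Proposition 6.2.3]{DVInv} (the regularity and exponent statements), not Theorem 7.1.1, which is the triviality theorem relevant to Theorem \ref{thm:GrothKatzalg}.
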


We recall that a subset $S$ of the set of finite places $\cC$ of $Q$ has Dirichlet density 1 if
\beq\label{eq:density}
\limsup_{s\to 1^+}
\frac{\sum_{v\in S,v\vert p}p^{-sf_v}}{\sum_{v\in S_f,v\vert p}p^{-sf_v}}=1,
\eeq
where $f_v$ is the degree of the residue field of $v$ over $\mathbb F_p$.

\begin{proof}
The proof is the same as \cite[Theorem 6.2.2 and Proposition 6.2.3]{DVInv} (\cf also Theorem \ref{thm:FormalSol}
and Corollary \ref{cor:FormalSol} above).
The idea is that one has to choose a basis
$\ul e$ of $M_\cA$ such that
$\Sgq\ul e=\ul e A(x)$ for some $A(x)\in \GL_\nu(\cA)$.
Then the hypothesis on the reduction of $\Sgq^{\kappa_v}$ modulo
$\pi_v$ forces $A(x)$ not to have poles at $0$ and $\infty$.
Moreover we deduce that $A(0),A(\infty)\in \GL_\nu(K)$ are actually
semisimple matrices, whose eigenvalues are in $q^\Z$.
\end{proof}

\paragraph{End of the proof of Theorem \ref{thm:GrothKatzalg}.}
We assume $(\cH)$. We will deduce Theorem \ref{thm:GrothKatzalg} from the analogous results in
\cite{DVInv}, where $K$ is assumed to be a number field.
To do so, we will consider the transcendence basis of $K/Q$ as a set of parameter
that we will specialize in the algebraic closure of $Q$.
We will need the following (very easy) lemma:

\begin{lemma}\label{lemma:infproduct}
Let $F$ be a field and $q$ be an element of $F$, not a root of unity.
We consider a $q$-difference system $Y(qx)=A_0(x)Y(x)$ such that $A_0(x)\in \GL_\nu(F(x))$,
zero is not a pole of $A_0(x)$ and such that $A_0(0)$ is the
identity matrix.
Then, for any norm $|~|$ (archimedean or ultrametric) over $F$ such that $|q|>1$ the formal solution
$$
Z_0(x)=\Big(A_0(q^{-1}x)A_0(q^{-2}x)A_0(q^{-3}x)\dots\Big)
$$
of $Y(qx)=A_0(x)Y(x)$ is a germ of an analytic fundamental solution at zero having infinite
radius of meromorphy.\footnote{In the sense that the entries of $Z_0(x)$ are quotient of two entire
analytic functions with respect to $|~|$.}
\end{lemma}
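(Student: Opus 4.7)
The plan is to first verify that the formal infinite product is indeed a solution, then establish analyticity at $0$ via absolute convergence, next extend meromorphically to all of $F$ using the functional equation, and finally represent the entries as quotients of entire functions by factoring out a common polynomial denominator.

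\emph{Step 1: formal check.} A term-shift inside the product gives
$$
Z_0(qx)=A_0(x)\cdot A_0(q^{-1}x)A_0(q^{-2}x)\cdots=A_0(x)Z_0(x),
$$
so $Z_0$ is formally a fundamental solution.

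\emph{Step 2: convergence and analyticity near $0$.} Since $A_0$ has no pole at $0$ and $A_0(0)=I$, we write $A_0(x)=I+xB(x)$ with $B\in M_\nu(F[[x]])$ converging on a disk $|x|<R$ (for the norm $|~|$). Equip $M_\nu$ with a submultiplicative matrix norm. For $|x|<R$ and $n\geq 1$ we have $|q^{-n}x|<R$ and
$$
\l|A_0(q^{-n}x)-I\r|\leq |q|^{-n}|x|\cdot\sup_{|y|\leq R}|B(y)|,
$$
so $\sum_{n\geq 1}\l|A_0(q^{-n}x)-I\r|$ is dominated by a geometric series (using $|q|>1$) and the infinite product converges absolutely and uniformly on compacts of $\{|x|<R\}$ to an analytic matrix function. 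Its value at $0$ is $I$, so $Z_0(0)\in\GL_\nu(F)$ and $Z_0$ is a genuine fundamental solution near zero.

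\emph{Step 3: meromorphic continuation to the whole plane.} For an arbitrary $x$, choose $N$ so large that $|q^{-N}x|<R$. Iterating the functional equation backwards gives
$$
Z_0(x)=A_0(q^{-1}x)A_0(q^{-2}x)\cdots A_0(q^{-N}x)\,Z_0(q^{-N}x).
$$
The first factor is a finite product of rational matrix functions, hence meromorphic on all of $F$, while the second factor has already been shown analytic. Letting $N=N(x)$ vary, we obtain a well-defined meromorphic continuation of $Z_0$ with infinite radius of meromorphy.

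\emph{Step 4: writing $Z_0$ as a quotient of entire functions.} Choose a polynomial $p(x)\in F[x]$, normalized so that $p(0)=1$, which clears the denominators of all entries of $A_0(x)$; then $p(x)A_0(x)\in M_\nu(F[x])$ with constant term $I$. Pulling the scalar factors out (scalars commute with the matrix product),
$$
Z_0(x)=\frac{\prod_{n\geq 1}\l(p(q^{-n}x)A_0(q^{-n}x)\r)}{\prod_{n\geq 1}p(q^{-n}x)}.
$$
Both infinite products are of the form $\prod_n (C_n(x))$ where $C_n(x)$ is a polynomial matrix (resp.\ polynomial) whose deviation from the constant term is $O(|q|^{-n})$ uniformly on compacts; this is the standard convergence criterion for infinite products of entire functions. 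Hence numerator and denominator are entire (each entry is an entire function), proving the claim.

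The only delicate point, and the one I would be most careful about, is the quantitative convergence estimate in Step 2 on a fixed compact, because one has to control the matrix norm of $A_0(q^{-n}x)-I$ uniformly in $n$ — but this is immediate from $A_0(0)=I$ and the fact that $q^{-n}x$ stays in a single fixed disk of analyticity of $A_0$ for $n\geq 1$. Once this estimate is in hand, Steps 3 and 4 are essentially formal manipulations.
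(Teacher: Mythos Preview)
Your proof is correct and follows exactly the same two-step strategy as the paper: convergence of the infinite product near $0$ from $A_0(0)=I$ and $|q|>1$, then meromorphic continuation everywhere via the functional equation. The paper dispatches this in two sentences; your Steps 1--3 simply spell out the details, and your Step 4 makes explicit the footnote's claim that the entries are quotients of entire functions, which the paper leaves implicit.
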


\begin{proof}
Since $|q|>1$ the infinite product defining $Z_0(x)$ is convergent in the neighborhood
of zero. The fact that $Z_0(x)$ is a meromorphic function with infinite radius of meromorphy
follows from the functional equation $Y(qx)=A_0(x)Y(x)$ itself.
\end{proof}

\begin{proof}[Proof of Theorem \ref{thm:GrothKatzalg}]
One side of the implication in Theorem \ref{thm:GrothKatzalg} is trivial.
So we suppose that $\Sgq^{\kappa_v}$ induces the identity on
$M_\cA\otimes_\cA\cA/(\phi_v)$ for almost all finite places $v$ of $Q$, and we prove that
$\cM_\cA$ becomes trivial over $K(x)$.
The proof is divided into steps:

\begin{proof}[Step 0. Reduction to a purely transcendental extension $K/Q$]
Let $\ul a$ be a transcendence basis of $K/Q$ and $b$ is a primitive element
of $K/Q(\ul a)$, so that  $K=Q(\ul a,b)$.
The $q$-difference field $K(x)$ can be considered as a trivial $q$-difference module
over the field $Q(\ul a)(x)$. By restriction of scalars, the module $\cM_{K(x)}$
is also a $q$-difference module over $Q(\ul a)(x)$. Since the field $K(x)$ is a trivial $q$-difference module
over $Q(\ul a)(x)$, we have:
\begin{itemize}
\item
the module $\cM_{K(x)}$ is trivial over $K(x)$ if and only if it is trivial over
$Q(\ul a)(x)$;

\item
under the present hypothesis, there exist an algebra ${\cA^\p}$ of the form
\beq\label{eq:algebraAbis}
{\cA^\p}=\cO_Q\l[\ul a,x,\frac{1}{R(x)},\frac{1}{R(qx)},....\r],\, R(x)\in\cO_Q[\ul a,x],
\eeq
and a ${\cA^\p}$-lattice $\cM_{\cA^\p}$ of $q$-difference module $\cM_{K(x)}$ over $Q(\ul a)(x)$, such that
$\cM_{\cA^\p}\otimes_{\cA^\p} Q(\ul a,x)=\cM_{K(x)}$ as a $q$-difference module over $Q(\ul a,x)$
and $\Sgq^{\kappa_v}$ induces the identity on
$\cM_{\cA^\p}\otimes_\cA\cA/(\phi_v)$, for almost all places $v$ of $Q$.
\end{itemize}
For this reason, we can actually assume that $K$ is a purely transcendental
extension of $Q$ of degree $d>0$ and that $\cA=\cA^\p$.
We fix an immersion of $Q\hookrightarrow\ol\Q$, so that
we will think to the transcendental basis $\ul a$ as a set of parameter generically varying in
$\ol\Q^d$.
\end{proof}

\begin{proof}[Step 0bis. Initial data]
Let $K=Q(\ul a)$ and $q$ be a nonzero element of $Q$, which is not a root of unity.
We are given a $q$-difference module $\cM_\cA$ over a convenient algebra $\cA$ as above, such that $K(x)$ is
the field of fraction of $\cA$
and such that
$\Sgq^{\kappa_v}$ induces the identity on $M_\cA\otimes\cO_q/(\phi_v)$, for almost all finite places $v$.
We fix a basis $\ul e$ of $\cM_\cA$, such that
$\Sgq\ul e=\ul eA^{-1}(x)$, with $A(x)\in \GL_\nu(\cA)$.
We will rather work with the associated $q$-difference system:
\beq\label{eq:sysms}
Y(qx)=A(x)Y(x).
\eeq
It follows from Proposition \ref{prop:globalnilpMS} that $\cM_{K(x)}$ is regular
singular, with no logarithmic singularities, and that its exponents are in $q^\Z$.
Enlarging a little bit the algebra $\cA$ (more precisely replacing the polynomial $R$ by a multiple of $R$),
we can suppose that both $0$ and $\infty$ are not
poles of $A(x)$ and that $A(0),A(\infty)$ are diagonal matrices with
eigenvalues in $q^\Z$ (\cf \cite[\S2.1]{Sfourier}).
\end{proof}

\begin{proof}
[Step 1. Construction of canonical solutions at $0$]
We construct a fundamental matrix of solutions, applying the Frobenius al\-go\-rithm to this particular situation
(\cf \cite{vdPutSingerDifference} or \cite[\S1.1]{Sfourier}).
There exists a shearing transformation $S_0(x)\in \GL_\nu(K[x,x^{-1}])$ such that
$$
S_0^{-1}(qx)A(x)S_0(x)=A_0(x)
$$
and $A_0(0)$ is the identity matrix. In particular, the matrix $S_0(x)$ can be written as a
product of invertible constant matrices and diagonal matrix with integral powers of $x$ on the diagonal.
Once again, up to a finitely generated extension of the algebra $\cA$, obtained inverting a convenient polynomial, we can suppose that
$S_0(x)\in \GL_\nu(\cA)$.
\par
Notice that, since $q$ is not a root of unity, there always exists
a norm, non necessarily archimedean, on $Q$ such that $|q|>1$.
We can always extend such a norm to $K$.
Then the system
\beq\label{eq:sysmsZ}
Z(qx)=A_0(x)Z(x)
\eeq
has a unique convergent solution $Z_0(x)$, as in Lemma \ref{lemma:infproduct}.
This implies that $Z_0(x)$ is a germ of a meromorphic function
with infinite radius of meromorphy.
So we have the following meromorphic solution of $Y(qx)=A(x)Y(x)$:
$$
Y_0(x)=\Big(A_0(q^{-1}x)A_0(q^{-2}x)A_0(q^{-3}x)\dots\Big)\,S_0(x).
$$
We remind that this formal infinite product represent a meromophic fundamental solution
of $Y(qx)=A(x)Y(x)$ for any norm over $K$ such that $|q|>1$ (\cf Lemma \ref{lemma:infproduct}).
\end{proof}

\begin{proof}
[Step 2. Construction of canonical solutions at $\infty$]
In exactly the same way we can construct a solution at $\infty$ of the form
$Y_\infty(x)=Z_\infty(x)S_\infty(x)$, where
the matrix $S_\infty$ belongs to $GL_\nu(K[x,x^{-1}])\cap \GL_\nu(\cA)$ and has the same form as $S_0(x)$, and
$Z_\infty(x)$ is analytic in a neighborhood of $\infty$, with $Z_\infty(\infty)=1$:
$$
Y_\infty(x)=\Big(A_\infty(x)A_\infty(qx)A_\infty(q^2x)\dots\Big)\,S_\infty(x).
$$
\end{proof}

\begin{proof}
[Step 3. The Birkhoff matrix]
To summarize we have constructed two fundamental matrices of solutions, $Y_0(x)$ at zero
and $Y_\infty(x)$ at $\infty$,
which are meromorphic over $\mathbb A^1_K\smallsetminus\{0\}$
for any norm on $K$ such that $|q|>1$, and
such that their set of poles and zeros is contained in the
$q$-orbits of the set of poles at zeros of $A(x)$.
The Birkhoff matrix
$$
B(x)=Y_0^{-1}(x)Y_\infty(x)=S_0(x)^{-1}Z_0(x)^{-1}Z_\infty(x)S_\infty(x)
$$
is a meromorphic matrix on $\mathbb A^1_K\smallsetminus\{0\}$ with elliptic entries: $B(qx)=B(x)$.
All the zeros and poles of $B(x)$, other than $0$ and $\infty$, are
contained in the $q$-orbit of zeros and poles of the matrices $A(x)$ and $A(x)^{-1}$.
\end{proof}

\begin{proof}
[Step 4.Rationality of the Birkhoff matrix]
Let us choose $\ul\a=(\a_1,\dots,\a_r)$, with $\a_i$ in the algebraic closure $\ol\Q$ of $Q$, so that we can specialize
$\ul a$ to $\ul\a$ in the coefficients of $A(x),A(x)^{-1},S_0(x),S_\infty(x)$ and that the
specialized matrices are still invertible.
Then we obtain a $q$-difference system with coefficients in $Q(\ul\a)$.
It follows from Lemma \ref{lemma:infproduct} that
for any norm on $Q(\ul\a)$ such that
$|q|>1$ we can specialize $Y_0(x),Y_\infty(x)$ and therefore $B(x)$ to matrices with meromorphic entries on $Q(\ul\a)^*$.
We will write $A^{(\ul\a)}(x)$, $Y_0^{(\ul\a)}(x)$, etc. for the specialized matrices.
\par
Since $A_{\kappa_v}(x)$ is the identity modulo $\phi_v$, the same holds for $A^{(\ul\a)}_{\kappa_v}(x)$. Therefore
the reduced system has zero $\kappa_v$-curvature modulo $\phi_v$ for almost all $v$.
We know from \cite{DVInv}, that $Y_0^{(\ul\a)}(x)$ and $Y_\infty^{(\ul\a)}(x)$ are the germs at
zero of rational functions, and therefore that $B^{(\ul\a)}(x)$ is a constant matrix
in $\GL_\nu(Q(\ul\a))$.
\par
As we have already pointed out, $B(x)$ is $q$-invariant meromorphic matrix
on $\P^1_K\smallsetminus\{0,\infty\}$. The set of its poles and zeros is the union of a finite numbers of $q$-orbits of the forms
$\be q^\Z$, such that $\be$ is algebraic over $K$ and is a pole or a zero of $A(x)$ or $A(x)^{-1}$.
If $\be$ is a pole or a zero of an entry $b(x)$ of $B(x)$ and $h_\be(x),k_\be(x)\in Q[\ul a,x]$ are the minimal polynomials of $\be$ and $\be^{-1}$
over $K$, respectively,
then we have:
$$
b(x)=\la\frac{\prod_\ga\prod_{n\geq 0}h_\ga(q^{-n}x) \prod_{n\geq 0}k_\ga(1/q^nx)}
{\prod_\de\prod_{n\geq 0}h_\de(q^{-n}x) \prod_{n\geq 0}k_\de(1/q^nx)},
$$
where $\la\in K$ and $\ga$ and $\de$ vary in a system of representatives of the $q$-orbits of the zeroes and the poles of $b(x)$, respectively.
We have proved that there exists a Zariski open set of $\ol\Q^d$ such that the specialization of $b(x)$ at any point of this set
is constant. Since the factorization written above must specialize to a convergent factorization of the same form of the corresponding
element of $B^{\ul\a}(x)$,
we conclude that $b(x)$, and therefore $B(x)$ is a constant.
\end{proof}

The fact that $B(x)\in \GL(K)$ implies that the solutions $Y_0(x)$ and $Y_\infty(x)$ glue to a meromorphic solution
on $\P^1_K$ and ends the proof of Theorem \ref{thm:GrothKatzalg}.
\end{proof}

\subsection{Generic Galois group}
\label{subsec:characomplex}

For any field $K$ of zero characteristic, any $q\in K\smallsetminus\{0,1\}$ and any $q$-difference module
$\cM_{K(x)}=(M_{K(x)},\Sgq)$ we can define
as in the previous sections the generic Galois group $Gal(\cM_{K(x)},\eta_{K(x)})$.
If $K$ is a finitely generated extension of $\Q$, in the notation of Theorem \ref{thm:car0modules}, we have:

\begin{thm}\label{thm:car0modulesgenGalois}
The generic Galois group $Gal(\cM_{K(x)},\eta_{K(x)})$ is the smallest
algebraic subgroup of $\GL(M_{K(x)})$ that contains the $v$-curvatures of the $q$-difference module $\cM_{K(x)}$ modulo $\phi_v$,
for all $v$ in a nonempty cofinite subset of $\cC$.
\end{thm}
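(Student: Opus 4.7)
The plan is to follow closely the strategy used to prove Theorem \ref{thm:genGalois}, substituting the unified triviality criterion of Theorem \ref{thm:car0modules} for Theorem \ref{thm:GrothKaz}. Indeed the three cases (root of unity, transcendental, algebraic non root of unity) differ only in the nature of $\cC$ and $\phi_v$, but Theorem \ref{thm:car0modules} provides a single trivialization statement valid in all three, and this is the only external input specific to the arithmetic of $q$. The argument then becomes a purely tannakian manipulation on top of this triviality criterion.

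First I would prove the easy inclusion, i.e.\ the analog of Lemma \ref{lemma:genGalois}: the group $Gal(\cM_{K(x)},\eta_{K(x)})$ contains the $v$-curvatures modulo $\phi_v$ for almost all $v\in\cC$. As in the proof of Lemma \ref{lemma:genGalois}, this reduces to observing that the operator $\Sgq^{\kappa_v}$ stabilizes (modulo $\phi_v$) every $q$-difference submodule appearing in any object of $\langle\cM_{K(x)}\rangle^\otimes$, together with the fact, recorded in Remark \ref{rmk:noetherianity}, that one may define $Gal(\cM_{K(x)},\eta_{K(x)})$ as the stabilizer of a $q$-difference submodule, hence \emph{a fortiori} of an $\cA$-lattice stable under $\Sgq^{\kappa_v}$ modulo $\phi_v$.

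For the reverse inclusion, let $G\subset\GL(M_{K(x)})$ be the smallest algebraic subgroup containing these $v$-curvatures (well-defined by noetherianity of $\GL(M_{K(x)})$). By Chevalley's theorem and the same line-improvement trick used in Remark \ref{rmk:noetherianity} and in the proof of Theorem \ref{thm:genGalois}, one can present $G$ as the stabilizer of a line $L_{K(x)}$ sitting inside some object $\cW_{K(x)}\in\langle\cM_{K(x)}\rangle^\otimes$, and then, working inside $\cW_{K(x)}^*\otimes\cW_{K(x)}$ (whose $q$-difference structure recalled at the start of \S\ref{sec:genericgaloisgroup} makes $\Sgq$ act on $\mathrm{Hom}$ by conjugation), replace $L_{K(x)}$ by a line $L'_{K(x)}$ on which $\Sgq^{\kappa_v}$ acts as the identity modulo $\phi_v$ for almost all $v\in\cC$ and which still defines $G$ as stabilizer. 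Let $\cW'_{K(x)}$ be the smallest $q$-difference submodule of $\cW_{K(x)}^*\otimes\cW_{K(x)}$ containing $L'_{K(x)}$, and pick an $\cA$-lattice on which the curvature condition still holds; Theorem \ref{thm:car0modules} applied to $\cW'_{K(x)}$ shows that $\cW'_{K(x)}$ becomes trivial over $K(x)$. Hence $Gal(\cW'_{K(x)},\eta_{K(x)})=\{1\}$ by Corollary \ref{cor:trivialgroup} (whose proof in all three cases is formal once Theorem \ref{thm:car0modules} is granted), and the functorial surjection $Gal(\cM_{K(x)},\eta_{K(x)})\twoheadrightarrow Gal(\cW'_{K(x)},\eta_{K(x)})$ coming from the inclusion $\langle\cW'_{K(x)}\rangle^\otimes\subset\langle\cM_{K(x)}\rangle^\otimes$ forces $Gal(\cM_{K(x)},\eta_{K(x)})$ to act trivially on $\cW'_{K(x)}$, hence to fix $L'_{K(x)}$, hence to sit inside $G$.

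The only genuinely delicate point is checking that Definition \ref{defn:contains} and the line-improvement step of Remark \ref{rmk:noetherianity} still work uniformly in all three cases; this is essentially formal, but it must be verified that in Case~3 (where $q$ is algebraic but not a root of unity and $K/\Q$ is only finitely generated), the reduction modulo $\phi_v$ is well-defined on $\cA$-lattices of algebraic constructions of $\cM_{K(x)}$, and that the notion ``contains the operators $\Sgq^{\kappa_v}$ modulo $\phi_v$ for almost all $v\in\cC$'' does not depend on the choices of $\cA$, of the lattice, or of the defining line. This is the analog of \cite[10.1.2]{DVInv} and, granted the algebra $\cA$ from \eqref{eq:algebraAalg}, is a routine verification since passing to a larger $\cA$ changes only finitely many places. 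The hard analytic content of the theorem is therefore entirely concentrated in Theorem \ref{thm:car0modules}; once that is available the present statement is a tannakian corollary.
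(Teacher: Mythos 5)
Your proposal is correct and takes essentially the same approach as the paper: the paper disposes of the first three cases by citation (Hendriks's thesis for $q$ a root of unity, the previous sections for $q$ transcendental, and \cite{DVInv} for $q$ algebraic with $K$ a number field) and notes that the remaining case ($q$ algebraic, $K$ finitely generated but not a number field) follows exactly the tannakian argument of Theorem \ref{thm:genGalois}. You instead run that same tannakian argument uniformly in all cases with Theorem \ref{thm:car0modules} as the single arithmetic input, which is precisely what the cited proofs do internally, so the mathematics is identical even though your presentation is more self-contained.
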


The group $Gal(\cM_{K(x)},\eta_{K(x)})$ is a stabilizer of a line
$L_{K(x)}$ in a construction $\cW_{K(x)}=(W_{K(x)},\Sgq)$ of $\cM_{K(x)}$.
The statement above says that we can find a $\sgq$-stable algebra $\cA\subset K(x)$ of one of the forms described above,
and a $\Sgq$-stable $\cA$-lattice $M$ of $M_{K(x)}$ such that
$M$ induces an $\cA$-lattice $L$ of $L_{K(x)}$ and $W$ of $W_{K(x)}$ with the following properties:
the reduction modulo $\phi_v$ of $\Sgq^{\kappa_v}$ stabilizes $L\otimes_\cA\cA/(\phi_v)$ inside $W\otimes_\cA\cA/(\phi_v)$,
for any $v$ in a nonempty cofinite subset of $\cC$.
\par
Theorem \ref{thm:car0modulesgenGalois} has been proved in \cite[Chapter 6]{Hendrikstesi} when $q$ is a root of unity,
in the previous sections when $q$ is transcendental and in \cite{DVInv}
when $q$ is algebraic and $K$ is a number field. The remaining case
(\ie $q$ algebraic and $K$ is transcendental finitely generated over $\Q$)
is proved exactly as Theorem \ref{thm:genGalois} and
\cite[Theorem 10.2.1]{DVInv}. \par

\subsection{Generic
Galois group of a $q$-difference module over $\C(x)$, for $q\not=0,1$}
\label{subsec:complexmodules}

We deduce from the previous section a curvature characterization of the generic Galois group of a
$q$-difference module over $\C(x)$, for $q\in\C\smallsetminus\{0,1\}$.\footnote{All the statements in this subsection
remain true if one replace $\C$ with any field of characteristic zero.}

\medskip
Let $\cM_{\C(x)}=(M_{\C(x)},\Sgq)$ be a $q$-difference module over $\C(x)$.
We can consider a finitely generated extension of $K$ of $\Q$ such that there exists a
$q$-difference module $\cM_{K(x)}=(M_{K(x)},\Sgq)$ satisfying
$\cM_{\C(x)}=\cM_{K(x)}\otimes_{K(x)}\C(x)$.
First of all let us notice that:

\begin{lemma}
The $q$-difference module $\cM_{\C(x)}=(M_{\C(x)},\Sgq)$ is trivial if and only if
$\cM_{K(x)}$ is trivial.
\end{lemma}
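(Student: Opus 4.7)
The forward implication is immediate: a trivializing $K(x)$-basis of $\cM_{K(x)}$ remains $\Sgq$-invariant after the base change to $\C(x)$. For the converse I fix a $K(x)$-basis of $M_{K(x)}$ and write the associated system as $\sgq(Y)=AY$ with $A\in\GL_\nu(K(x))$; triviality of $\cM_{\C(x)}$ provides a fundamental matrix $Y_\C\in\GL_\nu(\C(x))$, and the whole task is to construct one in $\GL_\nu(K(x))$.

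Only finitely many complex numbers appear in the entries of $Y_\C$, so $Y_\C\in\GL_\nu(L(x))$ for some finitely generated subfield $L\subset\C$ containing $K$. After replacing $L$ by a larger finitely generated subfield of $\C$ if necessary, I may assume the algebraic closure $K_1$ of $K$ inside $L$ is already Galois over $K$, and write $L=K_1(\alpha_1,\dots,\alpha_r)$ with $\alpha_1,\dots,\alpha_r$ algebraically independent over $K_1$. The plan is then a two-step descent from $L$ to $K$ through the intermediate field $K_1$.

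For the transcendental step, the matrix $A$ is independent of the $\alpha_i$ and $\sgq$ fixes them, so the identity $\sgq(Y_\C)=AY_\C$ is an identity of rational functions in $\alpha_1,\dots,\alpha_r$ with coefficients in $K_1(x)$. Specializing $(\alpha_1,\dots,\alpha_r)\mapsto(c_1,\dots,c_r)$ at a Zariski-generic point of $K_1^r$ (which exists since $K_1\supseteq\Q$ is infinite, chosen to avoid both the denominators of the entries of $Y_\C$ and the zero locus of $\det Y_\C$) produces $Y_1\in\GL_\nu(K_1(x))$ with $\sgq(Y_1)=AY_1$. For the Galois step, each $\tau\in\mathrm{Gal}(K_1/K)$ sends $Y_1$ to another fundamental matrix of the same system, so $C_\tau:=Y_1^{-1}\tau(Y_1)$ has $\sgq$-invariant entries and $\tau\mapsto C_\tau$ is a $1$-cocycle. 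Hilbert 90 for $\GL_\nu$ trivializes this cocycle as $C_\tau=B^{-1}\tau(B)$, and the corrected matrix $Y_K:=Y_1B^{-1}$ is Galois-invariant, hence lies in $\GL_\nu(K(x))$, while still satisfying $\sgq(Y_K)=AY_K$ because $B$ has $\sgq$-invariant entries.

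The main subtlety to monitor is that the constants of $K_1(x)$ under $\sgq$ depend on the nature of $q$. When $q$ is not a root of unity these constants equal $K_1$, so $C_\tau\in\GL_\nu(K_1)$ and classical Hilbert 90 applies directly; when $q$ is a root of unity of order $\kappa$ they equal $K_1(x^\kappa)$, and Hilbert 90 must be invoked on the finite Galois extension $K_1(x^\kappa)/K(x^\kappa)$, which has the same Galois group $\mathrm{Gal}(K_1/K)$ as $K_1/K$. Apart from this point the argument is uniform across the three possible natures of $q$ and does not depend on whether $K/\Q$ has positive transcendence degree.
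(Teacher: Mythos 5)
Your route---reduce to a finitely generated coefficient field $L$, specialize a transcendence basis, then descend by Hilbert 90---is genuinely different from the paper's. The paper instead observes that triviality over $\C(x)$ forces the system to be regular singular with exponents in $q^\Z$, builds a formal fundamental solution $\wtilde Y\in\GL_\nu(K((x)))$ by the Frobenius algorithm, and notes that the given rational solution can only differ from $\wtilde Y$ by a matrix in $\GL_\nu(\C)$, whence $\wtilde Y$ must already be the Laurent expansion of a matrix in $\GL_\nu(K(x))$.

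Your argument, however, contains a genuine gap: the claim that $L$ may be enlarged, inside $\C$, to a field of the form $K_1(\alpha_1,\dots,\alpha_r)$ with $K_1/K$ finite Galois and $\alpha_1,\dots,\alpha_r$ algebraically independent over $K_1$. Having $K_1$ algebraically closed in $L$ does not make $L/K_1$ purely transcendental, and enlarging $L$ inside $\C$ cannot force this either: it would mean that a $K$-model of $L$ becomes unirational after a finite constant extension, which is false in general. For instance, if $\alpha\in\C$ is transcendental over $K$ and $\sqrt{\alpha^3-1}$ happens to occur among the coefficients of $Y_\C$, then $L$ contains the function field of an elliptic curve $E$ over $K$, and no such field embeds over a field $K_1$ into $K_1(\gamma_1,\dots,\gamma_s)$: a dominant rational map $\P^s\dashrightarrow E$ would pull the invariant differential of $E$ back to a nonzero global $1$-form on $\P^s$, which has none. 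The repair keeps your strategy but drops pure transcendence: choose a finitely generated $K$-subalgebra $B\subset L$ with fraction field $L$, large enough that for a closed point of $\operatorname{Spec}(B)$ outside a suitable proper closed subset the matrices $Y_\C$ and $Y_\C^{-1}$ specialize to matrices over $\kappa(x)$, where the residue field $\kappa$ is finite over $K$ by the Nullstellensatz; then pass to the Galois closure of $\kappa/K$ and run your Hilbert~90 argument exactly as written. With this modification your proof becomes a correct and self-contained alternative to the paper's.
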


\begin{proof}
If $\cM_{K(x)}$ is trivial, then $\cM_{\C(x)}$ is of course trivial.
The inverse statement is equivalent to the following claim. If a
linear $q$-difference system $Y(qx)=A(x)Y(x)$, with $A(x)\in \GL_\nu(K(x))$, has a
fundamental solution $Y(x)\in \GL_\nu(\C(x))$, then $Y(x)$ is actually defined over $K$.
In fact, the system $Y(qx)=A(x)Y(x)$ must be regular singular with exponents in $q^\Z$, therefore the Frobenius algorithm
allows to construct a solution $\wtilde Y(x)\in \GL_\nu(K((x)))$. We can look at $Y(x)$ as an element of $\GL_\nu(\C((x)))$.
Then there must exists a constant matrix
$C\in \GL_\nu(\C)$ such that $Y(x)=C\wtilde Y(x)$. This proves that $\wtilde Y(x)$ is the expansion of a matrix with entries in $K(x)$.
\end{proof}

With an abuse of language, Theorem \ref{thm:car0modules} can be rephrased as:

\begin{thm}\label{thm:complexmodules}
The $q$-difference module $\cM_{\C(x)}=(M_{\C(x)},\Sgq)$ is trivial if and only if
there exists a nonempty cofinite set of curvatures of $\cM_{K(x)}$, that are all zero.
\end{thm}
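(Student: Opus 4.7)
The plan is to combine the preceding lemma with Theorem \ref{thm:car0modules}, the only subtle point being to make sense of ``curvatures of $\cM_{K(x)}$'' in the three possible cases for $q$ (root of unity, transcendental, or algebraic non-root-of-unity over $\Q$).

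First I would produce the descent datum. Fix a basis of $M_{\C(x)}$ in which $\Sgq$ is represented by a matrix $A(x)\in\GL_\nu(\C(x))$. Only finitely many coefficients of $\C$ occur in the entries of $A(x)$ and $A(x)^{-1}$, so I can let $K\subset\C$ be the subfield of $\Q$ generated by these coefficients together with $q$. Then $K$ is finitely generated over $\Q$, $q\in K\smallsetminus\{0,1\}$, and $A(x)\in\GL_\nu(K(x))$ defines a $q$-difference module $\cM_{K(x)}$ over $K(x)$ such that $\cM_{\C(x)}=\cM_{K(x)}\otimes_{K(x)}\C(x)$.

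Next I would invoke the lemma just proved to reduce the triviality question: $\cM_{\C(x)}$ is trivial over $\C(x)$ if and only if $\cM_{K(x)}$ is trivial over $K(x)$. Now Theorem \ref{thm:car0modules} applies directly to $\cM_{K(x)}$: one picks a convenient $\sgq$-stable $\cO$-algebra $\cA\subset K(x)$ (of the appropriate form depending on which of the three cases $q$ falls into) and a $\Sgq$-stable $\cA$-lattice $M$ of $M_{K(x)}$, and the theorem says that triviality over $K(x)$ is equivalent to the vanishing of the $v$-curvature $\Sgq^{\kappa_v}$ on $M\otimes_\cA\cA/(\phi_v)$ for every $v$ in a nonempty cofinite subset of $\cC$. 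Chaining the two equivalences gives exactly the statement of Theorem \ref{thm:complexmodules}.

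There is essentially no obstacle; the only thing to check is that the descent to $K$ does not depend in an essential way on choices. That is why the statement is phrased ``with an abuse of language'': the set $\cC$ of places and the notion of $v$-curvature depend on $K$ and $\cA$, but since any two choices of $(K,\cA)$ admit a common refinement (replace $K$ by a finitely generated extension containing both, and replace $\cA$ by a localization containing both), the vanishing of almost all curvatures is independent of the choice and the equivalence of the lemma above transports it faithfully to the triviality of $\cM_{\C(x)}$. No new technical work is required beyond what has been established in Sections \ref{sec:trivsol}--\ref{sec:modulescarzero}.
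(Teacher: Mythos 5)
Your proposal is correct and follows exactly the same route as the paper: construct a finitely generated field of definition $K$, apply the lemma on descent of triviality from $\C(x)$ to $K(x)$, and conclude by Theorem \ref{thm:car0modules}. The paper treats this as an immediate corollary; your added remark about independence of the choice of $(K,\cA)$ is a reasonable elaboration of the paper's ``abuse of language'' caveat.
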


We can of course define as in the previous sections a generic Galois group $Gal(\cM_{K(x)},\eta_{K(x)})$.
A noetherianity argument, that we have already used several times,
shows the following:

\begin{prop}\label{prop:finitegenextension}
In the notation above we have:
$$
Gal(\cM_{\C(x)},\eta_{\C(x)})
\subset Gal(\cM_{K(x)},\eta_{K(x)})\otimes_{K(x)}\C(x).
$$
Moreover there exists a finitely generated extension $K^\p$ (resp. $K^{\p\p}$) of $K$ such that
$$
Gal(\cM_{K(x)}\otimes_{K(x)}{K^\p(x)},\eta_{K^\p(x)})\otimes_{K^\p(x)}\C(x)\cong Gal(\cM_{\C(x)},\eta_{\C(x)}).
$$
\end{prop}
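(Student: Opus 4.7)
The plan has two parts. For the first inclusion, I will use the noetherianity remark (Remark \ref{rmk:noetherianity}) which produces a finite family of $q$-difference submodules $\cW_{K(x)}^{(i)}\subset \cM_{K(x)}^{(i)}$ in algebraic constructions of $\cM_{K(x)}$ such that $Gal(\cM_{K(x)},\eta_{K(x)})=Stab(W_{K(x)}^{(i)},i)$. Extending scalars from $K(x)$ to $\C(x)$ is exact, so each $W_{K(x)}^{(i)}\otimes_{K(x)}\C(x)$ embeds as a $\C(x)$-subspace of the corresponding construction $\cM_{\C(x)}^{(i)}=\cM_{K(x)}^{(i)}\otimes_{K(x)}\C(x)$, and it is stable under $\Sgq\otimes\sgq$ because $W_{K(x)}^{(i)}$ is $\Sgq$-stable; hence it is a $q$-difference submodule over $\C(x)$. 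Since $Gal(\cM_{\C(x)},\eta_{\C(x)})$ stabilises \emph{every} $q$-difference submodule appearing in a construction of $\cM_{\C(x)}$, it stabilises in particular these extended ones. Observing that the stabiliser of $\bigl(W_{K(x)}^{(i)}\bigr)_i$ in $\GL(M_{K(x)})$ base-changes to the stabiliser of their extensions in $\GL(M_{\C(x)})$, one obtains
\[
Gal(\cM_{\C(x)},\eta_{\C(x)})\subset Stab\bigl(W_{K(x)}^{(i)}\otimes\C(x),i\bigr)=Gal(\cM_{K(x)},\eta_{K(x)})\otimes_{K(x)}\C(x).
\]

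For the isomorphism, I would apply Remark \ref{rmk:noetherianity} downstairs, on the $\C$-side: the group $Gal(\cM_{\C(x)},\eta_{\C(x)})$ is the stabiliser of a finite family of $q$-difference submodules $\cV^{(j)}_{\C(x)}\subset \cM^{(j)}_{\C(x)}$. Each $V^{(j)}_{\C(x)}$ is cut out, inside the finite-dimensional $\C(x)$-vector space $M^{(j)}_{\C(x)}$, by finitely many linear equations with coefficients in $\C(x)$. Collecting all these coefficients gives a finite subset of $\C(x)$, and I let $K'$ be the finitely generated extension of $K$ obtained by adjoining to $K$ the (finitely many) coefficients of the numerators and denominators. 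Then each $V^{(j)}_{\C(x)}$ descends to a $K'(x)$-subspace $V^{(j)}_{K'(x)}\subset M^{(j)}_{K'(x)}$ with $V^{(j)}_{K'(x)}\otimes_{K'(x)}\C(x)=V^{(j)}_{\C(x)}$. Because $\Sgq$ is already defined over $K'(x)\supset K(x)$, stability of $V^{(j)}_{\C(x)}$ under $\Sgq\otimes\sgq$ forces stability of $V^{(j)}_{K'(x)}$ under $\Sgq$ by faithful flatness of $\C(x)/K'(x)$; thus each $\cV^{(j)}_{K'(x)}$ is a $q$-difference submodule of a construction of $\cM_{K'(x)}$.

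By definition, $Gal(\cM_{K'(x)},\eta_{K'(x)})$ stabilises every such $\cV^{(j)}_{K'(x)}$, so that after extending scalars
\[
Gal(\cM_{K'(x)},\eta_{K'(x)})\otimes_{K'(x)}\C(x)\subset Stab\bigl(V^{(j)}_{\C(x)},j\bigr)=Gal(\cM_{\C(x)},\eta_{\C(x)}).
\]
Combined with the first part of the proposition, applied now to the base field $K'$ in place of $K$, this gives the desired isomorphism, and taking $K''=K'$ (or any finitely generated extension of it) yields the second assertion.

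The only genuine point to verify carefully is the descent step: namely, that the stabiliser-defining line (or finite collection of subspaces) produced by Chevalley's theorem over $\C(x)$ actually has defining data in a finitely generated subfield of $\C(x)$ over $K(x)$, and that $\Sgq$-stability descends. This is where I expect the main, though essentially routine, obstacle to lie; all the rest is a direct application of Remark \ref{rmk:noetherianity} and faithful flatness of field extensions.
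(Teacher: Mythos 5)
Your proof is correct and is precisely the noetherianity argument that the paper invokes without spelling out ("A noetherianity argument, that we have already used several times, shows the following"). Both directions — stabilizers of the finite family of $q$-difference submodules over $K(x)$ base-change to a group containing $Gal(\cM_{\C(x)},\eta_{\C(x)})$, and the finitely many submodules cutting out the complex generic Galois group descend to a finitely generated extension $K'$ giving the reverse inclusion — match the intended reasoning.
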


Choosing $K$ large enough, we can assume that $K=K^\p$, which we will do implicitly
in the following informal statement.
We can deduce
from Theorem \ref{thm:car0modulesgenGalois}:

\begin{thm}\label{thm:complexmodulesgenGalois}
The generic Galois group $Gal(\cM_{\C(x)},\eta_{\C(x)})$ is the smallest
algebraic subgroup of $\GL(M_{\C(x)})$ that
contains a nonempty cofinite set of curvatures of the $q$-difference module $\cM_{K(x)}$.
\end{thm}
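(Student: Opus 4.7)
The plan is to reduce to the finitely generated case via Proposition \ref{prop:finitegenextension}, then replay the proof of Theorem \ref{thm:genGalois} with Theorem \ref{thm:complexmodules} playing the role of Theorem \ref{thm:GrothKaz}. First, I would enlarge $K$ so that both $\cM_{\C(x)} = \cM_{K(x)} \otimes_{K(x)} \C(x)$ and $Gal(\cM_{\C(x)},\eta_{\C(x)}) = Gal(\cM_{K(x)},\eta_{K(x)}) \otimes_{K(x)} \C(x)$ hold, as afforded by Proposition \ref{prop:finitegenextension}. I would then fix a $\Sgq$-stable $\cA$-lattice $M$ of $M_{K(x)}$ as in the earlier sections, so that the curvatures $\Sgq^{\kappa_v}$ modulo $\phi_v$ are well-defined on $M \otimes_\cA \cA/(\phi_v)$; base-changing via $\cA \otimes_K \C$ extends these operators to the $\C$-lattice, giving the natural reading of Definition \ref{defn:contains} for algebraic subgroups of $\GL(M_{\C(x)})$.

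One inclusion is essentially immediate. By Theorem \ref{thm:car0modulesgenGalois}, $G_K := Gal(\cM_{K(x)},\eta_{K(x)})$ is the smallest algebraic subgroup of $\GL(M_{K(x)})$ containing the curvatures modulo $\phi_v$. Since the stabilizer property of its defining line is preserved under base change, $G_K \otimes_{K(x)} \C(x)$ is an algebraic subgroup of $\GL(M_{\C(x)})$ containing the base-changed curvatures. Denoting by $H$ the smallest such subgroup over $\C(x)$, we obtain $H \subset G_K \otimes \C(x) = Gal(\cM_{\C(x)},\eta_{\C(x)})$.

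For the reverse inclusion, I would mimic the final step in the proof of Theorem \ref{thm:genGalois}. By Chevalley's theorem, $H = Stab(L_\C)$ for some line $L_\C$ contained in a construction $\cW_\C$ of $\cM_{\C(x)}$; as in Remark \ref{rmk:noetherianity}, one may arrange that $L_\C$ is pointwise fixed by $H$ inside $\cW_\C \otimes \cW_\C^*$. The hypothesis that $H$ contains the curvatures then translates to $\Sgq^{\kappa_v}$ acting as the identity modulo $\phi_v$ on the $(\cA \otimes_K \C)$-lattice generated by $L_\C$, for almost all $v \in \cC$. Theorem \ref{thm:complexmodules} guarantees that the minimal $\Sgq$-stable submodule $\cW'_\C \subset \cW_\C \otimes \cW_\C^*$ containing $L_\C$ becomes trivial over $\C(x)$. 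The induced functorial surjection from $Gal(\cM_{\C(x)},\eta_{\C(x)})$ onto $Gal(\cW'_\C,\eta_{\C(x)}) = \{1\}$ shows that $Gal(\cM_{\C(x)},\eta_{\C(x)})$ acts as the identity on $L_\C$, whence $Gal(\cM_{\C(x)},\eta_{\C(x)}) \subset Stab(L_\C) = H$.

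The main obstacle will be the bookkeeping required to confirm that Definition \ref{defn:contains} and the stabilizer-by-a-line formalism of Remark \ref{rmk:noetherianity} extend cleanly to the base-changed setting, and that the curvature operators acting on $(M \otimes_K \C)/(\phi_v)$ preserve exactly the structural features exploited in the $K(x)$-version of the argument. Once this is in place, the rest of the proof is a line-by-line transcription from the $K(x)$ case, precisely because Theorem \ref{thm:complexmodules} has already packaged the descent from $\C(x)$ down to $K(x)$ that would otherwise be needed to invoke Theorem \ref{thm:GrothKaz}.
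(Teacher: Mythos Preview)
Your proposal is correct and matches the paper's intended argument, which it states only as ``we can deduce from Theorem~\ref{thm:car0modulesgenGalois}'' after enlarging $K$ via Proposition~\ref{prop:finitegenextension}. You have simply unpacked that deduction by rerunning the Katz-style argument of Theorem~\ref{thm:genGalois} over $\C(x)$, with Theorem~\ref{thm:complexmodules} in place of Theorem~\ref{thm:GrothKaz}.

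One small remark: there is a marginally shorter route for the reverse inclusion that avoids redoing the cyclic-vector / $W^*\otimes W$ manoeuvre over $\C(x)$. Given $H\subset\GL(M_{\C(x)})$ containing almost all curvatures, the defining line $L_\C$ of $H$ descends to some finitely generated $K''\supset K$, so $H=H_{K''}\otimes_{K''(x)}\C(x)$ with $H_{K''}$ still containing the curvatures. Theorem~\ref{thm:car0modulesgenGalois} over $K''$ then gives $Gal(\cM_{K''(x)},\eta_{K''(x)})\subset H_{K''}$, and since enlarging $K$ to $K''$ preserves the equality of Proposition~\ref{prop:finitegenextension} (the Galois group can only shrink under extension, and it already equals the $\C(x)$-group after base change), one obtains $Gal(\cM_{\C(x)},\eta_{\C(x)})\subset H$ directly. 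This is presumably what the paper has in mind and sidesteps the ``bookkeeping'' you flag about making Definition~\ref{defn:contains} work over $\cA\otimes_K\C$; but your version is equally valid, and the bookkeeping is indeed routine.
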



\begin{thebibliography}{DVRSZ03}

\bibitem[And89]{AGfunctions}
Y.~Andr{\'e}.
\newblock {\em ${G}$-functions and geometry}.
\newblock Friedr. Vieweg \& Sohn, Braunschweig, 1989.

\bibitem[And01]{andreens}
Y.~Andr{\'e}.
\newblock Diff\'erentielles non commutatives et th\'eorie de {G}alois
  diff\'erentielle ou aux diff\'erences.
\newblock {\em Annales Scientifiques de l'\'Ecole Normale Sup\'erieure.
  Quatri\`eme S\'erie}, 34(5):685--739, 2001.

\bibitem[And04]{Andregrothendieck}
Y.~Andr{\'e}.
\newblock Sur la conjecture des {$p$}-courbures de {G}rothendieck-{K}atz et un
  probl\`eme de {D}work.
\newblock In {\em Geometric aspects of {D}work theory. {V}ol. {I}, {II}}, pages
  55--112. Walter de Gruyter GmbH \& Co. KG, Berlin, 2004.

\bibitem[Aut01]{autissier}
P.~Autissier.
\newblock Points entiers sur les surfaces arithm\'etiques.
\newblock {\em Journal f\"ur die Reine und Angewandte Mathematik},
  531:201--235, 2001.

\bibitem[CR08]{casaleroques}
G.~Casale and J.~Roques.
\newblock Dynamics of rational symplectic mappings and difference {G}alois
  theory.
\newblock {\em International Mathematics Research Notices. IMRN}, pages Art. ID
  rnn 103, 23, 2008.

\bibitem[DG70]{demazuregabriel}
M.~Demazure and P.~Gabriel.
\newblock {\em Groupes alg\'ebriques. {T}ome {I}: {G}\'eom\'etrie alg\'ebrique,
  g\'en\'eralit\'es, groupes commutatifs}.
\newblock Masson \& Cie, \'Editeur, Paris, 1970.
\newblock Avec un appendice {{\it {C}orps de classes local} par Michiel
  Hazewinkel}.

\bibitem[DV02]{DVInv}
L.~Di~Vizio.
\newblock Arithmetic theory of {$q$}-difference equations. {T}he {$q$}-analogue
  of {G}rothendieck-{K}atz's conjecture on {$p$}-curvatures.
\newblock {\em Inventiones Mathematicae}, 150(3):517--578, 2002.
\newblock arXiv:math.NT/0104178.

\bibitem[DVH10a]{diviziohardouin}
L.~Di~Vizio and C.~Hardouin.
\newblock {Algebraic and differential generic {G}alois groups for
  $q$-difference equations, followed by the appendix "{T}he Galois
  {$D$}-groupoid of a $q$-difference system" by Anne Granier}.
\newblock Arxiv:1002.4839v4, unpublished, {\noopsort{a}}2010.

\bibitem[DVH10b]{diviziohardouinCRAS}
L.~Di~Vizio and C.~Hardouin.
\newblock Courbures, groupes de {G}alois g{\'e}n{\'e}riques et
  {$D$}-groupo{\"\i}de de {G}alois d'un syst{\`e}me aux {$q$}-diff{\'e}rences.
\newblock {\em Comptes Rendus Mathematique}, 348(17--18):951--954,
  {\noopsort{b}}2010.

\bibitem[DVH11a]{diviziohardouinqMalg}
L.~Di~Vizio and C.~Hardouin.
\newblock {Parameterized generic {G}alois groups for $q$-difference equations,
  followed by the appendix "The Galois {$D$}-groupoid of a $q$-difference
  system" by {A}nne {G}ranier}.
\newblock {\noopsort{b}}2011.

\bibitem[DVH11b]{diviziohardouinComp}
L.~Di~Vizio and C.~Hardouin.
\newblock {Galois theories for $q$-difference equations: comparison theorems},
  {\noopsort{c}}2011.

\bibitem[DVH11c]{diviziohardouinPacific}
L.~Di~Vizio and C.~Hardouin.
\newblock Descent for differential {G}alois theory of difference equations.
  {C}onfluence and $q$-dependency.
\newblock To appear in Pacific Journal of Mathematics. Arxiv:1002.4839v4,
  {\noopsort{d}}2011.

\bibitem[DVRSZ03]{gazette}
L.~Di~Vizio, J.-P. Ramis, J.~Sauloy, and C.~Zhang.
\newblock \'{E}quations aux {$q$}-diff\'erences.
\newblock {\em Gazette des Math\'ematiciens}, 96:20--49, 2003.

\bibitem[FRL06]{FavreLetelier}
C.~Favre and J.~Rivera-Letelier.
\newblock \'{E}quidistribution quantitative des points de petite hauteur sur la
  droite projective.
\newblock {\em Mathematische Annalen}, 335(2):311--361, 2006.

\bibitem[Gra10]{graniercras}
A.~Granier.
\newblock Un {$D$}-groupo\"\i de de {G}alois local pour les syst\`emes aux
  {$q$}-diff\'erences fuchsiens.
\newblock {\em Comptes Rendus Math\'ematique. Acad\'emie des Sciences. Paris},
  348(5-6):263--265, 2010.

\bibitem[Har10]{HardouinIterative}
C.~Hardouin.
\newblock Iterative {$q$}-difference {G}alois theory.
\newblock {\em Journal f\"ur die Reine und Angewandte Mathematik. [Crelle's
  Journal]}, 644:101--144, 2010.

\bibitem[Hen96]{Hendrikstesi}
P.~Hendriks.
\newblock {\em {A}lgebraic {A}spects of {L}inear {D}ifferential and
  {D}ifference {E}quations}.
\newblock PhD thesis, University of Groningen., 1996.

\bibitem[Hru02]{Hrushcomp}
E.~Hrushovski.
\newblock Computing the {G}alois group of a linear differential equation.
\newblock In {\em Differential {G}alois theory ({B}edlewo, 2001)}, volume~58 of
  {\em Banach Center Publ.}, pages 97--138. Polish Acad. Sci., Warsaw, 2002.

\bibitem[HS08]{HardouinSinger}
C.~Hardouin and M.F. Singer.
\newblock Differential {G}alois theory of linear difference equations.
\newblock {\em Mathematische Annalen}, 342(2):333--377, 2008.

\bibitem[Kat70]{KatzTurrittin}
N.~M. Katz.
\newblock Nilpotent connections and the monodromy theorem: {A}pplications of a
  result of {T}urrittin.
\newblock {\em Institut des Hautes {\'E}tudes Scientifiques. Publications
  Math{\'e}matiques}, 39:175--232, 1970.

\bibitem[Kat82]{Katzbull}
N.~M. Katz.
\newblock A conjecture in the arithmetic theory of differential equations.
\newblock {\em Bulletin de la Soci\'et\'e Math\'ematique de France},
  110(2):203--239, 1982.

\bibitem[Lan83]{LangRootsof1}
S.~Lang.
\newblock {\em Fundamentals of {D}iophantine geometry}.
\newblock Springer-Verlag, New York, 1983.

\bibitem[MvdP03]{MatzatPutCrelle}
B.~Heinrich Matzat and Marius van~der Put.
\newblock Iterative differential equations and the {A}bhyankar conjecture.
\newblock {\em Journal f{\"{u}}r die Reine und Angewandte Mathematik},
  557:1--52, 2003.

\bibitem[Pra83]{Praag}
C.~Praagman.
\newblock The formal classification of linear difference operators.
\newblock {\em Koninklijke Nederlandse Akademie van Wetenschappen. Indagationes
  Mathematicae}, 45(2):249--261, 1983.

\bibitem[Sau00]{Sfourier}
J.~Sauloy.
\newblock Syst\`emes aux $q$-diff\'erences singuliers r\'eguliers:
  classification, matrice de connexion et monodromie.
\newblock {\em Annales de l'Institut Fourier}, 50(4):1021--1071, 2000.

\bibitem[Sau04a]{SauloyENS}
J.~Sauloy.
\newblock Galois theory of {F}uchsian {$q$}-difference equations.
\newblock {\em Annales Scientifiques de l'\'Ecole Normale Sup\'erieure.
  Quatri\`eme S\'erie}, 36(6):925--968, 2004.

\bibitem[Sau04b]{sauloyfiltration}
J.~Sauloy.
\newblock La filtration canonique par les pentes d'un module aux
  {$q$}-diff\'erences et le gradu\'e associ\'e.
\newblock {\em Annales de l'Institut Fourier}, 54(1):181--210, 2004.

\bibitem[vdPR07]{vdPutReversatToulouse}
Marius van~der Put and Marc Reversat.
\newblock Galois theory of {$q$}-difference equations.
\newblock {\em Annales de la Facult\'e des Sciences de Toulouse.
  Math\'ematiques. S\'erie 6}, 16(3):665--718, 2007.

\bibitem[vdPS97]{vdPutSingerDifference}
M.~van~der Put and M.F. Singer.
\newblock {\em Galois theory of difference equations}.
\newblock Springer-Verlag, Berlin, 1997.

\bibitem[Zdu97]{zdunik}
A.~Zdunik.
\newblock Harmonic measure on the {J}ulia set for polynomial-like maps.
\newblock {\em Inventiones Mathematicae}, 128(2):303--327, 1997.

\end{thebibliography}

\newcommand{\noopsort}[1]{}

\end{document}